\newcounter{count}
\newtheorem{dfn}[count]{Definition}
\newtheorem{thm}[count]{Theorem}
\newtheorem{rmk}[count]{Remark}
\newtheorem{lem}[count]{Lemma}
\newtheorem{prop}[count]{Proposition}
\newtheorem{cor}[count]{Corollary}
\newcommand{\1}{\bm{1}}
\newcommand{\ii}{\mathbf{i}}
\newcommand{\BL}{\mathrm{BL}}
\newcommand{\PP}{\mathbb{P}}
\newcommand{\EE}{\mathbb{E}}
\newcommand{\RR}{\mathbb{R}}
\newcommand{\NN}{\mathbb{N}}
\newcommand{\dd}{\mathrm{d}}
\newcommand{\authora}[1]{\gdef\authora{#1}}
\newcommand{\addressa}[1]{\gdef\addressa{#1}}
\newcommand{\emaila}[1]{\gdef\emaila{#1}}
\newcommand{\authorb}[1]{\gdef\authorb{#1}}
\newcommand{\addressb}[1]{\gdef\addressb{#1}}
\newcommand{\emailb}[1]{\gdef\emailb{#1}}
\newcommand{\@endstuff}{
  \bigskip\small\scshape
  \noindent\authora\\
  \addressa\\
  E-mail address: \textnormal{\href{mailto:\emaila}{\texttt{\emaila}}}\\
  \medskip\\
  \authorb\\
  \addressb\\
  E-mail address: \textnormal{\href{mailto:\emailb}{\texttt{\emailb}}}}
\title{Quantitative mean-field limit for interacting  branching  diffusions}
\author{\authora \and \authorb}
\date{\today}
\begin{document}

\maketitle

\begin{abstract}
  We establish an explicit rate of convergence for some systems of mean-field interacting diffusions with logistic binary  branching towards the solutions of nonlinear evolution equations with non-local  self-diffusion and logistic mass growth, shown to describe their large population limits in \cite{FM2015}. The proof relies on a novel coupling argument for binary branching diffusions based on optimal transport, which allows us to sharply mimic the trajectory of the interacting binary branching population by certain system of independent particles with suitably distributed random space-time births. We are thus able to derive an optimal convergence rate, in the dual bounded-Lipschitz distance on finite measures, for the empirical measure of the population, from the convergence rate in  2-Wasserstein distance  of empirical distributions of i.i.d. samples. Our approach  and results extend techniques and ideas on propagation of chaos from kinetic models to stochastic systems of interacting branching populations, and appear to be new in this setting, even in the simple case of pure binary branching diffusions. 

  \bigskip
  \noindent\textbf{2020 Mathematics Subject Classification:} \textit{92D25, 60J85, 60H30, 35Q92.}\\
  \noindent\textbf{Key words and phrases:} \textit{Branching diffusions, population dynamics,  mean-field limit, rate of convergence, optimal transport.}
\end{abstract}

\section{Introduction and main result}

Mathematical models of interacting and randomly evolving populations have been intensively studied the last decades through probabilistic and analytic approaches. Both points of view can integrate several biologically or ecologically meaningful features including individuals' displacements, reproduction and deaths,   competition for resources, selection,  and dispersive or attractive interactions.  While PDE and analysis methods can provide  aggregate  deterministic descriptions of the collective or macroscopic behavior of  such populations (see \cite{SKT1979,CHS2018,CCH2014,DEF2018,FF2013} and \cite{CDJ2018}, to name but a few works),  probabilistic methods have  successfully been employed  to describe the random behaviors  and interactions  of individuals at the microscopic (or finite population) level. Moreover, probabilistic weak convergence tools can be used to justify, in a rigorous way,  how certain  nonlinear evolution PDEs  arise as  limits  in law of the empirical processes of  individual-based models, when the population size  goes to infinity  (see for example \cite{FM2004,BM2015,FM2015,CDJ2019} and \cite{CDHJ2020}). Nevertheless, although it is clear that certain law of large numbers for exchangeable random systems underlies the passage from the microscopic to the macroscopic scale in branching population models, the speed of this convergence is not explicitly known, even in the simple case of pure binary  branching diffusions.  

In this work, we  develop a probabilistic  approach to obtain quantitative convergence estimates for the large population limit of a  general class of  spatially branching  diffusions with logistic growth and mean-field interactive spatial dynamics.
The population and its evolution are described by a right-continuous measure-valued Markov process taking values for fixed $K\in \NN\setminus \{0\}$ in the space of weighted finite point measures over $\mathbb{R}^d$
\[
  {\mathcal{M}}^K(\RR^d) \coloneqq \left\{ \frac{1}{K} \sum_{n=1}^N \delta_{x^n} : x^n\in \mathbb{R}^d,N\in \NN\setminus\{0\} \right\}\subseteq\mathcal{M}^+(\RR^d).
\]
Here, $\mathcal{M}^+(\RR^d)$ stands for the space of finite nonnegative measures on $\RR^d$ endowed with the weak topology and $\delta_x$ is the Dirac mass at $x\in \RR^d$. We denote said process by 
\begin{equation*}%\label{eq:empmeasure}
  \mu^{K}_t = \frac{1}{K}\sum_{n=1}^{N^{K}_t} \delta_{X^{n,K}_t}, \quad t\geq 0, 
\end{equation*}
where $N^{K}_t \coloneqq K  \langle \mu_t^K, 1\rangle \in\mathbb{N}$ is the number of living individuals at time $t\geq 0$ and $X^{1,K}_t,\dots ,X^{\mbox{\scalebox{0.7}{$N^K_t$}}, K}_t$ are  their  positions in $\mathbb{R}^d$.
The parameter $K$ measures the population size and can be interpreted as  the carrying capacity of the underlying environment (see \cite{BM2015}).

The dynamics of  $(\mu^K_t)_{t\geq 0}$ is summarized as follows:
\begin{itemize}
  \item The initial population is described  by a random measure $\mu_0^K \in \mathcal{M}^K(\RR^d)$.
  \item Each  living individual  carries  at each instant  $t>0$ two clocks independent  between them: one reproduction clock, exponential of parameter $r>0$ and independent of  everything else in the system,  and one mortality clock, conditionally exponential of parameter $c N_t^K\hspace*{-0.4ex}/\hspace*{-0.1ex}K$, for $c>0$, given the population size $N_t^K$. If the reproduction clock of a particle rings at time $t$ when at position $x$, it gives birth to a new particle at that same position. If the mortality clock rings the particle disappears. Equivalently, the process jumps from  $\mu^K_{t-}$ to $ \mu^K_t = \mu^K_{t-} +K^{-1}\delta_x $ in the first case and to $ \mu^K_t=\mu^K_{t-} -K^{-1}\delta_x $  in the second. 
  \item Between birth or death events, for  each $n=1,..., N_t^K$ the   individual $X^{n,K}_t$ evolves according to the diffusion process
    \begin{align*}%\label{eq:interactingdiff}
      \dd X_t^{n,K}= b\bigl(X_t^{n,K},H*\mu_t^{K}(X_t^{n,K})\bigr)\,\dd t +  \sigma\bigl(X_t^{n,K},G*\mu_t^{K}(X_t^{n,K})\bigr)\,\dd B_t^{n}, 
    \end{align*}
    where  $(B^n)_{n\geq 1} $ are  Brownian motions in $\RR^d$, independent between them and independent of  $\mu_0^K $ and of the birth and death clocks.   
    \end{itemize} 

This model is a subclass of the non-local Lotka-Volterra cross-diffusion systems introduced in \cite{FM2015} as  a microscopic, individual-based  counterpart of the  celebrated Shigesada-Kawasaki-Tera\-moto  cross-diffusion  system  \cite{SKT1979}.  Here, we consider a simplified setting, consisting in one single species with self-interaction at the individuals' displacements level,  and  we assume that the demographic parameters determining  births and deaths are spatially homogeneous. In particular, the competitive pressure exerted on each individual is global and proportional to the population size, which  corresponds to a  constant competition kernel  in the general model of \cite{FM2015}.  

Following  \cite{FM2015} one can prove that, when $K$ goes to infinity, for each $T>0$ the empirical measure process $(\mu_t^K)_{t\in[0,T]}$ converges in law (in the Skorokhod space of finite measure-valued paths on $ [0,T]$) to a deterministic continuous measure-valued function $(\mu_t)_{t\in[0,T]}$, which  is the unique weak solution of a non-local self-diffusion equation (see \eqref{crossdiffeq} below). The following are assumptions that ensure this convergence and  which will be required to establish  our main result. 

\medskip

\noindent\textbf{Hypothesis (H)}:
\hypertarget{h}{}
 
\begin{enumerate}
  \item[\hypertarget{h1}{H.1.}] $( \langle \mu_0^K, 1\rangle)_{K\in \NN\setminus \{0\}}$ converges  in law   as $K\to \infty$ to some deterministic value  in $(0,\infty)$.   Moreover, for each  $K\in\NN\setminus\{0\}$, conditionally  on $ \langle \mu_0^K, 1\rangle$ the $N_0^K = K  \langle \mu_0^K, 1\rangle $ atoms of $\mu_0^K$ are   i.i.d. random variables with common probability law $\bar{\mu}_0$ not depending on $K$.   
  \item[\hypertarget{h2}{H.2.}] The functions  $\sigma\colon\RR^d\times\RR_+\to \RR^{d\otimes d} $ and $b\colon\RR^d\times\RR_+\to\RR^d$  are Lipschitz continuous. Moreover, there exists $C_\sigma>0$ such that for each $x\in\RR^d$ and $v\in\RR_+$,
    \[ |\sigma(x,v)|\leq C_\sigma(1+|v|).\]
  \item[\hypertarget{h3}{H.3.}] The functions $G,H\colon \RR^d\to \RR_+$ are bounded and Lipschitz continuous.
\end{enumerate}
Under assumption (\hyperlink{h}{H}),  $(\mu^K_t)_{t\geq 0}$ is a  Markov process which has finitely many jumps  in each finite time interval and whose law is uniquely determined. See \cite{FM2015} for details  and  \cite{Daw1993}  for general background on measure-valued Markov processes.  

Let $a \coloneqq \sigma \sigma^\mathrm{t} $ and, given $\mu \in \mathcal{M}^+(\RR^d)$, define an operator acting on  $C^2(\RR^d)$ functions  $\phi$ by
\begin{align*}
  L_{\mu}\phi (x)=\frac{1}{2}\mathrm{Tr}\left(a(x,G*\mu(x))\mathrm{Hess}(\phi)(x)\right)+b(x,H*\mu(x))\cdot\nabla \phi(x).
\end{align*}
As a particular case of \cite[Theorem 3.1]{FM2015}, we have the following statement.
\begin{thm}\label{thm:largepopulationlimit} 
  Assume \textnormal{(\hyperlink{h}{H})} and that \scalebox{1}{$\sup_{K\in \NN\setminus \{0\}}\EE(\langle\mu_0^{K},1\rangle^p)<+\infty$} for some $p\geq3$. Define $\mu_0\in \mathcal{M}^+(\RR^d)$ as the limit in law \scalebox{1}{$\mu_0 \coloneqq \lim_{K\to \infty }  \langle \mu_0^K, 1\rangle  \bar{\mu}_0$}. The sequence of processes $(\mu^{K})_{K\in \NN\setminus \{0\}}$ converges in law in \linebreak $D([0,T],\mathcal{M}^+(\RR^d))$ as $K\to\infty$ to the unique (deterministic) continuous finite measure-valued function $(\mu_t)_{t\in[0,T]}$ solution of 
 \begin{equation}\label{crossdiffeq}
    \left\langle\mu_t,f(t,\cdot)\right\rangle = \left\langle\mu_0,f(0,\cdot)\right\rangle + \int_0^t\bigl\langle\mu_s,\partial_sf(s,\cdot)+L_{\mu_s}f(s,\cdot)+(r-c\langle\mu_s,1\rangle)f(s,\cdot)\bigr\rangle\,\dd s, \quad \forall t\in[0,T],
 \end{equation}
  for every $f\in C_b^{1,2}([0,T]\times\mathbb{R}^d)$ such that $\sup_{(t,x)\in[0,T]\times\mathbb{R}^d}(1+\vert x\vert)\vert\nabla f(t,x)\vert<\infty$.
\end{thm}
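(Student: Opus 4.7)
The plan is to follow the classical program for mean-field limits of measure-valued Markov processes (Fournier--Méléard, Bansaye--Méléard), adapted to the branching setting of \cite{FM2015}. The strategy has three ingredients: (i) a semimartingale decomposition of $\langle\mu^K_t,f\rangle$ whose drift already matches the right-hand side of \eqref{crossdiffeq}; (ii) tightness of $(\mu^K)_{K}$ in the Skorokhod space $D([0,T],\mathcal{M}^+(\RR^d))$ together with uniform $L^p$ bounds on the total mass; and (iii) uniqueness of \eqref{crossdiffeq}, which forces all accumulation points to coincide with the asserted deterministic limit.

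First, I would apply It\^o's formula to the diffusive part of each particle between jumps and compensate the birth/death events. For $f\in C_b^{1,2}([0,T]\times\RR^d)$ this yields
\[
  \langle\mu_t^K,f(t,\cdot)\rangle = \langle\mu_0^K,f(0,\cdot)\rangle + \int_0^t \bigl\langle\mu_s^K,\partial_s f+L_{\mu_s^K}f+(r-c\langle\mu_s^K,1\rangle)f\bigr\rangle\,\dd s + M_t^K(f),
\]
where $M^K(f)$ is a square-integrable martingale whose predictable quadratic variation is of order $K^{-1}$, combining a diffusive contribution proportional to $\int_0^t\langle\mu_s^K,|\sigma^{\mathrm{t}}\nabla f|^2\rangle\,\dd s$ and a jump contribution controlled by $\int_0^t\langle\mu_s^K,(r+c\langle\mu_s^K,1\rangle)f^2\rangle\,\dd s$. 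The sublinear growth $|\sigma(x,v)|\leq C_\sigma(1+|v|)$ and the boundedness of $G$ keep the diffusive term under control.

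Next, taking $f\equiv 1$ in the decomposition and applying the Burkholder--Davis--Gundy inequality together with Gronwall's lemma, I would prove $\sup_K\EE[\sup_{t\leq T}\langle\mu_t^K,1\rangle^p]<\infty$; the hypothesis $p\geq 3$ is natural here because the quadratic variation at level $p=2$ already involves $\langle\mu^K_\cdot,1\rangle^2$, and an iterative bootstrap demands one extra exponent to close. Armed with these moment bounds, tightness of $(\mu^K)_K$ follows via Roelly's criterion: it suffices to check tightness of $(\langle\mu^K_\cdot,f\rangle)_K$ in $D([0,T],\RR)$ for $f$ in a dense subset of $C_b(\RR^d)$, which is an immediate consequence of the Aldous criterion applied term-by-term to the semimartingale decomposition.

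Finally, any subsequential weak limit $(\mu_t)_{t\in[0,T]}$ has continuous trajectories (since $\mu^K$ has jumps of size $K^{-1}$) and must satisfy \eqref{crossdiffeq}: the martingale $M^K(f)$ vanishes in $L^2$ by Doob's inequality, while the Lipschitz continuity of $b,\sigma,G,H$ ensures continuity of the map $\mu\mapsto\langle\mu,L_\mu f+(r-c\langle\mu,1\rangle)f\rangle$ in the weak topology on sets of measures with uniformly bounded mass, allowing the integrated drift to pass to the limit along the converging subsequence. The main obstacle I anticipate is \emph{uniqueness} of \eqref{crossdiffeq}: because $L_\mu$ depends nonlinearly and non-locally on $\mu$ through the convolutions with $G$ and $H$, I would compare two candidate solutions $(\mu_t),(\nu_t)$ by plugging in carefully chosen test functions---for instance, solutions of the backward linearized Kolmogorov problem---and closing a Gronwall estimate in the dual bounded-Lipschitz distance on $\mathcal{M}^+(\RR^d)$, exploiting both the Lipschitz regularity of the coefficients and the uniform boundedness of the total masses to control the nonlinear error. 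Uniqueness then promotes subsequential convergence to convergence of the full sequence.
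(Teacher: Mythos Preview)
The paper does not give its own proof of this statement: it is quoted verbatim as ``a particular case of \cite[Theorem 3.1]{FM2015}'' and no argument is supplied here. Your outline is precisely the Fournier--M\'el\'eard program carried out in that reference---semimartingale decomposition of $\langle\mu^K_t,f\rangle$ with a martingale part of quadratic variation $O(K^{-1})$, uniform $L^p$ mass bounds, tightness via Roelly/Aldous, continuity of limit points from the $K^{-1}$ jump size, passage to the limit in the drift by continuity of $\mu\mapsto\langle\mu,L_\mu f+(r-c\langle\mu,1\rangle)f\rangle$, and uniqueness via backward-Kolmogorov test functions plus Gronwall---so there is nothing to compare beyond saying your sketch matches the cited proof.
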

Notice that the total mass  $n_t\coloneqq \langle \mu_t , 1\rangle$ of the measure $\mu_t$ has an autonomous, logistic evolution in $(0,\infty)$: $  \partial_t n_t  = (r-c  n_t) n_t, \, t\geq 0$.

Our main result is the quantification of the convergence to the large population limit in Theorem \ref{thm:largepopulationlimit}.  Recall that the weak topology on the space $\mathcal{M}^+(\mathbb{R}^d)$ can be metrized by means of the dual bounded-Lipschitz norm, which we denote by $\Vert\cdot \Vert_{ {{\BL}^*}}$ (see Section \ref{sec:preliminaires} for details). We have:

\begin{thm}\label{thm:rateofconvergence}
  Assume \textnormal{(\hyperlink{h}{H})}, that $\sup_{K\in \NN\setminus \{0\}}\EE(\langle\mu_0^{K},1\rangle^p)<\infty$ for some $p\geq  4$, and that $\int_{\mathbb{R}^d}\vert x\vert^q\,\mu_0(\dd x) <\infty$ for some  $q>2$. Then, for all $K\in\NN\setminus\{0\}$ and  $T>0$ one has 
  \begin{align*}
    \sup_{t\in[0,T]}\EE\bigl(\bigl\Vert\mu_t^K-\mu_t\bigr\Vert_{{\BL}^*}\bigr)\leq C_T 
    \begin{cases}
      \Bigl(I_4(K)+K^{-\frac{1}{4}}+K^{-\frac{(q-2)}{2q}}\Bigr), & \text{if }d<4\text{ and }q\neq4,\\
      \Bigl(I_4(K) +K^{-\frac{1}{4}}(\log(1+K))^{\frac{1}{2}} +K^{-\frac{(q-2)}{2q}}\Bigr), & \text{if }d=4\text{ and }q\neq 4, \\
      \Bigl(I_4(K)+K^{-\frac{1}{d}}+K^{-\frac{(q-2)}{2q}}\Bigr), & \hspace{-1.72em} \text{if }d>4\text{ and }q\neq \frac{d}{(d-2)},
    \end{cases}
  \end{align*}
 where $ I_4(K)=\EE\bigl(| \langle\mu^K_0,1\rangle-\langle\mu_0,1\rangle|^4\bigr)^{\frac{1}{4}}$ and $C_T>0$ depends on $T, p,q$ and the data of the model.
\end{thm}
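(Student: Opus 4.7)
My plan is to follow a coupling-based propagation-of-chaos strategy, adapted to the branching setting as hinted at in the abstract. On the same probability space as $(\mu_t^K)$, I would construct an auxiliary measure-valued process $(\bar{\mu}_t^K)$ as the empirical measure of a system of \emph{independent} branching diffusions: each initial atom of $\mu_0^K$ seeds an independent lineage whose individuals satisfy the nonlinear McKean-Vlasov SDE
\begin{equation*}
  \dd\bar{X}_t = b\bigl(\bar{X}_t,H*\mu_t(\bar{X}_t)\bigr)\,\dd t + \sigma\bigl(\bar{X}_t,G*\mu_t(\bar{X}_t)\bigr)\,\dd B_t,
\end{equation*}
reproduces at rate $r$, and dies at rate $cn_t$, where $n_t=\langle\mu_t,1\rangle$. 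Since all coefficients and rates depend only on the deterministic limit $(\mu_t)$, the lineages are mutually independent, and the (conditional) first moment measure of $\bar{\mu}_t^K$ is proportional to $\mu_t$. I would couple $\bar{\mu}_t^K$ with $\mu_t^K$ by reusing the Brownian motion $B^n$ of an interacting particle for its matched independent counterpart, by sharing the reproduction Poisson clocks of rate $r$, and by synchronizing the mortality clocks via a basic coupling driven by the maximum rate $c\max(N_s^K/K,n_s)$ with independent thinning.

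The triangle inequality
\begin{equation*}
  \EE\bigl\Vert\mu_t^K-\mu_t\bigr\Vert_{\BL^*}\leq \EE\bigl\Vert\mu_t^K-\bar{\mu}_t^K\bigr\Vert_{\BL^*}+\EE\bigl\Vert\bar{\mu}_t^K-\mu_t\bigr\Vert_{\BL^*}
\end{equation*}
splits the analysis. For the second term I would exploit the independence of the lineages: after flattening the independent branching trees into a sample of i.i.d. ``typical'' individuals of common law $\mu_t/n_t$ and random size comparable to $Kn_t$, the bound $\Vert\cdot\Vert_{\BL^*}\leq W_1\leq W_2$ allows me to invoke the sharp empirical-measure rate in $W_2$ of Fournier and Guillin. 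This produces the three dimensional regimes $K^{-1/4}$, $K^{-1/4}(\log(1+K))^{1/2}$ and $K^{-1/d}$ appearing in the theorem, plus the truncation term $K^{-(q-2)/(2q)}$ arising from the tail hypothesis $\int|x|^q\,\mu_0(\dd x)<\infty$; the $I_4(K)$ term absorbs the fluctuation of the random initial mass $\langle\mu_0^K,1\rangle$ around $\langle\mu_0,1\rangle$.

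For the first term I would run a Gronwall argument on the coupled $\BL^*$-distance. Between jumps, matched particles drift with Lipschitz coefficients, which yields a contribution proportional to $\Vert\mu_s^K-\mu_s\Vert_{\BL^*}$ by Hypothesis \textnormal{(\hyperlink{h}{H})}; shared reproduction clocks produce identical new particles in both systems; disagreements in the death clocks, of aggregate rate $c|N_s^K/K-n_s|$ per particle, produce at most one unmatched particle each. Such mismatches would destroy any naive particle-by-particle pairing, since the two populations have different and fluctuating sizes. The authors' key idea, which I would import here, is to re-couple the two empirical measures after each jump along an \emph{optimal transport plan} minimizing a quadratic cost, so that the contribution of unmatched particles is measured by the $W_2$-distance between $\mu_t^K$ and $\bar{\mu}_t^K$ rather than by a rigid labelling. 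Together with moment estimates on $\langle\mu_t^K,|x|^q\rangle$, closing Gronwall should then return a bound of the same order as the second term.

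I expect the main obstacle to be precisely this re-coupling step: matching two binary branching populations of different, time-varying sizes without losing sharpness requires the optimal transport plan to interact cleanly with the jump dynamics, and one must carefully track the effect of each birth or death disagreement on the plan. Controlling the cumulative error over $[0,T]$, and checking that the moment assumptions $p\geq 4$ and $q>2$ suffice to propagate uniformly in time the integrability needed to invoke the Fournier-Guillin estimate on $\bar{\mu}_t^K$, is where the bulk of the technical work will lie.
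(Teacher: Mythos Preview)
Your overall architecture (coupling, triangle inequality, Fournier--Guillin) matches the paper, but the construction of the auxiliary system differs in a way that creates two real gaps.

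\textbf{First gap: the auxiliary particles are not i.i.d.} You let $\bar{\mu}_t^K$ be a collection of independent branching \emph{trees}, and then claim one can ``flatten the independent branching trees into a sample of i.i.d.\ typical individuals of common law $\mu_t/n_t$''. This is false: the many-to-one formula identifies the \emph{first moment measure} with $\mu_t/n_t$, but particles sharing a common ancestor are correlated, so the atoms of $\bar{\mu}_t^K$ are not an i.i.d.\ sample and Fournier--Guillin does not apply. The paper solves this by a different auxiliary system $\nu_t^K$: a newborn particle in $\nu^K$ is \emph{not} placed at its parent's position, but at a fresh sample from $\bar{\mu}_t$, after which it evolves via the McKean--Vlasov SDE (whose flow preserves $\bar{\mu}_t$). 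This yields atoms that are genuinely i.i.d.\ of law $\bar{\mu}_t$ conditionally on $N_t^K$ (their Proposition establishing condition (C.2)), so Lemma~\ref{lem:boundNW} applies directly.

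\textbf{Second gap: population size mismatch is avoidable.} You give the auxiliary system death rate $cn_t$, which forces $\bar{\mu}^K$ and $\mu^K$ to have different sizes and generates the ``unmatched particles'' you worry about. This is unnecessary: because the model's death rate $cN_t^K/K$ is spatially homogeneous, the paper lets the auxiliary system use the \emph{same} death clocks as $\mu^K$. Both systems then have $N_t^K$ particles at all times, deaths are perfectly synchronized (same index removed in both), and no re-coupling is ever needed after a death. Optimal transport is used in the paper only at \emph{births}, and for a different purpose than you envisage: to sample the newborn's fresh $\bar{\mu}_t$-position jointly with the branching parent's position in $\mu^K$ so that the pair realizes $W_2^2(\bar{\mu}_{t-}^K,\bar{\mu}_t)$. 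That quantity then feeds a Gronwall inequality on $\EE\bigl(\frac{1}{K}\sum_n\|X_t^n-Y_t^n\|^2\bigr)$, closing via triangle inequality back through $W_2^2(\bar{\nu}_s^K,\bar{\mu}_s)$ and Lemma~\ref{lem:boundNW}.
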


The fact that  $ \langle\mu^K_0,1\rangle$ converges at least as fast as $K^{-1/4}$  in $L^4$ to $ \langle\mu_0,1\rangle$ can be granted for large families of random measures satisfying  (\hyperlink{h1}{H.1})  (see Lemma \ref{lem:weakconv_mu0}  in  Section \ref{sec:propchaos}  for details and also for possible relaxations of assumption (\hyperlink{h1}{H.1})). The convergence rate in Theorem \ref{thm:rateofconvergence} thus essentially depends  non-increasingly on the dimension $d$,  and on the amount of finite moments of  the measure $\mu_0$. For modeling purposes, the most relevant setting is $d=3$, in which case the rate  is equivalent  to $K^{-1/4}$ if  $q\in [4,+\infty)$, or to  the slower  rate $K^{-(q-2)/(2q)}$ if $q\in(2,4)$.  We notice also that  the same result  can be obtained in the case that each individual of the population additionally  carries an  independent, autonomous exponential killing clock of a fixed parameter (with the natural modification of the limiting PDE).

To prove Theorem \ref{thm:rateofconvergence} we will extend to the  branching populations setting some probabilistic  coupling techniques, based on optimal transport,  recently developed to quantify propagation of chaos in binary interacting particle systems from kinetic theory \cite{CF2016,CF2018}. See  \cite{Szn1991} and \cite{Mel1996}  for  general background on propagation of chaos theory. 
  
In the next section, we establish some preliminary results and  present the strategy of the proof of Theorem \ref{thm:rateofconvergence},  along with an outline of  the remainder of the paper.   We shall also discuss the ideas underlying our approach and discuss some consequences of our main result,  in the light  of propagation of chaos  theory.

\section{Preliminaries and strategy of the proof}\label{sec:preliminaires}

Denote by  $\text{BL}(\mathbb{R}^d)$ the space of Lipschitz continuous bounded functions in $\mathbb{R}^d$ with the norm
\begin{equation*}
  \Vert\varphi\Vert_\BL=\sup_{x\neq y}\frac{\vert\varphi(x)-\varphi(y)\vert}{x-y}+\sup_x\vert\varphi(x)\vert, 
\end{equation*}
and by  $\Vert\cdot \Vert_{ {{\BL}^*}}$ the corresponding dual norm on the space  $\mathcal{M}(\mathbb{R}^d)$  of  finite signed measures on  $\mathbb{R}^d$. The induced distance
 \begin{equation*}
  \Vert\mu-\nu\Vert_ {{\BL}^*}=\sup_{\Vert\varphi\Vert_\BL\leq1}\vert\langle\mu-\nu,\varphi\rangle\vert, 
\end{equation*}
is well known to generate the weak convergence topology on $\mathcal{M}^+(\mathbb{R}^d)$. The subspace of $\mathcal{M}^+(\mathbb{R}^d)$ of probability measures is denoted by $\mathcal{P}(\mathbb{R}^d)$.
Given a measure $\mu \in \mathcal{M}^+(\mathbb{R}^d)$,  its $q$-th moment for $q\in [1,\infty)$  is denoted by
\[M_q(\mu)=\int_{\mathbb{R}^d}\vert x\vert^q\,\mu(\dd x).\]

For $p\in [1,\infty)$,  the  $p$-Wasserstein distance $W_p(\mu,\nu)$ between two probability measures $\mu, \nu \in \mathcal{P}(\mathbb{R}^d)$  is defined by
\[W_p(\mu,\nu)=\biggl(\inf_{\pi\in\Pi(\mu,\nu)}\int_{\mathbb{R}^d\times \mathbb{R}^d}\left\vert x-y\right\vert^p\,\pi(\dd x,\dd y)\biggr)^\frac{1}{p},\]
where $\Pi(\mu,\nu)$ is the set of probability measures over $\RR^d\times\RR^d$ that have $\mu$ and $\nu$ as first and second marginals respectively. A coupling $\pi \in \Pi(\mu,\nu)$ realizing the infimum always exists and is called an \textit{optimal coupling}  between $\mu$ and $\nu$  for the transport cost  $c(x,y)=  |x-y|^p$. $W_p$ defines a  complete distance if  restricted to the space $\{\mu\in \mathcal{P}(\mathbb{R}^d): M_p(\mu)<\infty\}$  and is equivalent therein to the weak topology strengthened with the convergence of  $p$-th moments. See \cite{Vil2009} for background.
 
For every  $\mu\in\mathcal{M}^+(\mathbb{R}^d)$,  we  will throughout denote by  $\bar{\mu}$ the probability measure  on $\RR^d$ obtained from it by normalization:
\[ \bar{\mu} \coloneqq \frac{1}{\langle \mu,1\rangle} \mu \in {\mathcal{P}}(\RR^d). \]

The following simple relations  for finite measures, proved in \hyperlink{appendix}{Appendix},  will be useful. 
\begin{lem}\label{lem:normwasserstein}
  Let $\mu,\nu\in\mathcal{M}^+(\mathbb{R}^d)$. We have
  \[ \Vert\mu-\nu\Vert_ {{\BL}^*}\leq \langle\mu,1\rangle \Vert\bar{\mu}-\bar{\nu}\Vert_ {{\BL}^*}+\big\vert \langle\mu,1\rangle -\langle\nu,1\rangle \big\vert, \]
  and
  \[ \Vert\bar{\mu}-\bar{\nu}\Vert_ {{\BL}^*}\leq \inf_{\pi\in\Pi(\bar{\mu},\bar{\nu})}\int \vert x-y\vert\wedge2\,\pi(\dd x,\dd y)\leq W_1(\bar{\mu},\bar{\nu}). \]
\end{lem}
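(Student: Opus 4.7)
The lemma comprises two essentially elementary estimates, so the plan is just to expose the splittings and coupling arguments that make each inequality transparent.

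For the first inequality, I would exploit the trivial identity $\mu=\langle\mu,1\rangle\bar{\mu}$ (and similarly for $\nu$) to write the additive-multiplicative decomposition
\[
  \mu-\nu = \langle\mu,1\rangle(\bar{\mu}-\bar{\nu}) + (\langle\mu,1\rangle-\langle\nu,1\rangle)\bar{\nu}.
\]
Testing against an arbitrary $\varphi$ with $\Vert\varphi\Vert_\BL\leq 1$, the first term contributes at most $\langle\mu,1\rangle\Vert\bar{\mu}-\bar{\nu}\Vert_{{\BL}^*}$, while the second contributes at most $|\langle\mu,1\rangle-\langle\nu,1\rangle|$ because $\Vert\varphi\Vert_\infty\leq\Vert\varphi\Vert_\BL\leq 1$ and $\bar{\nu}$ has total mass one. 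Taking the supremum over $\varphi$ yields the claim.

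For the second inequality, I would use that any $\varphi$ with $\Vert\varphi\Vert_\BL\leq 1$ satisfies simultaneously $|\varphi(x)-\varphi(y)|\leq |x-y|$ (from the Lipschitz seminorm bound) and $|\varphi(x)-\varphi(y)|\leq 2\Vert\varphi\Vert_\infty\leq 2$, hence $|\varphi(x)-\varphi(y)|\leq |x-y|\wedge 2$. For any coupling $\pi\in\Pi(\bar{\mu},\bar{\nu})$ we then get
\[
  |\langle\bar{\mu}-\bar{\nu},\varphi\rangle| = \Bigl|\int (\varphi(x)-\varphi(y))\,\pi(\dd x,\dd y)\Bigr| \leq \int |x-y|\wedge 2\,\pi(\dd x,\dd y).
\]
Taking the infimum over $\pi$ and the supremum over $\varphi$ gives the first half. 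The second half, $\inf_\pi\int |x-y|\wedge 2\,\dd\pi\leq W_1(\bar{\mu},\bar{\nu})$, is immediate since $|x-y|\wedge 2\leq |x-y|$ pointwise.

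There is no real obstacle here; the only point requiring a word of care is the definition of $\Vert\cdot\Vert_\BL$ as a sum of Lipschitz and sup seminorms, which is what guarantees that $\Vert\varphi\Vert_\BL\leq 1$ controls both $\text{Lip}(\varphi)$ and $\Vert\varphi\Vert_\infty$ by $1$, and hence delivers the combined bound $|\varphi(x)-\varphi(y)|\leq |x-y|\wedge 2$ used above. Everything else is a direct linearity/test-function manipulation.
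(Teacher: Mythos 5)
Your proof is correct and follows essentially the same route as the paper's: the identical decomposition $\mu-\nu=\langle\mu,1\rangle(\bar{\mu}-\bar{\nu})+(\langle\mu,1\rangle-\langle\nu,1\rangle)\bar{\nu}$ together with $\Vert\bar{\nu}\Vert_{{\BL}^*}=1$ for the first inequality, and the coupling representation with the pointwise bound $|\varphi(x)-\varphi(y)|\leq|x-y|\wedge 2$ for the second. Nothing is missing.
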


The basic estimate on which our main result relies, is the quantitative bound in $2$-Wasserstein distance for empirical measures of i.i.d. samples, proved in  \cite{FG2015} and stated next for convenience. 

\begin{thm}\label{thm:FournierGuillin}
  Let $\bar{\mu} \in \mathcal{P}(\RR^d)$ and $(X^n)_{n\in\NN}$ be i.i.d. random variables with law $\bar{\mu}$. Assume  $M_q(\bar{\mu}) < \infty$ for some $q > 2$. Then, there exists a constant $C_{d,q}>0$ depending only on $d$ and $q$ such that, for all $N\in\NN\setminus\{0\}$,
  \begin{equation*}
    \EE\biggl( W_2^2\biggl(\frac{1}{N}\sum_{n=1}^N \delta_{X^n},\bar{\mu}\biggr) \biggr) \leq C_{d,q}M_q^\frac{2}{q}(\bar{\mu})\, R_{d,q}(N), 
  \end{equation*}
  where  $R_{d,q}\colon\NN \setminus \{0\}\to \RR_+$ is defined by  
  \begin{equation*}
    R_{d,q}(N) \coloneqq \begin{cases}
      N^{-\frac{1}{2}} + N^{-\frac{(q-2)}{q}}, & \text{if } d<4 \text{ and } q\neq 4, \\
      N^{-\frac{1}{2}}\log(1+N) + N^{-\frac{(q-2)}{q}}, & \text{if } d=4 \text{ and } q\neq 4, \\
      N^{-\frac{2}{d}} + N^{-\frac{(q-2)}{q}}, & \text{if } d>4 \text{ and } q\neq \frac{d}{d-2}.
    \end{cases}
  \end{equation*}
\end{thm}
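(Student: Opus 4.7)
The plan is to follow the classical dyadic multiscale approach, combining a deterministic multiscale coupling bound with elementary variance estimates for cube masses, and handling unbounded support via a shell decomposition calibrated by the $q$-th moment.

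First, treat the case where $\bar\mu$ is supported in the unit cube $[0,1)^d$. Introduce for each $\ell\geq 0$ the partition $\mathcal{P}_\ell$ of $[0,1)^d$ into $2^{d\ell}$ cubes of side $2^{-\ell}$. A multiscale coupling construction (equalize mass at each dyadic scale, then transport arbitrarily within the finest cubes at cost bounded by the diameter) gives, for any probability measure $\nu$ on $[0,1)^d$ and any $L\in\NN$,
\begin{equation*}
  W_2^2(\bar\mu,\nu) \leq C_d\Bigl( 2^{-2L} + \sum_{\ell=1}^L 2^{-2\ell}\sum_{F\in\mathcal{P}_\ell}|\bar\mu(F)-\nu(F)| \Bigr).
\end{equation*}
Applying this with $\nu = \mu_N \coloneqq \frac{1}{N}\sum_{n=1}^N\delta_{X^n}$ and combining the variance bound $\EE|\mu_N(F)-\bar\mu(F)|\leq (\bar\mu(F)/N)^{1/2}$ with Cauchy--Schwarz in the sum over cubes yields $\EE\sum_{F\in\mathcal{P}_\ell}|\mu_N(F)-\bar\mu(F)|\leq 2^{d\ell/2}N^{-1/2}$, hence
\begin{equation*}
  \EE\, W_2^2(\mu_N,\bar\mu) \leq C_d\Bigl(2^{-2L} + N^{-1/2}\sum_{\ell=0}^L 2^{\ell(d-4)/2}\Bigr).
\end{equation*}
The three regimes in the statement now fall out by inspection: for $d<4$ the geometric series is bounded uniformly in $L$ and one lets $L\to\infty$ to get rate $N^{-1/2}$; for $d=4$ the sum equals $L+1$, and $L\sim\log_2 N$ balances the two contributions and yields rate $N^{-1/2}\log(1+N)$; for $d>4$ equating $2^{L(d-4)/2}N^{-1/2}$ with $2^{-2L}$ gives $2^L\sim N^{1/d}$ and rate $N^{-2/d}$.

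Second, for general $\bar\mu$ with $M_q(\bar\mu)<\infty$, decompose $\RR^d$ into annular shells $A_0 = \{|x|<1\}$ and $A_k=\{2^{k-1}\leq|x|<2^k\}$ for $k\geq 1$, so that Markov's inequality yields $\bar\mu(A_k)\leq 2^{-q(k-1)}M_q(\bar\mu)$. Build a coupling between $\bar\mu$ and $\mu_N$ by applying the bounded-support estimate on each shell (rescaled to the unit cube, which picks up a factor $2^{2k}$ in $W_2^2$) and correcting the random shell-mass imbalances $|\mu_N(A_k)-\bar\mu(A_k)|$ via cross-shell transport, whose cost is bounded by a multiple of $2^{2k}$ per unit of displaced mass. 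Both the in-shell fluctuation contributions and the imbalance contributions give rise to sums of the form $\sum_k 2^{2k}\bar\mu(A_k)^{\alpha}$ with $\alpha\in\{1/2,1\}$; plugging in $\bar\mu(A_k)\leq C\,2^{-qk}M_q(\bar\mu)$ and optimizing the cut-off index beyond which shells are lumped together rather than individually resolved produces exactly the additional $M_q^{2/q}(\bar\mu)\,N^{-(q-2)/q}$ term.

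The main technical obstacle is the two constructive ingredients: the deterministic multiscale bound and the bookkeeping that threads the $q$-moment through the annular decomposition, so that the dimension-driven fluctuation rate and the moment-driven tail rate emerge additively with a constant $C_{d,q}$ depending only on $d,q$ and not on $\bar\mu$. The exceptional values $q=4$ (resp.\ $q=d/(d-2)$) excluded from the statement correspond precisely to the balance points between the two rates at which the optimization degenerates and logarithmic corrections would have to be inserted, mirroring the $d=4$ boundary case in the in-shell estimate above.
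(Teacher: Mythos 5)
The paper does not prove this statement: it is quoted verbatim from Fournier and Guillin \cite{FG2015} (their Theorem 1 with $p=2$), so there is no in-paper proof to compare against. Your outline is a faithful reconstruction of the actual argument in that reference -- the deterministic dyadic multiscale transport bound on the unit cube, the binomial variance estimate $\EE|\mu_N(F)-\bar\mu(F)|\leq(\bar\mu(F)/N)^{1/2}$ combined with Cauchy--Schwarz over the $2^{d\ell}$ cubes, the balancing of $2^{-2L}$ against $N^{-1/2}\sum_\ell 2^{\ell(d-4)/2}$ that produces the three dimensional regimes, and the annular decomposition calibrated by $M_q$ that yields the additional $M_q^{2/q}N^{-(q-2)/q}$ term and explains the excluded values $q=4$ and $q=d/(d-2)$. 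The sketch is correct in all its quantitative checkpoints, though as written it is an outline: the two constructive ingredients you flag (the multiscale coupling lemma and the cross-shell bookkeeping) are precisely the parts that require full detail, and for them one should simply follow \cite{FG2015}.
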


One can deduce analogous estimates for random empirical measures in $ \mathcal{M}^K(\RR^d)$ whose atoms satisfy a certain conditionally independence property. See \hyperlink{appendix}{Appendix} for the proof of the next result.

\begin{lem}\label{lem:boundNW}
  Let  $\mu \in \mathcal{M}^+(\RR^d)$ be such that  $M_q(\mu)<\infty$ for some $q>2$ and  let $(N,\nu^K)$ be a random variable in $\NN\times \mathcal{M}^K(\RR^d)$  such that   $\EE(N)<\infty$ and, conditionally on $N$,   $\nu^K$   is supported on $N$ atoms that are i.i.d. random variables of law $\bar{\mu} $.  
  Then, there exists a constant $C_{d,q}>0$ that depends only on $d,q$ such that
  \begin{equation*}%\label{eq:boundNKW}
    \EE\Bigl(\frac{N}{K}W_2^2\bigl(\bar{\nu}^K, \bar{\mu} \bigr)\Bigr)\leq C_{d,q} M_q^\frac{2}{q}(\bar{\mu})  \,  \EE(1\vee (N/K )) R_{d,q}(K). 
  \end{equation*} 
\end{lem}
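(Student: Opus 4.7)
\medskip
\noindent\emph{Proof proposal.} The plan is to condition on $N$ and apply Theorem \ref{thm:FournierGuillin} to the conditionally i.i.d. atoms of $\nu^K$, then absorb the discrepancy between $R_{d,q}(N)$ and $R_{d,q}(K)$ into the factor $1\vee (N/K)$ by elementary monotonicity.

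First, on the event $\{N\geq 1\}$ the normalised measure $\bar{\nu}^K=\frac{1}{N}\sum_{n=1}^{N}\delta_{X^n}$ is, conditionally on $N$, the empirical measure of $N$ i.i.d. samples of law $\bar{\mu}$. Since $M_q(\bar{\mu})=M_q(\mu)/\langle \mu,1\rangle<\infty$, Theorem \ref{thm:FournierGuillin} applied conditionally gives
\[
  \EE\bigl(W_2^{2}(\bar{\nu}^K,\bar{\mu})\,\big|\,N\bigr)\leq C_{d,q}\,M_q^{2/q}(\bar{\mu})\,R_{d,q}(N).
\]
On $\{N=0\}$ the left-hand side of the target inequality is set to $0$ by convention, so by the tower property
\[
  \EE\Bigl(\frac{N}{K}W_2^{2}(\bar{\nu}^K,\bar{\mu})\Bigr)\leq C_{d,q}\,M_q^{2/q}(\bar{\mu})\,\EE\Bigl(\frac{N}{K}R_{d,q}(N)\Bigr).
\]

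It remains to check the purely deterministic estimate
\[
  \frac{n}{K}R_{d,q}(n)\leq c_{d,q}\,\Bigl(1\vee \frac{n}{K}\Bigr)R_{d,q}(K), \qquad \forall n\in\NN\setminus\{0\},
\]
for some constant $c_{d,q}$ depending only on $d,q$. This I would establish by splitting into two regimes. When $n\leq K$, I rewrite each term of $\frac{n}{K}R_{d,q}(n)$ as $n^{\alpha}/K$ (times a log in the borderline case $d=4$) with exponent $\alpha\in(0,1]$; each such map $n\mapsto n^{\alpha}$, and also $n\mapsto n^{1/2}\log(1+n)$, is nondecreasing, so each term is bounded above by its value at $n=K$, which is precisely the corresponding term of $R_{d,q}(K)$. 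When $n>K$, I use that $N\mapsto R_{d,q}(N)$ is nonincreasing (in the $d=4$ case only eventually, but then up to a universal multiplicative constant absorbed in $c_{d,q}$), so $R_{d,q}(n)\leq c_{d,q}R_{d,q}(K)$, yielding $\frac{n}{K}R_{d,q}(n)\leq c_{d,q}\frac{n}{K}R_{d,q}(K)$. Combining the two regimes with $1\leq 1\vee(n/K)$ and $n/K\leq 1\vee(n/K)$ respectively gives the claim. Substituting back completes the proof.

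The only mildly delicate point is the borderline $d=4$ case, where the log factor must be handled carefully in both monotonicity arguments; but since $n\mapsto n^{1/2}\log(1+n)$ is strictly increasing on $(0,\infty)$ and $n\mapsto n^{-1/2}\log(1+n)$ is eventually decreasing, the uniform constant $c_{d,q}$ is easily obtained. All other cases reduce to the elementary fact that $n^{\alpha}$ and $n^{-\beta}$ are monotone for $\alpha,\beta>0$.
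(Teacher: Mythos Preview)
Your proof is correct. Both you and the paper begin identically: condition on $N$, apply Theorem~\ref{thm:FournierGuillin} to obtain $\EE\bigl(\frac{N}{K}W_2^2(\bar{\nu}^K,\bar{\mu})\bigr)\leq C_{d,q}M_q^{2/q}(\bar{\mu})\,\EE\bigl(\frac{N}{K}R_{d,q}(N)\bigr)$. The difference lies in how the random quantity $\frac{N}{K}R_{d,q}(N)$ is compared to $R_{d,q}(K)$.

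The paper does not split into cases; instead it writes $\frac{N}{K}N^{-\alpha}=K^{-\alpha}(N/K)^{1-\alpha}$ for each power term, then applies Jensen's inequality to the concave map $x\mapsto x^{1-\alpha}$ to pull the expectation inside, obtaining $(\EE(N/K))^{1-\alpha}$, which is at most $1\vee\EE(N/K)\leq\EE(1\vee N/K)$. In the borderline case $d=4$ the paper handles the logarithmic factor via Cauchy--Schwarz plus the concavity of a $C^1$ extension of $\log^2$, which is somewhat heavier machinery. Your pointwise deterministic bound $\frac{n}{K}R_{d,q}(n)\leq c_{d,q}(1\vee n/K)R_{d,q}(K)$, established by the monotone case-split $n\leq K$ versus $n>K$, is more elementary and treats all dimensions uniformly; in particular it avoids the Cauchy--Schwarz detour for $d=4$. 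The cost is the small side argument that $n\mapsto n^{-1/2}\log(1+n)$, though only eventually decreasing, is bounded above on $\{n>K\}$ by a universal multiple of its value at $K$ (this follows since the function is continuous, positive on $[1,\infty)$, and attains its maximum at some finite $n_0$, so the ratio is controlled by $\max g/\min_{[1,n_0]}g$ when $K<n_0$ and by $1$ when $K\geq n_0$). Either route yields the same final constant dependence.
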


Notice that under assumption (\hyperlink{h1}{H.1}),  Lemma \ref{lem:boundNW} immediately provides quantitative estimates for \linebreak $W_2^2(\bar{\mu}_t^K,\bar{\mu}_t)  $ when $t=0$; however, the required conditional independence property is lost as soon as $t>0$,  even in the case of pure branching diffusions.

\subsection{Proof strategy and plan of the paper}\label{sec:strategy}

The proof of Theorem \ref{thm:rateofconvergence} is based  on the construction, for each $K$, of  a coupling between the system  $(\mu_t^K)_{t\geq 0}$, and an auxiliary system  of particles in $ \mathcal{M}^K(\RR^d)$ denoted by
\[ \nu^{K}_t \coloneqq \frac{1}{K}\sum_{n=1}^{N^{K}_t} \delta_{Y^{n,K}_t}, \quad t\geq 0,\]
such that the following condition holds:

\medskip 

\noindent\textbf{Condition (C)}: 
\hypertarget{c}{}

\begin{enumerate}
  \item[\hypertarget{c1}{C.1.}]  $ \nu_0^K = \mu_0^K  $ and  $K \langle \nu_t^K, 1\rangle =K  \langle \mu_t^K, 1\rangle = N_ t^K$  for all $t\geq 0$ almost surely.
  \item[\hypertarget{c2}{C.2.}] For each $t\geq 0$, conditionally on   $ \langle \nu_t^K, 1\rangle $, the atoms of $ \nu_t^K$   are i.i.d. random variables of law $\bar{\mu}_ t$.
  \item[\hypertarget{c3}{C.3.}] For each $T>0$ there is a constant $C_T>0$ depending on  $T$ and on the data of  Theorem \ref{thm:rateofconvergence} such that
  \[
    \EE\Bigl(\frac{N_t^K}{K}W_2^2\bigl(\bar{\nu}_t^K, \bar{\mu}_t^K \bigr)\Bigr)\leq C_T  \,  (R_{d,q}(K) +I_4^2(K)) .
  \]
\end{enumerate}

Let us describe how this construction is used and  how the arguments of the proof will unfold in the remainder of the paper: 
\begin{itemize}
  \item Thanks to  condition (\hyperlink{c1}{C.1}), Lemma \ref{lem:normwasserstein} and some auxiliary estimates,  obtaining the searched  bound boils down, by triangular inequality, to controlling on finite time intervals the quantities  \\ $\EE\bigl(\frac{N_t^K}{K}W_2^2\bigl(\bar{\nu}_t^K, \bar{\mu}_t^K \bigr)\bigr) $ and $ \EE\bigl(\frac{N_t^K}{K}W_2^2\bigl(\bar{\nu}_t^K, \bar{\mu}_t \bigr)\bigr) $.
  \item  Condition (\hyperlink{c2}{C.2})  and  Lemma  \ref{lem:boundNW}  together imply that the quantity  $ \EE\bigl(\frac{N_t^K}{K}W_2^2\bigl(\bar{\nu}_t^K, \bar{\mu}_t \bigr)\bigr) $   is bounded by $C_T  \,  R_{d,q}(K)$.
  \item  The previous facts and the bound in condition (\hyperlink{c3}{C.3}) together  will imply the bound asserted in Theorem \ref{thm:rateofconvergence}.  
\end{itemize} 

In Section \ref{sec:pathwiseconstruction} we explicitly construct the coupled particle systems, $(\mu_t^K)_{t \geq 0}$ and $(\nu_t^K)_{t \geq 0}$, in terms of common Brownian motions and a suitable Poisson point measure. 
In this construction, condition (\hyperlink{c1}{C.1}) is simply verified  since  the birth and death events of the two systems will be simultaneous, and they both will start from the same state.
In order to ensure condition (\hyperlink{c2}{C.2}), each atom  $Y_t^{n,K}$  of $\nu_t^K$  will be defined as a suitable McKean-Vlasov diffusion (defined in Proposition \ref{prop:nonlinearprocess}), whose law at each time $t $ from its birth-time on is given by $\bar{\mu}_t$, and it will  evolve   independently  of  everything else in the system.
 
The crucial, far from trivial feature of the coupling is condition (\hyperlink{c3}{C.3}). Since
\[
  \EE\Bigl(\frac{N_t^K}{K}W_2^2\bigl(\bar{\nu}_t^K, \bar{\mu}_t^K \bigr)\Bigr)\leq  \EE\biggl(\frac{N_t^K}{K} \frac{1}{N_t^K} \sum_{n=1}^{N_t^K} \| X_t^{n,K}- Y_t^{n,K}\|^2 \biggr) = \EE\biggl(\frac{1}{K} \sum_{n=1}^{N_t^K} \| X_t^{n,K}- Y_t^{n,K}\|^2 \biggr), 
\]
by using the same Brownian motion to drive the  two atoms $(X_t^{n,K},Y_t^{n,K})$  and  relying on the  Lipschitz character of the coefficients,  we will be able to ensure condition (\hyperlink{c3}{C.3}) by coupling the birth positions of the two paired particles in the best possible way, in the $L^2-$distance sense. This is where optimal transport  ideas and techniques introduced  in  \cite{CF2016,CF2018} will come into play.   
Indeed,  on one hand, the birth position of a  new particle in the system $(\nu_t^K)_{t\geq 0 } $,  born at a random time $s$,  will be sampled  in $\RR^d$  according to the law  $\bar{\mu}_s$. On the other, choosing randomly a particle that branches  at time $s$
in  system  $(\mu_t^K)_{t\geq 0} $ is equivalent to sampling a position  in $\RR^d$  at that time,  according to the empirical law  $\bar{\mu}^K_{s-}$.   Thus, the optimal way to couple a pair of atoms  in the two systems at their birth  time $s$ is to sample them simultaneously  from the optimal coupling for $W^2_2$  of the law $\bar{\mu}_s$ and  the (random) law $\bar{\mu}^K_{s-}$. This joint sampling must be done in a measurable way in terms of the state of the process at time $s-$, which requires  using a non-trivial construction  from   \cite{CF2016}, adapted to our setting in Lemma \ref{lem:optimal_coupling}.

In Section \ref{sec:bd} we consider the simpler case of pure binary branching processes  (i.e. with no mean-field interaction between the particles  nor competition). We establish some auxiliary estimates, we prove that condition  (\hyperlink{c3}{C.3}) holds in that specific case, and we deduce Theorem \ref{thm:rateofconvergence}  with slightly better bounds. 
 
In Section \ref{sec:bdi} we follow similar steps to deduce the proof of Theorem \ref{thm:rateofconvergence}  as stated in the general case. 

Finally, in the last section we discuss  potential  extensions of the developed ideas and  results to more general branching population models.

Before delving into the proofs, we briefly discuss the relation of our results with the propagation of chaos property in mean-field interacting particle systems, and we make some remarks concerning assumption (\hyperlink{h1}{H.1}) and related conditions  in that framework. 

\subsection{Propagation of chaos for interacting branching diffusions}\label{sec:propchaos}

It is well known that  convergence of the empirical probability distribution  of   $N$ exchangeable particles to some deterministic probability measure,  when $N$ is a non-random integer that goes to infinity, is  equivalent to the property of propagation of chaos,  or asymptotic independence of the particles \cite{Szn1991,Mel1996}.   We next introduce an extended notion of it,  whereby Theorem  \ref{thm:rateofconvergence} can be viewed  as a propagation of chaos result.  

\begin{dfn}
Let $(N^K)_{K\in\NN \setminus \{0\}}$  be random variables in $\NN$ going in law  to  $\infty$  as $K\to \infty$. We say a family $((Y^{1,K}, \dots, Y^{N^K,K}))_{K\in\NN \setminus \{0\}} $ of random vectors, $(\RR^d)^{N^K}\hspace{-0.2em}$-valued and exchangeable conditionally on $N^K$ for each $K$, is conditionally $P$-\textit{chaotic} given  $(N^K)_{K\in\NN \setminus \{0\}}$ if for some $P\in {\mathcal{P}}(\RR^d)$ and every  $j \in \NN\setminus\{0\}$ the (random) conditional laws  $\bigl( {\cal L}(Y^{1,K}, \dots, Y^{j\wedge N^K,K} \mid N^K) \bigr)_{K\in\NN \setminus \{0\}}$ given $N^K$ and the event $\{N^K\geq j\}$ converge in distribution in $\mathcal{P}((\RR^d)^j)$ to $P^{\otimes j}$  as $K\to \infty$.
\end{dfn}

In the case that $N^K= K $  is deterministic for all $K\in\NN \setminus \{0\}$,  one recovers the well known notion of $P$-chaoticity  \cite{Szn1991,Mel1996}.  Under the same assumptions of Theorem \ref{thm:rateofconvergence} we deduce the following result, proved in Section \ref{sec:bdi}.

\begin{cor}\label{cor:condiquantipropchaos}
  For each $t\geq 0$ the family $((X_t^{1,K},\dots, X_t^{\mbox{\scalebox{0.7}{$N^K_t,K$}}}))_{K\in \NN\setminus \{0\}}$ is conditionally $P$-chaotic  given $(N^K_t)_{K\in \NN\setminus \{0\}}$  with $P=\mu_t / \langle  \mu_t , 1\rangle$.
\end{cor}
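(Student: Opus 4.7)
The plan is to deduce the conditional $P$-chaoticity from the convergence of the empirical measure $\bar{\mu}^K_t$ towards $\bar{\mu}_t$ that is implied by Theorem \ref{thm:rateofconvergence}, via the classical Sznitman-type equivalence between $P$-chaoticity and convergence of empirical measures, suitably adapted to the present setting where the cardinality $N^K_t$ is itself random.

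First, I observe that the total mass $n_t = \langle \mu_t,1\rangle$ solves the logistic ODE $\partial_t n_t = (r-c n_t)n_t$ with $n_0>0$, so $n_t>0$ for all $t\geq 0$. Theorem \ref{thm:rateofconvergence} yields $\mu^K_t \to \mu_t$ in $L^1$ for the $\Vert\cdot\Vert_{\BL^*}$ norm, hence in probability in the weak topology, and in particular $\langle \mu^K_t,1\rangle = N^K_t/K \to n_t>0$ in probability, so $N^K_t\to\infty$ in probability. Combining these facts with Lemma \ref{lem:normwasserstein} (applied to $\mu=\mu^K_t$, $\nu=\mu_t$), I obtain the convergence of normalized measures $\bar{\mu}^K_t \to \bar{\mu}_t$ in probability for the weak topology.

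Next, fix $j\in\NN\setminus\{0\}$ and bounded continuous test functions $\phi_1,\dots,\phi_j\colon\RR^d\to\RR$. By conditional exchangeability of $(X^{1,K}_t,\dots,X^{N^K_t,K}_t)$ given $N^K_t$, a direct inclusion--exclusion computation gives, on the event $\{N^K_t\geq j\}$,
\begin{equation*}
  \EE\Bigl[\prod_{i=1}^j \phi_i(X_t^{i,K}) \,\Big|\, N_t^K\Bigr] = \bigl(1+O((N^K_t)^{-1})\bigr)\, \EE\Bigl[\prod_{i=1}^j \langle \bar{\mu}^K_t,\phi_i\rangle \,\Big|\, N^K_t\Bigr] + O\bigl((N^K_t)^{-1}\bigr),
\end{equation*}
with implicit constants depending only on $j$ and $\Vert\phi_1\Vert_\infty,\dots,\Vert\phi_j\Vert_\infty$. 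Since $\bar{\mu}^K_t\to\bar{\mu}_t$ weakly in probability and the integrands are uniformly bounded, dominated convergence yields $\prod_i \langle \bar{\mu}^K_t,\phi_i\rangle \to \prod_i \langle \bar{\mu}_t,\phi_i\rangle$ in $L^1$, and by the $L^1$-contraction of conditional expectation the right-hand side above converges in probability on $\{N^K_t\geq j\}$ to the deterministic limit $\prod_{i=1}^j \langle \bar{\mu}_t,\phi_i\rangle$.

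Finally, since the limit $\bar{\mu}_t^{\otimes j}$ is deterministic, convergence in distribution of the random conditional law $\mathcal{L}(X^{1,K}_t,\dots,X^{j,K}_t\mid N^K_t)$ in $\mathcal{P}((\RR^d)^j)$ on $\{N^K_t\geq j\}$ is equivalent to its convergence in probability there, which in turn follows by a standard density argument from the convergence of expectations against tensor products $\phi_1\otimes\dots\otimes\phi_j$ just established. This is precisely the conditional $P$-chaoticity given $(N^K_t)_K$ with $P=\bar{\mu}_t=\mu_t/\langle\mu_t,1\rangle$. The only delicate aspect of the argument is the correct handling of the random conditioning on $N^K_t$ and on the event $\{N^K_t\geq j\}$, which is ultimately ensured by the facts that $N^K_t\to\infty$ in probability and $\langle\mu_t,1\rangle$ stays bounded away from zero.
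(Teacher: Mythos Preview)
Your argument is correct and takes a genuinely different route from the paper's. The paper does not use Theorem~\ref{thm:rateofconvergence} as a black box; instead it goes back to the coupling $(X_t^n,Y_t^n)$ built in Section~\ref{sec:pathwiseconstruction}, uses the quantitative bound \eqref{eq:boundforC3} together with conditional exchangeability to obtain $\EE\bigl(\tfrac{N_t^K}{K}\|X_t^1-Y_t^1\|^2\bigr)\leq C_t\,\Psi_{d,q}^2(K)$, and then exploits Proposition~\ref{prop:condindep} (i.e.\ that the $Y_t^n$ are conditionally i.i.d.\ of law $\bar\mu_t$) to bound directly $\PP\bigl(\|\mathcal{L}(X_t^1,\dots,X_t^j\mid N_t^K)-\bar\mu_t^{\otimes j}\|_{\BL^*}>\varepsilon,\,N_t^K\geq j\bigr)$ via Markov's inequality. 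Your approach, by contrast, uses only the \emph{conclusion} of Theorem~\ref{thm:rateofconvergence} to get $\bar\mu_t^K\to\bar\mu_t$ in probability, and then runs the classical Sznitman equivalence (empirical-measure convergence $\Leftrightarrow$ chaoticity) conditionally on $N_t^K$. This is more modular and avoids re-opening the coupling construction; the paper's route is slightly more quantitative (the bound on the probability is explicit in $K$) and makes the dependence on the auxiliary system transparent.

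Two small points. First, your invocation of Lemma~\ref{lem:normwasserstein} with $\mu=\mu_t^K$, $\nu=\mu_t$ gives the inequality in the wrong direction for what you need; what you actually want is the elementary identity $\bar\mu_t^K-\bar\mu_t=\langle\mu_t^K,1\rangle^{-1}(\mu_t^K-\mu_t)+\bigl(\langle\mu_t^K,1\rangle^{-1}-\langle\mu_t,1\rangle^{-1}\bigr)\mu_t$, from which $\|\bar\mu_t^K-\bar\mu_t\|_{\BL^*}\to0$ in probability follows at once since $\langle\mu_t^K,1\rangle\to n_t>0$. Second, the ``density argument'' at the end deserves one extra sentence: convergence in probability of $\langle\Pi_K,\phi_1\otimes\cdots\otimes\phi_j\rangle$ for all bounded continuous $\phi_i$ yields convergence of the (random) characteristic functions of $\Pi_K$, hence weak convergence of $\Pi_K$ to $\bar\mu_t^{\otimes j}$ in probability by L\'evy's theorem.
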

 
We end this section gathering some remarks on assumption (\hyperlink{h1}{H.1}), including its  possible relaxation to a chaoticity condition. The proof of this result is given in the \hyperlink{appendix}{Appendix}.

\begin{lem}\phantomsection\label{lem:weakconv_mu0}
  \begin{itemize}
    \item[\textnormal{a)}] Under \textnormal{(\hyperlink{h1}{H.1})},    $(\mu_0^{K})_{K\in \NN\setminus \{0\}}$ converges in law to the deterministic finite measure 
  $$
      \mu_0 \coloneqq \lim_{K\to \infty }  \langle \mu_0^K, 1\rangle  \bar{\mu}_0.
   $$
    \item[\textnormal{b)}] The same conclusion as in  \textnormal{a)} holds  if $( \langle \mu_0^K, 1\rangle)_{K\in \NN\setminus \{0\}}$ converges  in law  as $K\to \infty$ to a constant in $(0,\infty)$ and there exists a  $\bar{\mu}_0$-chaotic   family  of exchangeable random vectors  $( (Y^{1,N}, \dots, Y^{N,N}) : N\in \NN\setminus\{0\}) $   such that for all $K$,  conditionally  on $ K \langle \mu_0^K, 1\rangle = N $, the set of atoms of $\mu_0^K$  has the  same  law as $(Y^{1,N}, \dots, Y^{N,N})$.
\item[\textnormal{c)}] \textnormal{(\hyperlink{h1}{H.1})} holds  if  $K \mu_0^K$ is  for each $K$  a Poisson point measure on $\RR^d$  of intensity $K \nu_0$   with   $\nu_0 \in \mathcal{M}^+ (\RR^d)$ fixed. In this case,   $\mu_0$ defined in \textnormal{a)} is equal to $\nu_0$ and, moreover, we have  $I_4(K)  \leq C K^{-1/2}.$
  \end{itemize}
\end{lem}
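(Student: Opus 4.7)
My plan is to treat the three parts separately, each by adapting a classical large-population argument.

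For part (a), I would fix an arbitrary $\phi\in C_b(\RR^d)$ and write
\[
  \langle\mu_0^K,\phi\rangle = \frac{N_0^K}{K}\cdot\frac{1}{N_0^K}\sum_{n=1}^{N_0^K}\phi(X^n),
\]
where, conditionally on $N_0^K=K\langle\mu_0^K,1\rangle$, the atoms $X^n$ are i.i.d.\ of law $\bar\mu_0$. By (\hyperlink{h1}{H.1}), $\langle\mu_0^K,1\rangle$ converges in law to a deterministic constant $m_0>0$, hence also in probability, so $N_0^K\to\infty$ in probability. A random-index law of large numbers (via a conditional Chebyshev inequality together with the uniform bound $\mathrm{Var}(\frac{1}{N}\sum_{n=1}^N\phi(X^n))\le \|\phi\|_\infty^2/N$) then yields $\frac{1}{N_0^K}\sum_{n=1}^{N_0^K}\phi(X^n)\to\langle\bar\mu_0,\phi\rangle$ in probability. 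Multiplying the two convergences in probability gives $\langle\mu_0^K,\phi\rangle\to m_0\langle\bar\mu_0,\phi\rangle=\langle\mu_0,\phi\rangle$ in probability, and since the limit is deterministic and $\phi$ is arbitrary, this is equivalent to $\mu_0^K\to\mu_0$ in law in $\mathcal{M}^+(\RR^d)$.

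For part (b), the replacement of the i.i.d.\ property by chaoticity is handled via the classical equivalence (see \cite{Szn1991,Mel1996}) between $\bar\mu_0$-chaoticity of an exchangeable family $(Y^{1,N},\dots,Y^{N,N})$ and convergence in law, as $N\to\infty$, of its empirical measure $\pi_N:=\frac{1}{N}\sum_{n=1}^N\delta_{Y^{n,N}}$ to the deterministic limit $\bar\mu_0$ in $\mathcal{P}(\RR^d)$. Conditionally on $N_0^K=N$, the normalized empirical measure of the atoms of $\mu_0^K$ has the same law as $\pi_N$. Conditioning on $N_0^K$ and using $N_0^K\to\infty$ in probability together with a Skorokhod representation to pass along the random indices $N_0^K$ to an almost sure realization of $\pi_{N_0^K}\to\bar\mu_0$, one concludes $\frac{N_0^K}{K}\,\pi_{N_0^K}\to m_0\bar\mu_0$ in law, exactly as in part (a).

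For part (c), I rely on the standard Poisson point measure decomposition: $N_0^K$ is $\mathrm{Poisson}(K\langle\nu_0,1\rangle)$, and conditionally on $N_0^K$ the atoms of $\mu_0^K$ are i.i.d.\ with common law $\bar\nu_0=\nu_0/\langle\nu_0,1\rangle$, which directly verifies (\hyperlink{h1}{H.1}) with $\bar\mu_0=\bar\nu_0$. Consequently $\langle\mu_0^K,1\rangle=N_0^K/K\to\langle\nu_0,1\rangle$ in $L^2$ (hence in law) by the variance bound for Poisson variables, so $\mu_0=\langle\nu_0,1\rangle\bar\nu_0=\nu_0$. The fourth central moment of a $\mathrm{Poisson}(\lambda)$ variable equals $\lambda+3\lambda^2$; applied with $\lambda=K\langle\nu_0,1\rangle$ and divided by $K^4$, this gives $\EE(|\langle\mu_0^K,1\rangle-\langle\nu_0,1\rangle|^4)\le CK^{-2}$, whence $I_4(K)\le CK^{-1/2}$. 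The only real subtlety in the whole argument lies in part (b), where the random-index behavior $N_0^K\to\infty$ must be combined carefully with the chaoticity/empirical-measure equivalence; the Skorokhod representation step (or, equivalently, dominated convergence on conditional expectations of bounded continuous functionals of $\pi_N$) resolves this cleanly.
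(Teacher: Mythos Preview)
Your proposal is correct, and part (c) is essentially identical to the paper's treatment.  Parts (a) and (b), however, are organized differently.

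The paper does not prove (a) separately: it observes that (\hyperlink{h1}{H.1}) is a special case of the chaoticity assumption in (b) (an i.i.d.\ sample is trivially chaotic), and proves (b) directly.  The argument there avoids both the test-function decomposition and Skorokhod representation.  Instead, it uses the $\BL^*$ splitting of Lemma~\ref{lem:normwasserstein} to reduce the problem to showing $\|\bar\mu_0^K-\bar\mu_0\|_{\BL^*}\to 0$ in probability, and then bounds this probability by the elementary inequality
\[
  \PP\bigl(\|\bar\mu_0^K-\bar\mu_0\|_{\BL^*}\geq\delta\bigr)\leq \sup_{N\geq M}\PP\Bigl(\Bigl\|\tfrac{1}{N}\sum_{n=1}^N\delta_{Y^{n,N}}-\bar\mu_0\Bigr\|_{\BL^*}\geq\delta\Bigr)+\PP(N_0^K<M),
\]
obtained by conditioning on $N_0^K$.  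The first term vanishes as $M\to\infty$ by the standard equivalence between chaoticity and empirical-measure convergence; the second vanishes as $K\to\infty$.  This is cleaner than your Skorokhod step, which would require constructing the $\pi_N$ on a common space coherently with the random index $N_0^K$, and it directly yields convergence in the $\BL^*$ metric rather than testing against a single $\phi$ at a time.  Your alternative suggestion (dominated convergence on conditional expectations) is in fact closer to what the paper actually does.

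One small point to watch in your version of (a): concluding $\mu_0^K\to\mu_0$ in law in $\mathcal{M}^+(\RR^d)$ from convergence in probability of $\langle\mu_0^K,\phi\rangle$ for each fixed $\phi$ implicitly uses some uniformity (tightness of the random measures).  In the present setting this is harmless because the atoms are conditionally i.i.d.\ of fixed law $\bar\mu_0$ and the masses converge, so one can upgrade to $\|\bar\mu_0^K-\bar\mu_0\|_{\BL^*}\to 0$ in probability directly (e.g.\ via Glivenko--Cantelli type bounds); but it is worth making that step explicit.
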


\begin{rmk} 
If instead of  \textnormal{(\hyperlink{h1}{H.1})} we assume that the initial data $\mu^K_0$ satisfies  only the condition  in Lemma \ref{lem:weakconv_mu0} \textnormal{b)}, Theorem  \ref{thm:rateofconvergence} still holds  but with an additional term on the r.h.s. of generic form: $C_T    \EE\bigl(\frac{1}{K}\sum_{n=1}^{\mbox{\scalebox{0.7}{$N_0^{K}$}}}\left\| X_0^n-Y_0^n\right\|^2\bigr)$ where, conditionally  on $\{N_0^K=N\}$,   $\bigl( (X_0^1,\dots,X_0^N) , (Y_0^1,\dots,Y_0^N) \bigr)$ is  for each $N,K\in \NN$ a coupling of  the $N$ atoms of $\mu_0^K$ and an i.i.d. sample of size $N$ of the law $\bar{\mu}_0$.  See Remark \ref{rmk:initialcondcoupling} for details and for the optimal  value of this term. 
\end{rmk}

\section{Pathwise constructions and coupling algorithm}\label{sec:pathwiseconstruction}

For the rest of the article we will omit the superscripts $K$ in the particles' positions, e.g. we write  $\bigl(X^{1}_t,\dots ,X^{\mbox{\scalebox{0.7}{$N^K_t$}}}_t\bigr)=\bigl(X^{1,K}_t,\dots , X^{\mbox{\scalebox{0.7}{$N^K_t$}},K}_t\bigr)$ since we will be  working with fixed $K\in \NN\setminus \{0\}$ and  no ambiguity is possible. 

We will construct  both  systems $(\mu_t^K= \frac{1}{K}\sum_{n=1}^{N^{K}_t} \delta_{X^{n}_t})_{t\geq 0} $  and $(\nu_t^K= \frac{1}{K}\sum_{n=1}^{N^{K}_t} \delta_{Y^{n}_t})_{t\geq 0} $  from the following set of independent stochastic inputs defined in a common complete probability space $(\Omega, {\mathcal{F}},\PP)$:
\begin{itemize}
  \item  A sequence $(W^j)_{j\geq 1}$ of independent Brownian motions in $\RR^d$.
  \item A Poisson point measure $\mathcal{N}(\dd s,\dd \rho,d\theta)$  on  $[0,\infty)\times[0,\infty)\times[0,\infty)$, with intensity $\dd s \otimes \dd \rho \otimes \dd \theta$.
  \item A sequence $(Z_0^j)_{j\geq 1}$ of i.i.d. random vectors of law $\bar{\mu}_0$.
  \item A random variable  $N_0^K$ in $\NN$.
\end{itemize}

We will also make use of a special diffusion process considered in \cite{FM2015}, which can be seen as a nonlinear process in the sense of McKean \cite{Szn1991,Mel1996}.
In the current setting, this process is characterized next.

\begin{prop}\label{prop:nonlinearprocess}
  Let $(\mu_t)_{t\geq 0}$ be the unique weak solution  in ${\mathcal{M}}^+(\RR^d)$ of the nonlinear equation
  \begin{equation}\label{bdi:limiteq}
    \frac{\partial\mu_t}{\partial t}=L^*_{\mu_t}\mu_t+\bigl(r-c\langle\mu_t,1\rangle\bigr)\mu_t,
  \end{equation}
  given by Theorem \ref{thm:largepopulationlimit},  with initial condition $\mu_0$. Let $W$ be a  $d-$dimensional Brownian motion and $Y_0$ an independent random variable in  $\RR^d$ with law $\bar{\mu}_0$.  There is pathwise existence and uniqueness for the SDE
  \begin{equation}\label{eq:nonlindiffusion}
    Y_t=Y_0 +\int_0^tb(Y_s,H*\mu_s(Y_s)) \,\dd s+\int_0^t\sigma(Y_s,G*\mu_s(Y_s)) \,\dd W_s.
  \end{equation}
  Moreover, the  flow of  time-marginal laws of $(Y_t)_{t\geq 0}$   is the unique weak solution  $(\bar{\mu}_t)_{t\geq 0}$  in ${\mathcal{P}}(\RR^d)$ of the (linear, non-homogeneous in time)  Fokker-Planck equation 
  \begin{equation}\label{bdi:normalizedeq}
    \frac{\partial\bar{\mu}_t}{\partial t}=L_{\mu_t}^*\bar{\mu}_t,
  \end{equation}
  with respect to test functions as in Theorem \ref{thm:largepopulationlimit}, and  we have $\bar{\mu}_t=\mu_t /\langle \mu_t ,1\rangle$ for all $t\geq 0$. Last, for every bounded measurable function $f\colon\RR^d\to \RR$  we have $\langle \mu_t ,f\rangle =  \EE( f(Y_t) n_t),$ where $n_t$ is the unique solution with  $n_0= \langle \mu_0 ,1\rangle$ of the logistic equation 
  \begin{equation}\label{eq:logisticODE}
    \dd n_t =   \bigl(r-cn_t\bigr) n_t \,\dd t. 
  \end{equation}
\end{prop}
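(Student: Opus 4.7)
The plan is to first treat \eqref{eq:nonlindiffusion} as a standard (non-McKean) SDE with time-dependent coefficients, since the flow $(\mu_t)_{t\geq 0}$ is given by Theorem \ref{thm:largepopulationlimit}. The total mass $n_t=\langle \mu_t,1\rangle$ obeys the logistic ODE \eqref{eq:logisticODE}: indeed, setting $f\equiv 1$ in \eqref{crossdiffeq} we have $L_{\mu_s}1=0$, hence $\dd n_t/\dd t=(r-cn_t)n_t$ with $n_0=\langle \mu_0,1\rangle$, and by classical ODE theory $n_t$ is bounded on any compact interval $[0,T]$. In particular $\sup_{s\in[0,T]}\langle \mu_s,1\rangle<\infty$, so by \textnormal{(\hyperlink{h3}{H.3})} the maps $y\mapsto H*\mu_s(y)$ and $y\mapsto G*\mu_s(y)$ are uniformly (in $s\in[0,T]$) bounded and Lipschitz, with constants depending only on $T$ and the data. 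Combined with \textnormal{(\hyperlink{h2}{H.2})}, this yields uniform-in-time Lipschitz continuity and sublinear growth of $y\mapsto b(y,H*\mu_s(y))$ and $y\mapsto \sigma(y,G*\mu_s(y))$, so standard results (cf.\ any textbook on SDEs) give pathwise existence and uniqueness of the strong solution $(Y_t)_{t\geq 0}$ of \eqref{eq:nonlindiffusion}, together with finite moments on bounded intervals.

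Next, I would derive \eqref{bdi:normalizedeq} from \eqref{crossdiffeq} by normalizing. For a test function $\phi$ as in Theorem \ref{thm:largepopulationlimit} (taken time-independent for simplicity), set $\bar\mu_t:=\mu_t/n_t$. Dividing \eqref{crossdiffeq} by $n_t$ and using $\dot n_t=(r-cn_t)n_t$, a direct differentiation gives
\begin{equation*}
\frac{\dd}{\dd t}\langle \bar\mu_t,\phi\rangle
=\frac{1}{n_t}\langle\mu_t,L_{\mu_t}\phi\rangle+\frac{1}{n_t}(r-cn_t)\langle \mu_t,\phi\rangle -\frac{\dot n_t}{n_t}\langle\bar\mu_t,\phi\rangle
=\langle\bar\mu_t,L_{\mu_t}\phi\rangle,
\end{equation*}
so $(\bar\mu_t)_{t\geq 0}$ is a weak solution of \eqref{bdi:normalizedeq}, and it lies in $\mathcal{P}(\RR^d)$ since $\langle\bar\mu_t,1\rangle\equiv 1$. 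A time-dependent version of the same computation applies to test functions in $C_b^{1,2}([0,T]\times\RR^d)$ with the required growth on $\nabla\phi$.

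Now I would match $\bar\mu_t$ with the marginal law $P_t:=\mathcal{L}(Y_t)$ of the SDE \eqref{eq:nonlindiffusion}. Applying Itô's formula to $\phi(t,Y_t)$ and taking expectations shows that $(P_t)_{t\geq 0}$ is also a weak solution of the linear Fokker--Planck equation \eqref{bdi:normalizedeq} starting from $\bar\mu_0=P_0$. Since this equation is \emph{linear} and its coefficients $b(\cdot,H*\mu_t(\cdot))$, $a(\cdot,G*\mu_t(\cdot))$ are Lipschitz with at-most-linear growth uniformly on $[0,T]$, a standard duality/martingale-problem argument (e.g.\ testing against solutions of the dual Kolmogorov backward equation, which is well-posed under the above regularity) yields uniqueness of weak solutions in $\mathcal{P}(\RR^d)$. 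Hence $P_t=\bar\mu_t$ for all $t\geq 0$, which proves the identity $\bar\mu_t=\mu_t/\langle\mu_t,1\rangle$. The final assertion follows immediately: for any bounded measurable $f$, $\langle\mu_t,f\rangle=n_t\langle\bar\mu_t,f\rangle=n_t\EE(f(Y_t))=\EE(f(Y_t)n_t)$, since $n_t$ is deterministic.

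The main subtlety is the uniqueness step for the Fokker--Planck equation \eqref{bdi:normalizedeq}, because our class of admissible test functions is restricted and the coefficients are only Lipschitz (not smooth). I would handle it either by invoking the generic uniqueness theory for linear Fokker--Planck equations with Lipschitz coefficients and bounded-in-time mass (so that the associated martingale problem is well-posed), or by a direct coupling argument: any two solutions are represented as laws of diffusions driven by the same coefficients, and pathwise uniqueness for \eqref{eq:nonlindiffusion} transfers to uniqueness in law for the time-marginals. Everything else reduces to routine SDE and ODE manipulations under \textnormal{(\hyperlink{h}{H})}.
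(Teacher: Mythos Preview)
Your argument is correct and essentially equivalent to the paper's, but the two proofs run in opposite directions at the key identification step.

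The paper first defines $\bar\mu_t$ as the law of $Y_t$, shows via It\^o's formula that it solves \eqref{bdi:normalizedeq}, and then \emph{multiplies} by $\langle\mu_t,1\rangle$: plugging the test function $h(t,x)=\langle\mu_t,1\rangle f(t,x)$ into \eqref{bdi:normalizedeq} shows that $(\langle\mu_t,1\rangle\bar\mu_t)_{t\geq 0}$ solves the linearized version of \eqref{crossdiffeq} (with $\mu_s$ frozen in $L_{\mu_s}$ and in the mass term). Since $(\mu_t)_{t\geq 0}$ itself trivially solves that linearized equation, the uniqueness argument of \cite[Section 4]{FM2015} forces $\langle\mu_t,1\rangle\bar\mu_t=\mu_t$. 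You instead \emph{divide}: you define $\bar\mu_t:=\mu_t/n_t$ directly, differentiate to see it solves \eqref{bdi:normalizedeq}, and then invoke uniqueness for the linear Fokker--Planck equation in $\mathcal{P}(\RR^d)$ to match it with $\mathcal{L}(Y_t)$.

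Both routes require a uniqueness statement for a linear, time-inhomogeneous equation with Lipschitz coefficients; the paper places it at the level of finite measures (and borrows the argument from \cite{FM2015}), while you place it at the level of probability measures (and appeal to the well-posedness of the associated martingale problem, which is indeed standard under the regularity you extracted from \textnormal{(\hyperlink{h}{H})}). Your version is arguably more self-contained, whereas the paper's avoids re-proving something already available in the reference. One minor point: your differentiation of $\langle\bar\mu_t,\phi\rangle$ should be phrased in integral form, since \eqref{crossdiffeq} only gives $t\mapsto\langle\mu_t,\phi\rangle$ as absolutely continuous rather than $C^1$; this is cosmetic and does not affect the argument.
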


The proof of Proposition \ref{prop:nonlinearprocess} is postponed to Section  \ref{sec:bdi}. 

\begin{rmk}\phantomsection\label{rmk:nonlinearprocess}
  \begin{itemize}
    \item[\textnormal{a)}]  The pathwise properties of the SDE \eqref{eq:nonlindiffusion} stated in  Proposition \ref{prop:nonlinearprocess} imply for  fixed ${\tau}>0$  that if $Y_{\tau}'$ is a random variable of law $\bar{\mu}_{\tau}$  ,  independent of $W$,  then the solution $(Y_t')_{t\geq {\tau}}$  of the SDE
    \begin{equation*}
      Y_t'=Y_{\tau}' + \int_{\tau}^tb(Y_s',H*\mu_s(Y_s')) \,\dd s+\int_{\tau}^t\sigma(Y_s',G*\mu_s(Y_s')) \,\dd W_s,
    \end{equation*}
    has the same law as $(Y_t)_{t\geq {\tau}}$.  In particular, $Y_t'$ has law $\bar{\mu}_t$ for all $t\geq {\tau}$. 
  \item[\textnormal{b)}]   When $\sigma $ and $\, b$ depend only on the position and not on $\mu$, the process \eqref{eq:nonlindiffusion} is the  standard diffusion associated with the generator
    \begin{equation}\label{bd:diffusiongen}
      Lf(x)=\frac{1}{2}\mathrm{Tr}\big(a(x)\mathrm{Hess}f(x))+b(x)\cdot\nabla f(x), 
    \end{equation}
    which  in that case also drives each of the particles of the branching system $(\mu^K_t)_{t\geq 0}$.  Notice also that in this setting, thanks to the Lipschitz character of the coefficients, if $\bar{\mu}_0$ has finite moments of order $q\geq2$, then  finiteness of these moments is  uniformly  propagated over any time  interval $[0,T]$. 
  \end{itemize} 
\end{rmk}

Last, the following construction, based on  optimal transport and adapted from \cite{CF2016}, will allow us to couple the births positions in the two systems in the most efficient way, as discussed in Section  \ref{sec:strategy}.

\begin{lem}\label{lem:optimal_coupling}
  Let  $\mathbf{i}\colon\mathbb{R}\to\mathbb{N}$  denote the function defined by 
  \[\rho \mapsto \mathbf{i}(\rho)=\lfloor \rho\rfloor + 1, \]
  and let $N$ be a positive integer.  Let also $(\bar{\mu}_t)_{t\geq0}$ be a flow of probability measures with finite second order moments that is  weakly continuous. There exists a measurable mapping
  \[ \Lambda^N\colon\mathbb{R}_+\times(\mathbb{R}^d)^N\times[0,N)\to\mathbb{R}^d,\quad (t,\mathbf{x},\rho)\mapsto\Lambda^N_t(\mathbf{x},\rho), \]
  with the following properties:
  \begin{itemize}
  \item For every $t\geq0$ and $\mathbf{x}=(x^1,\dots, x^N)\in(\mathbb{R}^d)^N$, if $\rho$ is uniformly chosen from $[0,N)$, then the pair $(\Lambda^N_t(\mathbf{x},\rho),x^{\mathbf{i}(\rho)})$ is an optimal coupling between $\bar{\mu}_t$ and $\frac{1}{N}\sum_{i=1}^N\delta_{x^i}$ with respect to the cost function $(u,v)\mapsto \vert u-v\vert^2$.
  \item If  $\, \mathbf{Y}$ is any exchangeable random vector in $(\mathbb{R}^d)^N$, then $\mathbb{E}\big(\int_{j-1}^j\phi(\Lambda^N_t(\mathbf{Y},\tau))\dd \tau\big)=\left\langle\bar{\mu}_t,\phi\right\rangle$ for any $j\in\lbrace1,\dots,N\rbrace$, and any bounded measurable function $\phi$.
  \item The function $\Lambda \colon \NN  \times \RR_+ \times \bigl(  \bigcup_{N\in \NN\setminus\{0\}} ( \RR^d )^N \bigr) \times \RR_+ \to \RR^d $ given by  
  \[ \Lambda(N,t, \mathbf{x},\rho)= \Lambda^N_t\bigl((x^n)_{n=1}^N  ,\rho\wedge N\bigr), \]
  if $\mathbf{x}= (x^n)_{n=1}^N \in (\RR^d)^N$, and $0\in  \RR^d $ otherwise,  is measurable. 
  \end{itemize}
\end{lem}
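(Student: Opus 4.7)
My plan is to produce, for each fixed $(N,t,\mathbf{x})$, an optimal transport plan $\pi^\ast$ between $\bar{\mu}_t$ and the empirical measure $\nu_{\mathbf{x}}:=\frac{1}{N}\sum_{i=1}^N \delta_{x^i}$ for the quadratic cost, to disintegrate it along its discrete second marginal as
\[
  \pi^\ast=\sum_{j=1}^N \bar{\mu}_t^{\mathbf{x},j}\otimes\delta_{x^j},\qquad \bar{\mu}_t^{\mathbf{x},j}(\RR^d)=\tfrac{1}{N},
\]
and then to parametrize the normalized piece $N\bar{\mu}_t^{\mathbf{x},j}$ by a Borel map $F^{j}_{t,\mathbf{x}}\colon [0,1)\to\RR^d$ so that $F^{j}_{t,\mathbf{x}}(U)$ has law $N\bar{\mu}_t^{\mathbf{x},j}$ when $U$ is uniform on $[0,1)$. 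Setting $\Lambda^N_t(\mathbf{x},\rho):=F^{\ii(\rho)}_{t,\mathbf{x}}(\rho-\lfloor\rho\rfloor)$ yields the first stated property essentially for free: for $\rho$ uniform on $[0,N)$ the pair $(\Lambda^N_t(\mathbf{x},\rho),x^{\ii(\rho)})$ has joint law $\tfrac{1}{N}\sum_{j=1}^N (N\bar{\mu}_t^{\mathbf{x},j})\otimes \delta_{x^j}=\pi^\ast$, which is optimal by construction.

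For the second property, I would build the construction to be equivariant under permutations of the coordinates of $\mathbf{x}$, in the sense that $F^{\sigma(j)}_{t,\sigma\mathbf{x}}=F^{j}_{t,\mathbf{x}}$ for any permutation $\sigma$. This can be enforced either by uniqueness of the optimal plan (when $\bar{\mu}_t$ is nonatomic and the $x^i$ are distinct) or by canonicalising the selection through the unordered multiset of atoms, splitting mass symmetrically among repeated atoms. Given equivariance, for exchangeable $\mathbf{Y}$ the quantity $\EE\bigl(\int_{j-1}^j \phi(\Lambda^N_t(\mathbf{Y},\tau))\,\dd\tau\bigr)$ does not depend on $j$; since its sum over $j$ equals the deterministic value $N\langle\bar{\mu}_t,\phi\rangle$ (thanks to $\sum_k \bar{\mu}_t^{\mathbf{Y},k}=\bar{\mu}_t$), each summand must equal $\langle\bar{\mu}_t,\phi\rangle$.

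The main obstacle is joint Borel measurability of the whole construction in $(N,t,\mathbf{x},\rho)$. For each $N$, the multifunction $(t,\mathbf{x})\mapsto\{\text{optimal couplings of }(\bar{\mu}_t,\nu_{\mathbf{x}})\}$ is nonempty and weakly compact, and its graph is Borel thanks to the weak continuity of $t\mapsto\bar{\mu}_t$, the continuous dependence of $\nu_{\mathbf{x}}$ on $\mathbf{x}$, and continuity of the quadratic transport cost; a Kuratowski--Ryll-Nardzewski-type measurable selection argument then gives $\pi^\ast$ measurably in $(t,\mathbf{x})$. A universal parametrization of probability measures on $\RR^d$ by maps from $[0,1)$ can be constructed via a fixed Borel isomorphism $\RR^d\simeq[0,1)$ and the inverse-CDF transform of the pushed-forward distribution, yielding joint measurability of $F^{j}_{t,\mathbf{x}}(u)$ in all arguments. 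Bundling the per-$N$ constructions and extending trivially outside the intended domain then produces the desired $\Lambda$. This essentially adapts the construction of \cite{CF2016} to a time-dependent flow $(\bar{\mu}_t)_{t\geq 0}$ and varying $N$, the weak continuity of the flow taking care of the $t$-measurability.
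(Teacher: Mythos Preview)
Your proposal is correct and takes essentially the same approach as the paper. The paper's own proof simply cites \cite[Lemma 3]{CF2016} for the first two properties and adds only the elementary observation that the bundled map $\Lambda$ is measurable because $\Lambda^{-1}(A)$ decomposes as a countable union over $N$ of measurable slices; your sketch unpacks the optimal-transport-plus-disintegration construction underlying that reference, derives the second property via permutation equivariance and the averaging identity, and handles the third bullet by the same per-$N$ bundling idea.
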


\begin{proof}
  Everything is proved in \cite[Lemma 3]{CF2016} except for the last assertion, which follows noting that $\Lambda^{-1}(A) = \bigcup_{N\neq 0}  \{ N\} \times (\Lambda^N)^{-1}(A )  $ is a measurable set for any Borel set $A\in \RR^d$ such that $0\not\in A$, and $\Lambda^{-1}(\{0\})= \left(  \bigcup_{N\neq 0 }  \{ N\} \times \RR_+\times  \cup_{n \neq N} (\RR^d)^n \times \RR_+ \right) \cup \left(  \bigcup_{N\neq 0}  \{ N\} \times (\Lambda^N)^{-1}(\{0\}) \right).$
\end{proof}

\subsection{Coupling algorithm}

Before giving the algorithm, we also  introduce a sequence of \textit{labelling processes}
\[
  (j_t(n) : t\geq 0 )_{n\geq 1},
\]
taking values in the positive integers, that will be  dynamically defined to select from $(W^j)_{j\geq 1}$ the Brownian motions driving each  coupled pairs  of particles $(X_t^n,Y_t^n)$,  in between  reproduction or death events. 

The  systems   $(\mu_t^K= \frac{1}{K}\sum_{n=1}^{N^{K}_t} \delta_{X^{n}_t})_{t\geq 0} $   and  $(\nu_t^K= \frac{1}{K}\sum_{n=1}^{N^{K}_t} \delta_{Y^{n}_t})_{t\geq 0} $   are then constructed simultaneously, through the following algorithm.
\medskip

\noindent\textbf{Algorithm (A)}:  
\hypertarget{a}{}

\begin{itemize}
  \item[0.] We set  $Y_0^n= X_0^n=Z_0^n $ for $n\in \{1,\dots, N^K _0 \} $ and $ \mu^{K}_0 =  \nu^{K}_0 = \frac{1}{K}\sum_{n=1}^{N^K _0}  \delta_{Z^{n}_0}.$ We  also set two counters:    $\overline{N}_0^K=N^K_0$ and $m=0$, and we define $T_0=0$.  Last, we initialize 
  $j_0(n)=n$ for all $n\geq 1$.
  \item[1.] For $t\geq T_m$, we set $j_t(n)= j_{T_m}(n)$ and $\dd B_t^{n}= \dd W_t^{j_t(n)}$, $n\geq 1$, and we define the  dynamics of the two populations  by:
    \begin{equation*}
      X_t^{n}= X_{T_m} + \int_{T_m}^t b\big(X_s^{n},H*\mu_s^{K}(X_s^{n})\big)\,\dd s +  \int_{T_m}^t\sigma\big(X_s^{n},G*\mu_s^{K}(X_s^{n})\big)\dd B_s^{n},   \quad  n=1,\dots, N_{T_m}^K,
    \end{equation*}
    and
    \begin{equation*} 
      Y_t^{n} = Y_{T_m} + \int_{T_m}^t b\big(Y_s^{n},H*\mu_s(Y_s^{n})\big)\,\dd s +  \int_{T_m}^t\sigma\big(Y_s^{n},G*\mu_s(Y_s^{n})\big)\dd B_s^{n},  \quad  n=1,\dots, N_{T_m}^K,
    \end{equation*}
    until the first time $t>T_m$ with $ (t,\rho,\theta)$   an atom of ${\mathcal{N}}$,  such that
    \[ \rho \leq N_{T_m}^K \quad\text{and}\quad \theta \leq r+ c  \frac{N_{T_m}^K}{K} . \]
    We then set $T_{m+1}=t$.
  \item[2.] For $ (t,\rho,\theta)=(T_{m+1},\rho,\theta)$  as before,
    \begin{itemize}
      \item If $\theta \leq r$, we update $N_t^K \coloneqq N_{t-}^K+1$ and $\overline{N}_t^K \coloneqq \overline{N}_{t-}^K+ 1$, then we define:
        \[ X_t^{N_t^K} \coloneqq X_{t-}^{\mathbf{i}(\rho)} \quad \text{and}\quad Y_t^{N_t^K} \coloneqq  \Lambda^{N^K_{t-}}_t\Bigl((X_{t-}^n)_{n =1}^{N^K_{t-}},\rho\Bigr). \]
      \item If $r < \theta \leq  r + c  N_{T_m}^K /K$, we update $N_t^K \coloneqq N_{t-}^K-1$,  then we redefine:
        \begin{align*}
        \bigl(X_t^{\mathbf{i}(\rho)}, X_t^{\mathbf{i}(\rho)+1} ,\dots,  X_t^{N_t^K} \bigr) &\coloneqq \bigl(X_{t-}^{\mathbf{i}(\rho)+1}, X_{t-}^{\mathbf{i}(\rho)+2} ,\dots,  X_{t-}^{N_{t-}^K} \bigr), \\
      \bigl(Y_t^{\mathbf{i}(\rho)}, Y_t^{\mathbf{i}(\rho)+1} ,\dots,  Y_t^{N_t^K} \bigr) &\coloneqq \bigl(Y_{t-}^{\mathbf{i}(\rho)+1}, Y_{t-}^{\mathbf{i}(\rho)+2} ,\dots,  Y_{t-}^{N_{t-}^K} \bigr),
      \end{align*}
        and we set $j_t(n) \coloneqq j_{t-}(n+1)$ for all $n \geq \mathbf{i}(\rho)$.
    \end{itemize}
  \item[3.] We increase $m$ by one and go to Step 1.
\end{itemize}

Let us explain in words how  the algorithm works. The systems  $(\mu_t^K)_{t\geq 0} $  and $(\nu_t^K)_{t\geq 0} $   start at time $t=0$ from the same empirical measure, and pairs of particles are given birth  or die  in the two systems simultaneously  from then on. The variable  $N_t^K$   counts the current number of living particles in each system at time $t$.  The variable 
$\overline{N}_t^K$  in turn counts how many particles  have been alive in each of the two systems or, equivalently, how many  Brownian motions from   $(W^j)_{j\geq 1}$ have been used, during  the whole time interval $[0,t]$.  The usefulness of this counter will come clear shortly.

Now, given an atom  $(t,\rho, \theta)$, its coordinate $t$ is used to sample a proposal of a birth or dead time, and $\theta$ an ``action'' among those two, according to whether $\theta \leq r$ or $r < \theta \leq r + c N_{t-}^K/K$ respectively. 

In a birth event, $\rho \leq N_{t-}^K$ samples  two positions in space,  one distributed according to $\bar{\mu}_{t-}^K$ for the system $\mu^K$ and one according to $\bar{\mu}_t$ for the system $\nu^K$, which are  optimally coupled as explained before.  The pair of newborn particles picks upon birth at time $t$  a new, common driving Brownian motion  $(W_s^{\overline{N}^K})_{s\geq t}$ that is independent of the past of the systems.   

In a death event, $\rho \leq N_{t-}^K$ samples a uniformly distributed  atom from  $\bar{\mu}_{t-}^K$ for the system $\mu^K$ and  from $\bar{\nu}_{t-}^K$  for the system $\nu^K$, with equal index $\mathbf{i}(\rho)$.  The two corresponding particles are then removed,  and their common driving Brownian motion, which corresponds to some $W^j$ with $j\leq \overline{N}_t^K$, is discarded forever.  The indexes of the particles in the two systems are then updated, as well as the Brownian motions  from $(W^j)_{j\geq 1}$  labelled  $B^{\mathbf{i}(\rho)}, B^{\mathbf{i}(\rho)+1},... $,   in order that the particles still alive remain indexed by a full discrete interval  of the form $\{ 1,\dots,  N_t^K\}$, and that the underlying  Brownian motion $W^j$ driving each pair is preserved.    Notice that, due to this updating rule, for all times $t\geq 0$ we have  $j_t(N_t^K)= \overline{N}_t^K$. 

The system $(\nu_t^K)_{t\geq 0}$  satisfies  condition (\hyperlink{c1}{C.1}) by construction.  In the next paragraph, we will check  that it also satisfies condition (\hyperlink{c2}{C.2}). 
  
\subsection{Verification of condition \texorpdfstring{({\protect\hyperlink{c2}{C.2}})}{(C.2)}} 

We will denote by $( {\mathcal{F}}_t)_{t\geq 0}$ the complete filtration generated by all the random objects effectively employed in the algorithm until  each time:
\[ {\mathcal{F}}_t \coloneqq \overline{ \sigma \left(  N_0^K, (Z_0^n )_{n\in \{1,\dots, N^K _0 \} }, ( {\mathcal{N}} ((0,s], \cdot\,, \cdot\,) : s\leq t) ,  (B^n _s : s\leq t)_{n\in \{1,\dots, N^K _t \}}  \right)  }, \]
and by  $( {\mathcal{G}}_t)_{t\geq 0}$ its subfiltration 
\[ {\mathcal{G}}_t \coloneqq \overline{ \sigma \left(  N_s^K : s\leq t  \right)  }. \]

Notice that ${\mathcal{N}}$ is an  $({\mathcal{F}}_t)_{t\geq 0}$-Poisson process, and that the processes $(N_t^K)_{t\geq 0}, (\overline{N}_t^K)_{t\geq 0}$ and $(j_t(n) : t\geq 0 )$, $n\geq 1$ are adapted to  $( {\mathcal{G}}_t )_{t\geq 0}$. 

\begin{rmk}\label{rmk:atoms}
  Thanks to Lemma \ref{lem:optimal_coupling},  the mapping
  \[
    (t,\omega,\rho)\mapsto\left( \Lambda^{N^K_{t-}}_t\Bigl((X_{t-}^n)_{n =1}^{N^K_{t-}},\rho\Bigr), X_{t-}^{\mathbf{i}(\rho)} \right) =  \left( \Lambda\Bigl(N^K_{t-}, t, (X_{t-}^n)_{n =1}^{N^K_{t-}}, \rho \wedge N^K_{t-}\Bigl) , X_{t-}^{\mathbf{i}(\rho)} \right),
  \]
  is measurable with respect to ${\mathcal{P}}red ( {\mathcal{F}}_t )\otimes {\mathcal{B}}( \RR) $, with ${\mathcal{P}}red ( {\mathcal{F}}_t ) \subseteq  {\mathcal{B}}( \RR)\otimes {\mathcal{F}}  $ the predictable sigma-field associated with  $( {\mathcal{F}}_t )_{t\geq 0}$. 
\end{rmk}

The following identity in law is crucial to check (\hyperlink{c2}{C.2}).

\begin{lem}\label{eq:condlawY}
  Let $(\overline{T}_j)_{j \geq 1}$  denote the sequence of consecutive birth times  in $(0,\infty)$ of  one new particle in the system $ (\nu_t^K)_{t\geq 0}$, constructed with algorithm \textnormal{(\hyperlink{a}{A})}, and  $(T_j,\rho_j)$  be the first two coordinates of the atom $(t,\rho,\theta)$ corresponding to $t=T_j$.  Then, conditionally on ${\mathcal{F}}_{ \overline{T}_j-}$ and $\Bigl\{ \rho_j \leq N^K_{\overline{T}_j-} \Bigr\} $, $Y_{\overline{T}_j}^{\mbox{\scalebox{0.7}{$N^K_{\overline{T}_j}$}}}= \Lambda^{N^K_{\overline{T}_j}} \Bigl( (X_{t-}^n)_{n =1}^{N^K_{t-}},\rho_j \Bigr)$ has law  $\bar{\mu}_{\overline{T}_j} $. 
\end{lem}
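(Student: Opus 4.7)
The plan is to combine three ingredients: the construction of the new particle in algorithm (\hyperlink{a}{A}), a standard marked Poisson point measure fact about the conditional law of the space-time ``mark'' $\rho_j$ at the birth event, and the first property of the map $\Lambda^N$ from Lemma \ref{lem:optimal_coupling}.

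First, note that by the algorithm the instant $\overline{T}_j$ equals some $T_{m+1}$ corresponding to an atom $(T_{m+1},\rho_j,\theta_j)$ of $\mathcal{N}$ with $\rho_j\leq N^K_{T_m}$ and $\theta_j\leq r$, and that $N^K$ is constant on $[T_m,T_{m+1})$, so $N^K_{\overline{T}_j-}=N^K_{T_m}$ and the vector $(X^n_{\overline{T}_j-})_{n=1}^{N^K_{\overline{T}_j-}}$ is $\mathcal{F}_{\overline{T}_j-}$-measurable. Moreover, by the algorithm's definition the new atom satisfies $Y^{N^K_{\overline{T}_j}}_{\overline{T}_j}=\Lambda^{N^K_{\overline{T}_j-}}_{\overline{T}_j}\bigl((X^n_{\overline{T}_j-})_{n=1}^{N^K_{\overline{T}_j-}},\rho_j\bigr)$, and by Remark \ref{rmk:atoms} this is a jointly measurable object of the predictable information in $\mathcal{F}_{\overline{T}_j-}$ and of the mark $\rho_j$.

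Next, I would invoke the standard property of a Poisson point measure $\mathcal{N}$ with Lebesgue intensity: conditionally on $\mathcal{F}_{T_m}$, the first atom $(T_{m+1},\rho,\theta)$ falling in the predictable acceptance region $\{\rho\leq N^K_{T_m}\}\cap\{\theta\leq r+cN^K_{T_m}/K\}$ has its marks $(\rho,\theta)$ uniformly distributed on that region, independently of $T_{m+1}-T_m$. Restricting further to the sub-event $\{\theta\leq r\}$, which is precisely the birth event $\{\rho_j\leq N^K_{\overline{T}_j-}\}$ in the lemma's statement, the conditional law of $\rho_j$ given $\mathcal{F}_{\overline{T}_j-}$ is uniform on $[0,N^K_{\overline{T}_j-})$ (independent of $\mathcal{F}_{\overline{T}_j-}$).

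Finally, applying the first bullet of Lemma \ref{lem:optimal_coupling} pointwise on $\mathcal{F}_{\overline{T}_j-}$ with $t=\overline{T}_j$, $N=N^K_{\overline{T}_j-}$, and $\mathbf{x}=(X^n_{\overline{T}_j-})_{n=1}^{N^K_{\overline{T}_j-}}$, and using the just-obtained conditional uniformity of $\rho_j$, the pair $\bigl(\Lambda^{N^K_{\overline{T}_j-}}_{\overline{T}_j}(\mathbf{x},\rho_j),X^{\mathbf{i}(\rho_j)}_{\overline{T}_j-}\bigr)$ is, conditionally on $\mathcal{F}_{\overline{T}_j-}$ and the birth event, an optimal $W_2^2$-coupling of $\bar{\mu}_{\overline{T}_j}$ and $\bar{\mu}^K_{\overline{T}_j-}$; taking its first marginal yields the claim. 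The only real subtlety is the careful handling of the joint measurability and the reduction from $\mathcal{F}_{T_m}$-conditioning to $\mathcal{F}_{\overline{T}_j-}$-conditioning through the predictable acceptance region, which is routine once set up with the compensation formula for the random measure $\mathcal{N}$.
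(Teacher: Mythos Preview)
Your proposal is correct and follows essentially the same route as the paper: the paper also uses Remark \ref{rmk:atoms} and Lemma \ref{lem:optimal_coupling}, and makes the ``conditional uniformity of $\rho_j$'' step rigorous exactly via the compensation formula for $\mathcal{N}$ against an arbitrary bounded $(\mathcal{F}_t)$-predictable test process $(U_t)$, which is what you defer to at the end. The only difference is presentational---the paper carries out the compensation-formula computation explicitly rather than first stating the uniformity fact and then appealing to it.
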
 
   
\begin{proof}
  Let  $f\colon\RR^d\to \RR$  be a bounded measurable function and $(U_t)_{t\geq 0}$ a  bounded  $( {\mathcal{F}}_t )_{t\geq 0}$-predictable process.  We have
  \begin{multline*}
    f \left(  Y_{\overline{T}_j}^{\mbox{\scalebox{0.7}{$N^K_{\overline{T}_j}$}}}\right) \1_{\bigl\{\rho_j \leq N^K_{\overline{T}_j-} \bigr\} }  U_{\overline{T}_j} \\
    = \int_0^{\infty}   \int_0^{\infty}   \int_0^{\infty}     f\left(    \Lambda^{N^K_{t-}}_t\Bigl((X_{t-}^n)_{n =1}^{N^K_{t-}},\rho\Bigr) \right) \1_{\{\rho \leq N^K_{t-},  \, \overline{N}^K_{t-} = N_0^K  +j -1,  \,  \theta \leq r \} }  U_{t} \, \, {\mathcal{N}}(\dd t, \dd \rho, \dd \theta).
  \end{multline*}
  By Remark \ref{rmk:atoms}, we can use the compensation formula with respect to the filtration $( {\mathcal{F}}_t )_{t\geq 0}$, and deduce with Lemma \ref{lem:optimal_coupling}  that 
  \begin{equation*}
  \begin{split}
    \EE\biggl( f\left(Y_{\overline{T}_j}^{\mbox{\scalebox{0.7}{$N^K_{\overline{T}_j}$}}}\right) \1_{\bigl\{\rho_j \leq N^K_{\overline{T}_j-} \bigr\} }  U_{\overline{T}_j}\biggr)=  & \int_0^{\infty}\int_0^{\infty}     \EE\left(  \langle \bar{\mu}_{t} , f \rangle   N^K_{t}  \1_{\{ \overline{N}^K_{t} = N_0^K  +j -1,  \, \theta \leq r \} }  U_{t}  \right)  \dd \theta \dd t   \\
    = & \,   \EE\left(   \int_{[0, \infty)^3 }   \langle \bar{\mu}_{t} , f \rangle        \1_{\{\rho \leq N^K_{t-},  \, \overline{N}^K_{t-} = N_0^K  +j -1,  \, \theta \leq r \} }  U_{t} \, \,  {\mathcal{N}}(\dd t, \dd \rho, \dd \theta) \right)  \\
    = & \,   \EE\biggl(   \langle \bar{\mu}_{\overline{T}_j} , f \rangle   \1_{\bigl\{\rho_j \leq N^K_{\overline{T}_j-} \bigr\} }  U_{\overline{T}_j}  \biggr) .
  \end{split}
  \end{equation*}
  Since any bounded random variable measurable w.r.t. ${\mathcal{F}}_{ \overline{T}_j-}$ can be written as $U_{\overline{T}_j}$ for some predictable process $(U_t)_{t\geq 0}$, the statement is proved. 
\end{proof}

\begin{prop}\label{prop:condindep} 
  For each $t\geq 0$, conditionally on   $ \langle \nu_t^K, 1\rangle $, the atoms of $ \nu_t^K$   are i.i.d. random variables of law $\bar{\mu}_ t$. 
\end{prop}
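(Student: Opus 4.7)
I would proceed by induction on $m \in \NN$ with the stronger statement: for every $t \in [T_m, T_{m+1})$, conditionally on $\mathcal{G}_t = \sigma(N^K_s : s \leq t)$, the atoms $(Y^1_t, \ldots, Y^{N^K_t}_t)$ of $\nu^K_t$ are i.i.d. with common law $\bar{\mu}_t$. Proposition \ref{prop:condindep} follows by further conditioning on $N^K_t$ alone. For the base case $m=0$, the initial positions $(Z^n_0)_{n=1}^{N^K_0}$ are i.i.d. $\bar{\mu}_0$ and independent of $N^K_0$ by (\hyperlink{h1}{H.1}); on $(0, T_1)$ each $Y^n$ solves the McKean-Vlasov SDE \eqref{eq:nonlindiffusion} starting from $Z^n_0$ and driven by $W^n$, so Proposition \ref{prop:nonlinearprocess} and the independence of the inputs imply that the $(Y^n_t)$ remain mutually independent with marginal law $\bar{\mu}_t$. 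Further conditioning on $\mathcal{G}_t$ only adds the information $\{T_1 > t\}$, a function of the Poisson measure $\mathcal{N}$ which is independent of the particles.

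For the inductive step, I first let $t \uparrow T_{m+1}$ and combine path continuity of the $Y^n$ with weak continuity of the flow $t \mapsto \bar{\mu}_t$ (Proposition \ref{prop:nonlinearprocess}) to transfer the inductive hypothesis to $T_{m+1}-$. Two cases arise. If the $(m+1)$-th event is a death, $\mathbf{i}(\rho_{m+1})$ is uniformly distributed on $\{1, \ldots, N^K_{T_{m+1}-}\}$ and is independent of $(Y^n_{T_{m+1}-})$ since $\mathcal{N}$ is independent of the Brownian motions $(W^j)_j$ and of $(Z^n_0)_n$; removing the $\mathbf{i}(\rho_{m+1})$-th coordinate from a conditionally i.i.d. sample and reindexing produces a conditionally i.i.d. sample of length $N^K_{T_{m+1}} = N^K_{T_{m+1}-}-1$. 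If the event is a birth, the newborn $Y^{N^K_{T_{m+1}}}_{T_{m+1}}$ is built from the $X$-system through $\Lambda$, but Lemma \ref{eq:condlawY} guarantees that its conditional law given $\mathcal{F}_{T_{m+1}-}$ is the $\mathcal{G}_{T_{m+1}}$-measurable measure $\bar{\mu}_{T_{m+1}}$; since $\mathcal{F}_{T_{m+1}-}$ contains both $\mathcal{G}_{T_{m+1}}$ and the existing atoms $(Y^n_{T_{m+1}-})_{n \leq N^K_{T_{m+1}-}}$, the tower property combined with the inductive hypothesis at $T_{m+1}-$ yields that $(Y^n_{T_{m+1}})_{n=1}^{N^K_{T_{m+1}}}$ is i.i.d. $\bar{\mu}_{T_{m+1}}$ given $\mathcal{G}_{T_{m+1}}$. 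Propagation from $T_{m+1}$ to any $t \in (T_{m+1}, T_{m+2})$ then repeats the base-case reasoning: by the strong Markov property of the $(W^j)_j$, the increments on $[T_{m+1}, t]$ of the labelled driving noises $W^{j_{T_{m+1}}(n)}$ are independent of $\mathcal{F}_{T_{m+1}}$ and mutually independent across $n$, so the McKean-Vlasov dynamics preserve the conditionally i.i.d. structure with marginal $\bar{\mu}_t$.

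The only subtle point is the birth case, where the newborn is literally a measurable functional of the $X$-system and one could fear residual dependencies with the existing $Y$-particles through the shared Brownian motions or through past optimally-coupled births. The remedy is precisely the conclusion of Lemma \ref{eq:condlawY}: the conditional law of the newborn equals $\bar{\mu}_{T_{m+1}}$, a probability measure fully determined by $\mathcal{G}_{T_{m+1}}$ and not by the richer $\mathcal{F}_{T_{m+1}-}$. This promotes the conditional-distribution statement into conditional independence of the newborn from the rest of the $Y$-configuration given $\mathcal{G}_{T_{m+1}}$, which is exactly what closes the induction.
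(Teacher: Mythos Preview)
Your inductive strategy is different from the paper's proof, which instead constructs an auxiliary process $(\widehat{\nu}^K_t)_{t\geq 0}$ whose atoms are an explicit $\mathcal{G}_t$-measurable selection from an i.i.d.\ family $(Z^j)_{j\geq 1}$ of McKean--Vlasov diffusions running from time $0$; for $\widehat{\nu}^K$ the conditional i.i.d.\ property is then immediate, and the work reduces to checking (by induction on birth times, via Lemma~\ref{eq:condlawY}) that $\nu^K$ and $\widehat{\nu}^K$ have the same law. Your direct induction is a natural alternative, but as written it has a gap.

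In the birth step you assert that $\mathcal{F}_{T_{m+1}-}$ contains $\mathcal{G}_{T_{m+1}}$. This is false: $\mathcal{G}_{T_{m+1}}$ records $N^K_{T_{m+1}}$, hence the sign of the jump at $T_{m+1}$, which is decided by the coordinate $\theta_{m+1}$ of the Poisson atom and therefore lies in $\mathcal{F}_{T_{m+1}}\setminus\mathcal{F}_{T_{m+1}-}$. Consequently the tower identity $\EE[\,\cdot\mid\mathcal{G}_{T_{m+1}}]=\EE\bigl[\EE[\,\cdot\mid\mathcal{F}_{T_{m+1}-}]\bigm|\mathcal{G}_{T_{m+1}}\bigr]$ is not available. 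The repair is to note that, conditionally on $\mathcal{F}_{T_{m+1}-}$, the coordinates $(\rho_{m+1},\theta_{m+1})$ of the triggering atom are independent and uniform on $[0,N^K_{T_{m+1}-}]\times[0,r+cN^K_{T_{m+1}-}/K]$; the conclusion of Lemma~\ref{eq:condlawY} therefore persists after enlarging the conditioning to $\mathcal{F}_{T_{m+1}-}\vee\sigma(\1_{\{\theta_{m+1}\leq r\}})$, which now does contain $\mathcal{G}_{T_{m+1}}$. A second, smaller issue is the step ``let $t\uparrow T_{m+1}$'': taking a limit of conditional laws indexed by deterministic $t$, with the conditioning $\sigma$-field $\mathcal{G}_t$ itself varying, does not directly produce the statement at the random time $T_{m+1}-$. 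It is cleaner to formulate the inductive hypothesis at the stopping time $T_m$ (conditionally on $\mathcal{G}_{T_m}$) and to propagate to $T_{m+1}-$ using the strong Markov property of the Brownian inputs together with their independence from $\mathcal{N}$. The paper's auxiliary-system construction sidesteps both of these filtration subtleties at once.
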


\begin{proof} 
  The proof will be  done  constructing  an alternative system  $ (\widehat{\nu}_t^K= \frac{1}{K}\sum_{n=1}^{N^K _t}  \delta_{\widehat{Y}^{n}_t} )_{t\geq 0}$ with  the  same law as    $ (\nu_t^K)_{t\geq 0}$,  for which the required property is easily checked.    This system is defined on the same probability  space as  $ (\nu_t^K)_{t\geq 0}$,    by  means of a variant of the construction of  $ (\nu_t^K)_{t\geq 0}$ in  algorithm (\hyperlink{a}{A}). The algorithm is as follows:  
  \begin{itemize}
    \item[0.] Define for all $j\geq 1$: 
      \[ Z_t^{j}= Z_0^{j}+  \int_0^t b\big(Z_s^{j},H*\mu_s(Z_s^{j})\big) \,\dd s +  \int_0^t\sigma\big(Z_s^{j},G*\mu_s(Z_s^{j})\big) \,\dd W_t^{j} \, , \quad t\geq 0. \]
      Set   $\widehat{Y}_0^n= Z_0^n $ for $n\in \{1,\dots, N^K _0 \} $ and $   \widehat{\nu}^{K}_0 = \frac{1}{K}\sum_{n=1}^{N^K _0}  \delta_{\widehat{Y}_0^n}.$ As before, we set   the same counters  $\overline{N}_0^K=N^K_0$ and $m=0$,  we define $T_0=0$ and  we initialize $j_0(n)=n$ for all $n\geq 1$.
    \item[1.] For $t\geq T_m$, we set $j_t(n)= j_{T_m}(n)$ and $\dd B_t^{n}= \dd W_t^{j_t(n)}$, $n\geq 1$, and we take 
      \begin{equation*} 
        \widehat{Y}_t^{n}= Z_t^{j_t(n)}, \quad  n=1,\dots, N_{T_m}^K,
      \end{equation*}
      until the first time $t>T_m$ with $ (t,\rho,\theta)$   an atom of ${\mathcal{N}}$,  such that $  \rho \leq N_{T_m}^K$ and $ \theta \leq r+ c  N_{T_m}^K /K .$
      We then set $T_{m+1}=t$.
    \item[2.] For $ (t,\rho,\theta)=(T_{m+1},\rho,\theta)$  as before, 
      \begin{itemize}
        \item If $\theta \leq r$, we update $N_t^K \coloneqq N_{t-}^K+1$ and $\overline{N}_t^K \coloneqq \overline{N}_{t-}^K+ 1$, then we define:
          \[\widehat{Y}_t^{N_t^K} \coloneqq  Z_t^{\overline{N}_t^K}.\]
        \item If $r < \theta \leq r + c  N_{T_m}^K /K$, we update $N_t^K \coloneqq N_{t-}^K-1$,  and we redefine:
          \[ \bigl(\widehat{Y}_t^{\mathbf{i}(\rho)}, \widehat{Y}_t^{\mathbf{i}(\rho)+1} ,\dots,  \widehat{Y}_t^{N_t^K} \bigr) \coloneqq  \bigl(\widehat{Y}_{t-}^{\mathbf{i}(\rho)+1}, \widehat{Y}_{t-}^{\mathbf{i}(\rho)+2} ,\dots,  \widehat{Y}_{t-}^{N_{t-}^K} \bigr),\]
        and $j_t(n) \coloneqq j_{t-}(n+1)$ for all $n \geq \mathbf{i}(\rho)$. 
      \end{itemize}
    \item[3.] We increase $m$ by one and go to Step 1.
  \end{itemize}
  Plainly, instead of  sampling  at each  birth time  $\overline{T}_j$  the position of a new independent particle $Y^{\mbox{\scalebox{0.7}{$N^K_{\overline{T}_j}$}}}$   from the atom $(\overline{T}_j,\rho,\theta)$ of  ${\mathcal{N}}$   as in (\hyperlink{a}{A}),   we  now add a new particle $\widehat{Y}^{\mbox{\scalebox{0.7}{$N^K_{\overline{T}_j}$}}}$ to the system by ``turning on'' at that time the nonlinear diffusion process $ Z^{\mbox{\scalebox{0.7}{$\overline{N}^K_{\overline{T}_j}$}}}=Z^{ N_0^K + j} $, which has evolved independently  since time $t=0$,  driven by the same Brownian motion $ W^{ N_0^K + j }$  that drives  the process $\Bigl(Y_t^{\mbox{\scalebox{0.7}{$N^K_{\overline{T}_j}$}}} : t \geq \overline{T}_j\Bigr)$ in the construction (\hyperlink{a}{A}). Call now
  \[ \widehat{{\mathcal{F}}}_t \coloneqq \overline{   \sigma \Bigl(  {\mathcal{F}}_t \vee  \Bigl(Z_{\overline{T}_k}^{N_0^K + k} : N_0^K+k \leq \overline{N}_t^K   \Bigr) \Bigr)  }, \]
  the filtration containing the information effectively employed to construct the process $ (\widehat{\nu}_t^K)$, 
  and let  $(V_t)_{t \geq 0}$ be a bounded  left continuous process adapted to   $( \widehat{{\mathcal{F}}}_t)_{t\geq 0}$. 
  Conditionally on $N_0^K$,   $ V_{\overline{T}_j}$  depends only on ${\mathcal{N}} $ and $(W^k,  Z_0^k) $  for $k< N_0^K + j$,  while  $\bigl(Z_t^{ N_0^K + j}\bigr)_{t\geq 0}$  is independent of them. Therefore, we have
  \begin{equation*}
  \begin{split}
    \EE\biggl( f\biggl(\widehat{Y}_{\overline{T}_j}^{\mbox{\scalebox{0.7}{$N^K_{\overline{T}_j}$}}}\biggr) \1_{\bigl\{\rho_j \leq N^K_{\overline{T}_j-} \bigr\} }  V_{\overline{T}_j} \biggr)= {}&    \EE \biggl( f\Bigl(Z_{\overline{T}_j}^{N^K_0+ j}\Bigr) \1_{\bigl\{\rho_j \leq N^K_{\overline{T}_j-}  \bigr\} }  V_{\overline{T}_j} \biggr)  \\
    = {}&      \EE\biggl(  \langle \bar{\mu}_{\overline{T}_j} , f \rangle \1_{\bigl\{ \rho_j \leq N^K_{\overline{T}_j-} \bigr\} }  V_{\overline{T}_j}\biggr),
  \end{split}
  \end{equation*}
  by Remark \ref{rmk:nonlinearprocess} a).   This implies that, conditionally on  $\widehat{\mathcal{F}}_{ \overline{T}_j-}$ and  $\{ \rho_j \leq N^K_{\overline{T}_j-}\} $, the random variable $\widehat{Y}_{\overline{T}_j}^{\mbox{\scalebox{0.7}{$N^K_{\overline{T}_j}$}}}$ has the law  $\bar{\mu}_{\overline{T}_j} $.  Comparing this to the setting in Lemma \ref{eq:condlawY},   one can check  by induction on $j$ that the processes $ (\nu_t^K)_{t\geq 0}$ and  $ (\widehat{\nu}_t^K)$ have the same law on each of their (common) time intervals $[0, \overline{T}_{j}]$, hence over all $[0,\infty)$. 
 
  To conclude, notice that the i.i.d  processes $(Z_t^j)_{t\geq 0}, j\geq 1$  have law $\bar{\mu}_t$ at each  $t\geq 0$, and they are independent of the filtration $( {\mathcal{G}}_t)_{t\geq 0}  $ with respect to which the process $(N_t^K)_{t\geq 0}$ is measurable.  Moreover,  for each $t\geq 0$, $\{ \widehat{Y}^{1}_t,  \dots, \widehat{Y}_t^{N^K _t} \} = \{ Z^{j_t(1)}_t, \dots,  Z^{j_t(N^K _t)}_t \}  $ is a random subset of  $ \{Z^1_t, \dots,  Z^{  \overline{N}_t^K }_t \} $, selected in a way that is measurable w.r.t. $ {\mathcal{G}}_t $. This readily implies that, conditionally on $N^K _t=N$,  $\{ \widehat{Y}^{1}_t,  \dots, \widehat{Y}_t^{N} \} $ are $N$ i.i.d.  random variables of law $\bar{\mu}_t$, as required.
\end{proof}

\section{Proof of Theorem \ref{thm:rateofconvergence}: pure binary branching case}\label{sec:bd}

We consider in this section the case where interactions take place only through the reproduction events,  that is, due only to the fact  that the position of a newborn individual coincides at its birth with that of its parent (after which all individuals evolve completely independently). We provide the complete proof for this case as it might be of  independent interest, since  convergence bounds are neither available  in this  basic  setting, and also because it is useful to illustrate directly the main arguments.
 
We assume the following throughout this section.

\medskip

\noindent\textbf{Hypothesis (H')}:
\hypertarget{h'}{}
 
\begin{enumerate}
 \item[\hypertarget{h0'}{H.1'.}]  (\hyperlink{h1}{H.1}) holds.
  \item[\hypertarget{h2'}{H.2'.}] The coefficients $\sigma\colon\RR^d \to \RR^{d\otimes d}$ and $b\colon\RR^d\to\RR^d$ do not depend on $\mu_t^K$ and, moreover, they are Lipschitz continuous with $\sigma$ bounded (for simplicity).
  \item[\hypertarget{h3'}{H.3'.}] The individual instantaneous birth and death rates are time inhomogeneous, specified by two measurable functions $r,c\colon[0,T]\to\RR_+$ bounded by some positive constants $\bar{r}$ and $\bar{c}$, respectively.
\end{enumerate}

Notice that, since $r$ and $c$ are deterministic measurable functions of $t$, they are predictable when seen as processes (cf. the sigma-field generated by continuous functions on $\RR_+$ is the Borel sigma-field).

The analog of Theorem \ref{thm:largepopulationlimit} is standard in this scenario (or can be proved by the same techniques used in \cite{FM2015}), and the limit in law of  the process $(\mu_t^K)_{t\geq0}$ is given by the unique weak solution  in ${\mathcal{M}}^+(\RR^d)$ to the linear evolution equation
\begin{equation}\label{eq:linearevol}
  \langle\mu_t,f(t,\cdot)\rangle=\langle\mu_0,f(0,\cdot)\rangle+\int_0^t\langle\mu_s,\partial_sf(s,\cdot)+Lf(s,\cdot)+(r(s)-c(s))f(s,\cdot)\rangle\,\dd s,\quad \forall t\in[0,T],
\end{equation}
for each $f\in C^{1,2}([0,T]\times\RR^d)$, where $L$ is the  time-homogeneous operator defined in \eqref{bd:diffusiongen}.

The construction of the coupling with the auxiliary system is  essentially the same as  in Section \ref{sec:pathwiseconstruction},  using algorithm (\hyperlink{a}{A}) with two minor modifications:
\begin{itemize}
  \item[-] Step 1 is carried out until the first time $t>T_m$, where
    $ (t,\rho,\theta)$ is an atom of ${\mathcal{N}}$ such that $  \rho \leq N_{T_m}^K $  and  $\theta \leq r(t)+ c (t) $, at which one sets $T_{m+1}=t$.
  \item[-]  The updates in Step 2 are carried out according to whether $\theta \leq r(t)$ or otherwise $r(t) < \theta \leq  r(t) + c(t) $.
\end{itemize}
 In  between birth or deaths events,  individuals in the system $(\mu_t^K)_{t\geq 0}$   evolve according to the SDEs
\begin{equation*}
  \dd X_t^n = b(X_t^n)\,\dd t + \sigma(X_t^n)\,\dd B_t^n, \quad n=1,\dots, N_t^K, 
\end{equation*}
as also do the individuals $Y^n$  in the system  $(\nu_t^K)_{t\geq 0}$.

We establish some controls for the mass of the process $(\mu_t^{K})_{t\geq 0}$.  

\begin{lem}\label{lem:boundsmass1}
  For each $T>0$  and $p\geq 1$ there is a constant $C_{T,p}>0$ such that
  \[\sup_{K\in\NN\setminus\{0\}}  \EE\biggl(\sup_{t\in [0,T]}\langle\mu_t^{K},1\rangle^p\biggr)<C_{T,p} \sup_{K\in\NN\setminus\{0\}}\EE(\langle\mu_0^{K},1\rangle^p).\]
  Moreover,  if $ \sup_{K\in\NN\setminus\{0\}}\EE(\langle\mu_0^{K},1\rangle)<\infty$,  for all $T>0$ we have
  \[\EE \big( \big| \langle\mu^K_t,1\rangle - \langle\mu_t,1\rangle \big| \big) \leq C_T  \bigl(  I_1(K) +K^{-\frac{1}{2}} \bigr), \]
  with
 \begin{align*}
  I_1(K)=\EE\bigl(| \langle\mu^K_0,1\rangle-\langle\mu_0,1\rangle|\bigr). 
\end{align*}
\end{lem}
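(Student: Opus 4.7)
Write $N_t^K = K\langle \mu_t^K,1\rangle$; this is a pure birth-death process in $\mathbb{N}$ with per-individual birth rate $r(t)\leq \bar r$ and per-individual death rate $c(t)\leq \bar c$. Using the Poisson construction, for every $C^1$ function $f$ on $\mathbb{N}$ one has
\begin{equation*}
  f(N_t^K) = f(N_0^K) + \int_0^t \bigl[r(s)(f(N_s^K+1)-f(N_s^K)) + c(s)(f(N_s^K-1)-f(N_s^K))\bigr]\, N_s^K\,\dd s + M_t^{K,f},
\end{equation*}
where $M^{K,f}$ is a local martingale. Applied to $f(n)=n$, this yields the decomposition
\begin{equation}\label{eq:massdecomp}
  \langle\mu_t^K,1\rangle = \langle\mu_0^K,1\rangle + \int_0^t (r(s)-c(s))\,\langle\mu_s^K,1\rangle\,\dd s + \frac{1}{K}M_t^K,
\end{equation}
with $\langle M^K\rangle_t = \int_0^t (r(s)+c(s))\,N_s^K\,\dd s$.

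For the first bound I would apply the above identity with $f(n)=n^p$ for $p\geq 1$; using the elementary estimate $|(n\pm 1)^p - n^p|\leq C_p(1+n^{p-1})$ one obtains
\begin{equation*}
  \EE\bigl((N_t^K)^p\bigr) \leq \EE\bigl((N_0^K)^p\bigr) + C_p(\bar r+\bar c)\int_0^t \EE\bigl((N_s^K)^p\bigr)\,\dd s + C_{p,T},
\end{equation*}
whence Gronwall controls $\sup_{t\leq T}\EE\bigl(\langle \mu_t^K,1\rangle^p\bigr)$ by $C_{T,p}\sup_K\EE(\langle \mu_0^K,1\rangle^p)$. To move the supremum inside the expectation, I would first split \eqref{eq:massdecomp} into a drift bounded by $(\bar r+\bar c)\int_0^t \langle\mu_s^K,1\rangle\,\dd s$ and the martingale $M_t^K/K$; applying Doob and the previous moment estimate (or the Burkholder-Davis-Gundy inequality for the jump martingale) yields the claim. (Equivalently, one may stochastically dominate $N^K$ by a Yule process of birth rate $\bar r$, whose moments are classical.)

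For the second bound I would compare the stochastic equation \eqref{eq:massdecomp} with the ODE $n_t = n_0 + \int_0^t (r(s)-c(s))\, n_s\,\dd s$ satisfied by $n_t=\langle \mu_t,1\rangle$ (this is \eqref{crossdiffeq} applied to $f\equiv 1$, specialised to the time-inhomogeneous rates). Subtracting and taking absolute values,
\begin{equation*}
  \bigl|\langle\mu_t^K,1\rangle - n_t\bigr| \leq \bigl|\langle\mu_0^K,1\rangle - n_0\bigr| + (\bar r + \bar c)\int_0^t \bigl|\langle\mu_s^K,1\rangle - n_s\bigr|\,\dd s + \frac{1}{K}|M_t^K|.
\end{equation*}
By Cauchy--Schwarz and the bracket identity,
\begin{equation*}
  \EE\bigl(|M_t^K|\bigr)\leq \EE\bigl(\langle M^K\rangle_t\bigr)^{1/2}\leq \Bigl((\bar r+\bar c)K\int_0^t \EE\langle\mu_s^K,1\rangle\,\dd s\Bigr)^{1/2}\leq C_T\sqrt{K},
\end{equation*}
using the first-moment bound already established under $\sup_K \EE(\langle\mu_0^K,1\rangle)<\infty$. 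Taking expectations and invoking Gronwall then gives
\begin{equation*}
  \sup_{t\in[0,T]}\EE\bigl(|\langle\mu_t^K,1\rangle - n_t|\bigr)\leq C_T\bigl(I_1(K)+K^{-1/2}\bigr),
\end{equation*}
as asserted. No step is genuinely delicate here; the only mild care is in checking that the local martingales are true martingales, which follows from the $L^p$ bounds on $N^K$ established in the first part.
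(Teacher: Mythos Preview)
Your proposal is correct and follows essentially the same approach as the paper: for the first part the paper simply cites \cite[Lemma 3.3]{FM2015} while you sketch a direct moment-plus-Gronwall argument (which is what that reference does), and for the second part both you and the paper use the Poisson decomposition \eqref{eq:massdecomp}, compare with the limiting ODE, bound $\EE(|M_t^K|)$ via the bracket (the paper phrases it as BDG, you as Cauchy--Schwarz, which coincide here), and conclude by Gronwall.
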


\begin{proof}
  The first claim is shown as in \cite[Lemma 3.3]{FM2015} in a more general setting. For the second assertion, we write the dynamics of the number of particles in the system in terms of the Poisson point measure $\mathcal{N}$ used in algorithm (\hyperlink{a}{A}). We obtain
  \begin{align*}
    N_t^{K}=&\ N_0^{K}+\int_0^t\int_{\mathbb{R}_+}\int_{\mathbb{R}_+}\1_{\rho\leq N_{s-}^{K}}\left(\1_{\theta\leq r(s)}-\1_{r(s)<\theta\leq r(s)+c(s)}\right)\,\mathcal{N}(\dd s,\dd \rho,\dd \theta)\\
    =&\ N_0^{K}+\int_0^t(r(s)-c(s))N_{s}^{K}\,\dd s+M_t^{K},
  \end{align*}
  where $(M_t^K)_{t\geq0}$ is a martingale since,    for all $t\geq 0$, 
  \[ \EE \biggl( \int_0^t\int_{\mathbb{R}_+}\int_{\mathbb{R}_+} \Bigl| \1_{\rho\leq N_{s}^{K}}\Bigl(\1_{\theta\leq r(s)}-\1_{r(s)<\theta\leq r(s)+c(s)}\Big)\Bigr| \,\dd s\dd \rho\dd \theta \biggr) \leq ( \bar{r}+\bar{c} ) \EE \biggl(\int_0^t  N^K_s  \,\dd s \biggr)<\infty, \]
  by the first part and the assumption on the total mass.  Comparing this evolution to the ODE \eqref{eq:linearode} satisfied by the total mass of the limiting measure,  we get the  estimate
  \begin{align*}
    \EE\Big(\Big|\frac{N_t^{K}}{K}-\langle\mu_t,1\rangle\Big|\Big)\leq&\ \EE\Big(\Big|\frac{N_0^{K}}{K}-\langle\mu_0,1\rangle\Big|\Big)+(\bar{r}+\bar{c})\int_0^t\EE\Big(\Big|\frac{N_s^{K}}{K}-\langle\mu_s,1\rangle\Big|\Big)\,\dd s+\EE\Big(\frac{\vert M_t^{K}\vert}{K}\Big). 
  \end{align*}
  The last term is controlled using the Burkholder-Davis-Gundy (BDG) inequality  as follows
  \begin{align*}
    \EE\biggl(\frac{\vert M_t^{K}\vert}{K}\biggr) \leq{}&  \frac{ 1}{K} \,  \EE  \left( \int_0^t\int_{\mathbb{R}_+}\int_{\mathbb{R}_+}\1_{\{ \rho\leq N_{s-}^{K} , \,    \theta\leq r(s)+c(s)\} } \,\mathcal{N}(\dd s,\dd \rho,\dd \theta) \right)^{\frac{1}{2}} \\
    ={}&        \frac{\EE((\int_0^t(r(s)+c(s))N_{s}^{K}\, \dd s) ^{\frac{1}{2}}}{K}   \\ 
      \leq{}& \frac{C_T }{\sqrt{K}}\left( \sup_{K\in\NN\setminus\{0\}}\EE(\langle\mu_0^{K},1\rangle) (\bar{r}+\bar{c})e^{\bar{r}}t\right)^\frac{1}{2}, 
  \end{align*}
  for all $t\in [0,T]$.   We conclude by Gronwall's lemma that
  \begin{align*}
    \EE\Big(\Big|\frac{N_t^{K}}{K}-\langle\mu_t,1\rangle\Big|\Big) \leq C_T \biggl(\EE\Big(\Big|\frac{N_0^{K}}{K}-\langle\mu_0,1\rangle\Big|\Big)+\frac{1}{\sqrt{K}}\biggr).
    \mbox{\qedhere}
  \end{align*}
\end{proof}

The analogue of  Proposition \ref{prop:nonlinearprocess} in this section's setting is rather elementary, yet  illustrative  for the general case,  so we state it in detail and prove it next.

\begin{prop}\label{prop:nonnonlinear}
  Let $(\mu_t)_{t\geq0}$ be the unique weak solution in $\mathcal{M}^+(\RR^d)$ of the linear equation
  \begin{equation*}%\label{bd:limiteq}
    \frac{\partial\mu_t}{\partial t} = L^*\mu_t + (r(t)-c(t))\mu_t,
  \end{equation*}
  with initial condition $\mu_0$ (given as a particular case of Theorem \ref{thm:largepopulationlimit}),  and $(Y_t)_{t\geq 0}$ be the unique pathwise solution to the SDE  \begin{align*}
    Y_t = Y_0 + \int_0^t b(Y_s) \,\dd s + \int_0^t \sigma(Y_s) \,\dd W_s,
  \end{align*}
  where $W$ is a $d$-dimensional Brownian motion and $Y_0$ and independent random variable in $\RR^d$ with law $\bar{\mu}_0$. Then, the flow $(\bar{\mu}_t)_{t\geq0}$   of time-marginal laws   of $(Y_t)_{t\geq 0}$ is the unique weak solution of the Fokker-Planck equation 
  \[ \frac{\partial\bar{\mu}_t}{\partial t} = L^* \bar{\mu}_t, \]
  and  satisfies $\bar{\mu}_t = \mu_t/\langle\mu_t,1\rangle$ for all $t\geq0$.  In particular, for each bounded  real function $f$ we have $\langle\mu_t,f\rangle = \EE(f(Y_t)n_t)$, where $n_t$ is the unique solution with $n_0 = \langle\mu_0,1\rangle$ of the linear differential equation
  \begin{equation}\label{eq:linearode}
    \dd n_t = (r(t)-c(t))n_t \,\dd t. 
  \end{equation}
\end{prop}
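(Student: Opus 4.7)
My proof plan proceeds in three steps. First, since $b$ and $\sigma$ are globally Lipschitz (and $\sigma$ bounded) by (\hyperlink{h2'}{H.2'}), classical It\^o theory gives pathwise existence and uniqueness for the SDE satisfied by $(Y_t)_{t\geq 0}$. I would then apply It\^o's formula to $f(t,Y_t)$ for test functions $f\in C_b^{1,2}([0,T]\times\RR^d)$ with $\sup(1+|x|)|\nabla f(t,x)|<\infty$. The boundedness of $\sigma$ together with this growth condition ensure that the resulting stochastic integral is a true martingale, so taking expectations yields that the flow of marginals $\bar\mu_t:=\mathcal{L}(Y_t)$ satisfies, in the weak sense,
\[
  \langle\bar\mu_t,f(t,\cdot)\rangle = \langle\bar\mu_0,f(0,\cdot)\rangle + \int_0^t \langle\bar\mu_s,\partial_s f(s,\cdot)+Lf(s,\cdot)\rangle\,\dd s,
\]
i.e. $\partial_t\bar\mu_t = L^*\bar\mu_t$ weakly. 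Uniqueness of such a probability-measure-valued weak solution is standard under Lipschitz coefficients (either by duality with the backward Kolmogorov equation associated with $L$, or equivalently via well-posedness of the martingale problem for $L$), giving the first assertion.

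Second, I would define $\nu_t:=n_t\bar\mu_t\in\mathcal{M}^+(\RR^d)$ and check that it is a weak solution of \eqref{eq:linearevol} with initial datum $\mu_0$. For any admissible test function $f$, differentiating the product $t\mapsto n_t\langle\bar\mu_t,f(t,\cdot)\rangle$ in time and combining the ODE \eqref{eq:linearode} with the weak form just obtained for $\bar\mu$ yields
\[
  \langle\nu_t,f(t,\cdot)\rangle = \langle\nu_0,f(0,\cdot)\rangle + \int_0^t \langle\nu_s,\partial_s f(s,\cdot)+Lf(s,\cdot)+(r(s)-c(s))f(s,\cdot)\rangle\,\dd s,
\]
which is exactly the weak formulation of \eqref{eq:linearevol}. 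Since $\nu_0=n_0\bar\mu_0=\langle\mu_0,1\rangle\bar\mu_0=\mu_0$, and the linear equation \eqref{eq:linearevol} has a unique weak solution (by the linear analogue of Theorem \ref{thm:largepopulationlimit} recalled at the beginning of this section), we conclude $\mu_t=\nu_t=n_t\bar\mu_t$ for every $t\in[0,T]$.

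Finally, testing against $f\equiv 1$ gives $\langle\mu_t,1\rangle=n_t$, hence $\bar\mu_t=\mu_t/\langle\mu_t,1\rangle$, and $\langle\mu_t,f\rangle=n_t\,\EE(f(Y_t))$ for every bounded measurable $f$ follows by a standard monotone-class argument (knowing the identity for $f\in C_b^{1,2}$ with the required gradient growth, which forms a measure-determining class). The main technical care needed is to reconcile the admissible test-function classes in the two weak formulations (the Fokker-Planck equation for $\bar\mu_t$ and equation \eqref{eq:linearevol} for $\mu_t$) and to verify the true-martingale property of the stochastic integral appearing in It\^o's formula; both are settled by the growth constraint on $\nabla f$ together with the boundedness of $\sigma$ and the Lipschitz character of $b$.
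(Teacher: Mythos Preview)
Your proposal is correct and follows essentially the same approach as the paper: both establish the Fokker--Planck equation for $\bar\mu_t$ via It\^o's formula, then multiply by the mass factor and invoke uniqueness for the linear evolution \eqref{eq:linearevol}. The only cosmetic difference is that the paper multiplies by $\langle\mu_t,1\rangle$ (using the test function $h(t,x)=\langle\mu_t,1\rangle f(t,x)$ directly in the Fokker--Planck equation) and deduces the ODE for the mass at the end, whereas you first introduce $n_t$ as the ODE solution, form $\nu_t=n_t\bar\mu_t$, and recover $\langle\mu_t,1\rangle=n_t$ by testing against $f\equiv 1$; the substance is the same.
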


\begin{proof}
  The first claim is standard and easily seen using It\^o's formula (uniqueness is also standard using e.g. the Feynman-Kac formula).  The relation between  the law of $Y_t$ and $\mu_t$ for all $t\geq0$ is easily shown considering the function $h(t,x)= \langle\mu_t,1\rangle f(t,x)$ and computing
  \begin{align*}
    \langle\bar{\mu}_t ,h(t, \cdot)\rangle ={}& \langle\bar{\mu}_0,h(0,\cdot)\rangle + \int_0^t \langle\bar{\mu}_s,\partial_sh(s,\cdot)+ Lh(s,\cdot) \rangle \,\dd s\\
    ={}&  \langle \langle\mu_0,1\rangle  \bar{\mu}_0,f(0,\cdot)\rangle + \int_0^t\langle \bar{\mu}_s , f(s,\cdot)\partial_s\langle\mu_s,1\rangle + \langle\mu_s,1\rangle\partial_s f(s,\cdot) + \langle\mu_s,1\rangle L f(s,\cdot)\rangle \,\dd s\\
    ={}&    \langle  \langle\mu_0,1\rangle \bar{\mu}_0,f(0,\cdot)\rangle + \int_0^t\langle \langle\mu_s,1\rangle\bar{\mu}_s , \partial_s f(s,\cdot) + Lf(s,\cdot) + (r(s)-c(s))f(s,\cdot)\rangle \,\dd s. 
  \end{align*}
  This means that $ (\langle\mu_t,1\rangle\bar{\mu}_t )_{t\geq 0}$ satisfies equation  \eqref{eq:linearevol}. 
  Uniqueness for that equation yields $ \langle\mu_t,1\rangle\bar{\mu}_t = \mu_t$ for all $t\geq0$ as claimed.  Consequently, 
    \[ \langle\mu_t,f\rangle = \EE(\langle\mu_t,1\rangle f(Y_t)),\]
  for all bounded $f$, and  the fact that  $(\langle\mu_t,1\rangle)_{t\geq0}$ satisfies  \eqref{eq:linearode} is immediate.
\end{proof}

In order to prove that condition  (\hyperlink{c3}{C.3}) holds,  one last additional estimate is needed, which will be used to  control the joint evolution of coupled particles, in between birth or death events. 

\begin{lem}\label{bd:lem:lipschitzdyn}
  Let $X=(X_t)_{t\geq0}$ and $Y=(Y_t)_{t\geq0}$ be  two  diffusion processes   with generator $L$ driven by the same Brownian motion $B$.  For each $T>0$ there exists $C_T>0$ such that for all $0<u<t<T$
  \begin{align*}
    \EE(\|X_t-Y_t\|^2-\|X_{u}-Y_{u}\|^2)\leq C_T\int_{u}^t\EE(\|X_s-Y_s\|^2) \,\dd s.
  \end{align*}
 \end{lem}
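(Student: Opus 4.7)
The plan is to apply It\^o's formula to the squared norm of the difference of the two coupled diffusions, and then take expectations, exploiting the fact that both $b$ and $\sigma$ are Lipschitz (by hypothesis (\hyperlink{h2'}{H.2'})) and that $\sigma$ is bounded.

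More precisely, since $X$ and $Y$ solve
\[
dX_t = b(X_t)\,\dd t + \sigma(X_t)\,\dd B_t, \qquad dY_t = b(Y_t)\,\dd t + \sigma(Y_t)\,\dd B_t,
\]
the difference $D_t \coloneqq X_t-Y_t$ satisfies
\[
dD_t = (b(X_t)-b(Y_t))\,\dd t + (\sigma(X_t)-\sigma(Y_t))\,\dd B_t.
\]
Applying It\^o's formula to $\|D_t\|^2$ yields
\[
d\|D_t\|^2 = 2 D_t \cdot (b(X_t)-b(Y_t))\,\dd t + \|\sigma(X_t)-\sigma(Y_t)\|_{\mathrm{HS}}^2\,\dd t + 2 D_t \cdot (\sigma(X_t)-\sigma(Y_t))\,\dd B_t.
\]

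The next step is to show the It\^o integral is a true martingale. Since $\sigma$ is bounded, $\|\sigma(X_s)-\sigma(Y_s)\|_{\mathrm{HS}} \leq 2\|\sigma\|_\infty$; combined with standard moment estimates giving $\sup_{s\in[0,T]}\EE(\|D_s\|^2)<\infty$ (obtained from the boundedness of $\sigma$ and the Lipschitz character of $b$ via Gronwall), one has $\EE\!\bigl(\int_0^T \|D_s\|^2 \|\sigma(X_s)-\sigma(Y_s)\|_{\mathrm{HS}}^2\,\dd s\bigr)<\infty$, so the stochastic integral has zero expectation.

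Integrating between $u$ and $t$, taking expectation, and using $2 D_s \cdot (b(X_s)-b(Y_s)) \leq 2 L_b \|D_s\|^2$ together with $\|\sigma(X_s)-\sigma(Y_s)\|_{\mathrm{HS}}^2 \leq L_\sigma^2 \|D_s\|^2$ (with $L_b, L_\sigma$ the Lipschitz constants from (\hyperlink{h2'}{H.2'})), we conclude
\[
\EE(\|D_t\|^2) - \EE(\|D_u\|^2) \leq (2L_b + L_\sigma^2) \int_u^t \EE(\|D_s\|^2)\,\dd s,
\]
which gives the claim with $C_T = 2L_b + L_\sigma^2$ (independent of $T$, in fact). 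The only subtle point is the martingale verification; it is not a genuine obstacle here thanks to the boundedness of $\sigma$, but it is the place where one must be careful, since without that boundedness one would need to pre-localise and pass to the limit using the a priori second moment bound on $D$.
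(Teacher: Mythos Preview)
Your proof is correct and follows the same approach as the paper: apply It\^o's formula to $\|X_t-Y_t\|^2$, bound the drift and quadratic-variation terms using the Lipschitz constants of $b$ and $\sigma$, and take expectation. The only minor difference is in handling the stochastic integral: the paper localizes with $\tau_n=\inf\{s:\|X_s\|^2+\|Y_s\|^2>n\}$, first establishes $\EE\bigl(\sup_{s\in[0,T]}\|X_s\|^2\bigr)<\infty$ via BDG and Gronwall, and then passes to the limit by dominated convergence, whereas you verify the martingale property directly using the boundedness of $\sigma$ from (\hyperlink{h2'}{H.2'}) together with the a priori bound $\sup_{s\in[0,T]}\EE(\|D_s\|^2)<\infty$---both routes are valid, and you correctly identify that localization would be needed absent the boundedness assumption.
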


\begin{proof} 
  Let  $(\tau_n)_{n\in\NN}$ be the sequence defined by $ \tau_n \coloneqq \inf\{ s\geq 0 : \| X_s\| ^2 +  \|Y_s \| ^2 >n\}$, which localizes the local martingale parts of $X$ and $Y$.  We first establish a control on the running suprema of the processes.  Using the   fact that $b$ is Lipschitz we obtain
  \begin{multline*}
    \sup_{u\in[0,t\wedge\tau_n]}\|X_u\|^2 \leq 2 \|X_0\|^2 + C_T + C_T\int_0^t \sup_{u\in[0,s\wedge\tau_n]}\|X_u\|^2 \,\dd s \\ 
    + 2\sum_{i,j=1}^d \biggl(\sup_{u\in[0,t\wedge\tau_n]} \biggl|\int_0^u \sigma^{(ij)}(X_s) \,\dd B_s^{(j)}\biggr| \biggr)^2.
  \end{multline*}
  With the BDG inequality and the fact that $\sigma$ is also Lipschitz we then get
  \begin{equation*}
    \EE \biggl(  \sup_{u\in[0,t\wedge\tau_n]}\|X_u\|^2 \biggr)  \leq 2  \EE \left( \|X_0\|^2 \right)+ C_T + C_T\int_0^t   \EE \biggl(   \sup_{u\in[0,s\wedge\tau_n]}\|X_u\|^2  \biggr) \,\dd s . 
  \end{equation*}
  Applying Gronwall's lemma and then Fatou's lemma upon letting $n\to \infty$ we deduce
  \begin{align}\label{eq:controlsup2}
    \EE\biggl( \sup_{s\in[0,T]}\|X_t\|^2\biggr) \leq C_T(\EE(\|X_0\|^2) + 1),
  \end{align}
  and a similar estimate  holds for the process $Y$.  Now,   Itô's formula shows that
  \begin{multline*}
    \|X_t-Y_t\|^2 = \|X_u-Y_u\|^2 + \int_u^t2 (X_s-Y_s)^{\mathrm{t}}(b(X_s)-b(Y_s)) \,\dd s\\
    + \int_u^t 2(X_s-Y_s)^{\mathrm{t}}(\sigma(X_s)-\sigma(Y_s))\ \dd B_s +\sum_{i,j=1}^d\int_u^t(\sigma^{(ij)}(X_s)-\sigma^{(ij)}(Y_s))^2 \,\dd s.
  \end{multline*}
  The sequence $(\tau_n)_n$  localizes the local martingale on the right hand side. Taking expectation for the stopped process and using the  Lipschitz  character of  $b$ and $\sigma$  leads to
  \[\EE(\|X_{t\wedge \tau_n}-Y_{t\wedge \tau_n}\|^2)\leq \EE(\|X_{u}-Y_{u}\|^2)+C\int_{u}^{t}\EE(\|X_{s\wedge\tau_n}-Y_{s\wedge\tau_n}\|^2)\,\dd s.\]
  By dominated convergence using the bound \eqref{eq:controlsup2},  we can take  $n\to\infty$ and conclude. 
\end{proof}

Now we can state the bound leading to condition  (\hyperlink{c3}{C.3}) and  to the proof of the main result,  in the case of pure binary branching.  

\begin{lem}\label{bd:lem:orig_indepdistance} 
  There exists a constant $C_T>0$ depending on $d$ and $q$,  such that for all $K\in\NN\setminus\{0\}$ and $t\in [0,T]$: 
  \begin{equation*}
    \EE\biggl(\frac{1}{K}\sum_{n=1}^{N_t^{K}}\left\| X_t^n-Y_t^n\right\|^2\biggr) \leq C_T \int_0^t \EE\biggl(\frac{N_s^K}{K}W_2^2(\bar{\nu}_s^K,\bar{\mu}_s)\biggr) \,\dd s.
  \end{equation*}
\end{lem}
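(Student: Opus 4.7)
The plan is to apply a semimartingale decomposition to $\phi_t := \frac{1}{K}\sum_{n=1}^{N^K_t}\|X^n_t - Y^n_t\|^2$, derive a linear integral inequality in expectation, and conclude by Gronwall's lemma. Crucially, $\phi_0 = 0$ by Step~0 of algorithm (\hyperlink{a}{A}). The process $\phi$ evolves through three mechanisms: continuous diffusion between jumps; the addition of a new nonnegative summand at each birth; and the removal of a nonnegative summand at each death.

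Between two consecutive jump times $T_m < T_{m+1}$, the indices and labels are frozen and each pair $(X^n, Y^n)_{n\leq N^K_{T_m}}$ evolves as two diffusions with generator $L$ driven by the common Brownian motion $B^n = W^{j_s(n)}$. Applying Lemma~\ref{bd:lem:lipschitzdyn} to each pair (with a constant independent of $n$) and summing, the continuous contribution to $\EE(\phi_t)$ is bounded by $C_T \int_0^t \EE(\phi_s)\, ds$. Death jumps only \emph{decrease} $\phi$ (removing a nonnegative term), so they can be dropped in an upper bound. The birth contribution reads
\begin{equation*}
\int_0^t \!\!\int_{\RR_+}\!\!\int_{\RR_+} \frac{1}{K}\bigl\|X_{s-}^{\mathbf{i}(\rho)} - \Lambda^{N^K_{s-}}_s\bigl((X_{s-}^n)_{n=1}^{N^K_{s-}}, \rho\bigr)\bigr\|^2 \mathbf{1}_{\{\rho \leq N^K_{s-}\}}\mathbf{1}_{\{\theta \leq r(s)\}}\, \mathcal{N}(\dd s, \dd \rho, \dd \theta).
\end{equation*}
Its integrand is predictable (Remark~\ref{rmk:atoms}), so compensation against $\dd s\,\dd\rho\,\dd\theta$ is legitimate; integrating $\theta$ over $[0, r(s)]$ gives a factor $r(s)\leq \bar r$, and by the first property of Lemma~\ref{lem:optimal_coupling}, $\frac{1}{N_{s-}^K}\int_0^{N_{s-}^K}\bigl\|X_{s-}^{\mathbf{i}(\rho)} - \Lambda^{N^K_{s-}}_s\bigl((X_{s-}^n)_n, \rho\bigr)\bigr\|^2\, \dd\rho$ equals $W_2^2(\bar{\mu}_{s-}^K, \bar{\mu}_{s})$. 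Hence the expected birth contribution is at most $\bar r \int_0^t \EE\bigl(\frac{N_s^K}{K}W_2^2(\bar{\mu}_s^K, \bar{\mu}_s)\bigr)\, \dd s$.

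To close the loop, I would use the convexity bound $W_2^2(\bar{\mu}_s^K, \bar{\mu}_s) \leq 2 W_2^2(\bar{\mu}_s^K, \bar{\nu}_s^K) + 2 W_2^2(\bar{\nu}_s^K, \bar{\mu}_s)$, and bound the first term using the coupling $(X_s^n, Y_s^n)_{n=1}^{N_s^K}$ of $\bar{\mu}_s^K$ with $\bar{\nu}_s^K$: $W_2^2(\bar{\mu}_s^K, \bar{\nu}_s^K) \leq \frac{1}{N_s^K}\sum_{n=1}^{N_s^K} \|X_s^n - Y_s^n\|^2$. Multiplying by $N_s^K / K$ and taking expectation gives $\EE\bigl(\frac{N_s^K}{K}W_2^2(\bar{\mu}_s^K,\bar{\mu}_s)\bigr) \leq 2\EE(\phi_s) + 2\EE\bigl(\frac{N_s^K}{K}W_2^2(\bar{\nu}_s^K, \bar{\mu}_s)\bigr)$. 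Combining all contributions yields $\EE(\phi_t) \leq C_T \int_0^t \EE(\phi_s)\, \dd s + C_T \int_0^t \EE\bigl(\frac{N_s^K}{K}W_2^2(\bar{\nu}_s^K, \bar{\mu}_s)\bigr)\, \dd s$, and Gronwall's lemma together with $\phi_0 = 0$ produces the claimed estimate.

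The main technical obstacle I anticipate is the rigorous justification of the semimartingale manipulations: controlling the Brownian martingale part (from It\^o's formula applied to $\|X^n_t - Y^n_t\|^2$) and the compensated Poisson part via a suitable localization using the moment bound on $\sup_{t\in[0,T]} N_t^K/K$ provided by Lemma~\ref{lem:boundsmass1}, combined with uniform $L^2$ bounds on $\|X_t^n\|$ and $\|Y_t^n\|$ over $[0,T]$ coming from the linear growth of $b$ and boundedness of $\sigma$ under (\hyperlink{h'}{H'}). A secondary subtlety is that the re-indexing of particles after deaths in algorithm (\hyperlink{a}{A}) preserves each pair's driving Brownian motion through the labelling process $j_t(n)$, so Lemma~\ref{bd:lem:lipschitzdyn} really does apply to each surviving pair on every inter-jump interval.
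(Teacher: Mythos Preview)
Your proposal is correct and follows essentially the same approach as the paper: the paper also decomposes $\langle\eta_t^K,d_2\rangle=\phi_t$ along the jump times $(T_m)_m$, applies Lemma~\ref{bd:lem:lipschitzdyn} on each inter-jump interval (conditioning on $\mathcal{G}_t$), drops the death contribution, compensates the Poisson birth integral via Lemma~\ref{lem:optimal_coupling} to obtain the $W_2^2(\bar{\mu}_s^K,\bar{\mu}_s)$ term, splits via the triangle inequality through $\bar{\nu}_s^K$, and closes with Gronwall. The technical caveats you flag (localization, the role of the labelling $j_t(n)$ in preserving each pair's Brownian motion across re-indexing) are exactly the ones the paper handles or relies on.
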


\begin{proof}
  Consider the product empirical measure $\eta_t^K\coloneqq\frac{1}{K}\sum_{n=1}^{N_t^{K}}\delta_{(X_t^n,Y_t^n)}$ and the sequence of jump times $(T_m)_{m\in\mathbb{N}}$ of the process $(N_t^K)_{t\geq 0}$,  defined through algorithm (\hyperlink{a}{A}). We  decompose the evolution of $\eta_t^K$ in terms of  $(T_m)_{m\in\mathbb{N}}$ as  follows
  \begin{align*}
    \eta_t^K=\eta_t^K+\sum_{m=1}^\infty\bigl(\1_{t\geq T_m}\bigl(\eta_{T_m}^K-\eta_{T_{m-1}}^K\bigr)-\1_{T_{m+1}>t>T_m}\eta_{T_m}^K\bigr)+\eta_0^K.
  \end{align*}
  Defining $A_t^K\coloneqq\sum_{s\leq t}\vert \Delta N_s^{K}\vert$, where $\Delta N_s^{K}=N_s^{K}-N_{s-}^{K}$, we can rewrite the previous equality as
  \begin{align*}
    \eta_t^K    =&\ \eta_0^K+\eta_t^K-\eta_{T_{A_t^K}}^K+\sum_{m=1}^\infty \1_{ t\geq T_m }\left(\eta_{T_m}^K-\eta_{T_m^{-}}^K+\eta_{T_m^{-}}^K-\eta_{T_{m-1}}^K\right).
  \end{align*}
  The aim of this  decomposition is to control  separately what happens in between jumps and at the jump instants.
  Integrating the function $d_2(x,y) \coloneqq \| x-y\|^2$ and taking expectation yields
  \begin{equation}\label{proof:jumpdecomp}
    \begin{split}
    \EE\bigl(\bigl\langle\eta_t^K,d_2\bigr\rangle\bigr) &= \mathbb{E}\bigl(\langle\eta_0^K,d_2\rangle\bigr)+\mathbb{E}\bigg(\sum_{m=1}^\infty \1_{t\geq T_m}\Big(\langle\eta_{T_m}^K,d_2\rangle-\langle\eta_{T_m^{-}}^K,d_2\rangle\Big)\bigg) \\
    &\qquad +\mathbb{E}\bigg(\bigl\langle\eta_t^K,d_2\bigr\rangle-\Big\langle\eta_{T_{A_t^K}}^K,d_2\Big\rangle+\sum_{m=1}^\infty \1_{t\geq T_m}\left(\langle\eta_{T_m^{-}}^K,d_2\rangle-\langle\eta_{T_{m-1}}^K,d_2\rangle\right)\bigg).
    \end{split}
  \end{equation}
  By Lemma \ref{bd:lem:lipschitzdyn}, and since the evolution of $\eta_t^K$ is independent of the sigma-field  $(\mathcal{G}_t)_{t\geq0}$  on each interval $[T_{m-1}, T_m)$, we get
  \begin{align}\label{proof:betweenjumps}
    \mathbb{E}\bigl( \1_{t\geq T_m} \hspace{-0.25ex} (\langle\eta_{T_m^{-}}^K,d_2\rangle \hspace{-0.3ex} - \hspace{-0.3ex} \langle\eta_{T_{m-1}}^K,d_2\rangle\bigr) \,\big\vert\, \mathcal{G}_t \bigr) ={}& \mathbb{E}\biggl( \frac{1}{K} \hspace{-0.5ex} \sum_{n=1}^{N_{T_{m-1}}^{K}} \hspace{-0.7ex} \| X_{T_m^{-}}^n \hspace{-0.5ex} - \hspace{-0.15ex} Y_{T_m^{-}}^n\|^2 \hspace{-0.3ex} - \hspace{-0.3ex} \| X_{T_{m-1}}^n \hspace{-0.5ex} - \hspace{-0.15ex} Y_{T_{m-1}}^n\|^2 \,\bigg\vert\, \mathcal{G}_t \biggr) \1_{t\geq T_m} \nonumber\\
    \leq{}& \frac{1}{K}\sum_{n=1}^{N_{T_{m-1}}^{K}}C\int_{T_{m-1}}^{T_m^{-}}\mathbb{E}\bigl(\| X_s^n-Y_s^n\|^2\bigm\vert \mathcal{G}_t\bigr) \,\dd s  \1_{t\geq T_m}  \nonumber\\
    ={}& C\int_{T_{m-1}}^{T_m^{-}}\mathbb{E}\bigl(\langle\eta_s^K,d_2\rangle\bigm\vert \mathcal{G}_t\bigr) \,\dd s  \1_{t\geq T_m}, 
  \end{align}
  and similarly,  for the remaining time interval, 
  \begin{align*}
    \mathbb{E}\Bigl(\mathbb{E}\Bigl(\bigl\langle\eta_t^K,d_2\bigr\rangle-\big\langle\eta_{T_{\scalebox{0.6}{$A_t^K$}}}^K,d_2\big\rangle\Bigm\vert \mathcal{G}_t \Bigr)\Bigr)\leq C\int_{T_{\scalebox{0.6}{$A_t^K$}}}^t\mathbb{E}\bigl(\langle\eta_s^K,d_2\rangle\bigm\vert \mathcal{G}_t\bigr) \,\dd s.
  \end{align*}
  Recalling  Step 2 of the variant of algorithm (\hyperlink{a}{A}) used in this section,  the term involving the jumps of the processes can be written as
  \begin{align}\label{proof:jumps}
    \mathbb{E}\biggl(\sum_{m=1}^\infty & \1_{t\geq T_m} \left( \langle\eta_{T_n}^K,d_2\rangle - \langle\eta_{T_n^{-}}^K,d_2\rangle \right)\biggr) \notag\\
    &\hspace{2ex}= \mathbb{E}\Bigl(\frac{1}{K}\int_{[0,t]\times\RR_+\times\RR_+} \Bigl(\1_{\rho\leq N_{s-}^{K}} \1_{\theta\leq r(s)} \Bigl\| X_s^{N_{s}^{K}} - Y_s^{N_s^K} \Bigr\|^2 \notag\\
    &\hspace{2ex}\qquad- \1_{\rho\leq N_{s-}^{K}} \1_{r(s)<\theta\leq r(s)+c(s)} \Bigl\| X_{s-}^{\ii(\rho)}-Y_{s-}^{\ii(\rho)}\Bigr\|^2\Bigr)\,\mathcal{N}(\dd s,\dd \rho,\dd \theta)\Bigr) \notag\\
    &\hspace{2ex} \leq \mathbb{E}\Bigl( \int_{[0,t]\times\RR_+\times\RR_+}\hspace{-1ex}  \frac{1}{K} \1_{\rho\leq N_{s-}^{K}}\1_{\theta\leq r(s)} \Bigl\| X_{s-}^{\ii(\rho)} - \Lambda_{s}^{N_{s-}^K}\Big( (X_{s-}^n)_{n=1}^{N_{s-}^K} ,\rho\Big)\Bigr\|^2\,\mathcal{N}(\dd s,\dd \rho,\dd \theta)\Bigr) \notag\\
    &\hspace{2ex}= \mathbb{E}\Bigl( \int_0^t \hspace{-1ex}  \, \frac{N_{s}^K}{K} \, r(s)W_2^2\bigl(\bar{\mu}_s^K,\bar{\mu}_s\bigr)\,\dd s \Bigr),
  \end{align}
  where we used  Lemma \ref{lem:optimal_coupling} and Remark \ref{rmk:atoms} in the last equality. Since $\EE(\langle\eta_0^K,d_2\rangle)=0$,  combining the two previous estimates  and writing $C$ for some constant that may change from line to line, we deduce
  \begin{align*}
    \EE(\langle\eta_t^K,d_2\rangle) \leq{}& C \int_0^t\EE(\langle\eta_s^K,d_2\rangle) \,\dd s + \EE\biggl(\int_0^t\frac{N_{s}^{K}}{K}r(s)W_2^2\bigl(\bar{\mu}_s^K,\bar{\mu}_s\bigr) \,\dd s\biggr)\\
    \leq{}& C \int_0^t \EE(\langle\eta_s^K,d_2\rangle) \,\dd s + C \int_0^t \EE\Bigl(\frac{N_{s}^{K}}{K}W_2^2\bigl(\bar{\nu}_s^K,\bar{\mu}_s\bigr)\Bigr) \,\dd s \\
    &\qquad + C \int_0^t \EE\Bigl(\frac{N_{s}^{K}}{K}W_2^2\bigl(\bar{\mu}_s^K,\bar{\nu}_s^K\bigr)\Bigr) \,\dd s\\
    \leq{}& C \int_0^t \EE(\langle\eta_s^K,d_2\rangle) \,\dd s + C \int_0^t \EE\Bigl(\frac{N_{s}^{K}}{K}W_2^2\bigl(\bar{\nu}_s^K,\bar{\mu}_s\bigr)\Bigr) \,\dd s,
  \end{align*}
  where in the last inequality, we used the fact that
  \begin{align}\label{eq:NKW_leq_eta}
    \EE\Big(\frac{N_t^{K}}{K}W_2^2\bigl(\bar{\mu}_t^K,\bar{\nu}_t^K\bigr)\Big)\leq\EE\biggl(\frac{1}{K}\sum_{n=1}^{N_t^{K}}\| X_t^n-Y_t^n\|^2\biggr),
  \end{align}
  since $W_2^2\bigl(\bar{\mu}_t^K,\bar{\nu}_t^K\bigr)\leq  \frac{1}{N_t^{K}}\sum_{n=1}^{N_t^{K}}\| X_t^n-Y_t^n\|^2$.   We conclude by Gronwall's lemma.
\end{proof}

\begin{cor}\label{cor:improvedC3}
  Condition \textnormal{(\hyperlink{c3}{C.3})} holds under  \textnormal{(\hyperlink{h'}{H'})} with the improved bound:  $C_T  R_{d,q}(K)$. 
\end{cor}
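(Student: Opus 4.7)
The plan is to recognize that the bulk of the work has already been done in Lemma \ref{bd:lem:orig_indepdistance} and Proposition \ref{prop:condindep}; the corollary amounts to plugging the conditional independence property into the Gronwall-type bound on the coupling error. More precisely, I would start from the elementary inequality \eqref{eq:NKW_leq_eta}, namely
\[ \EE\Bigl(\frac{N_t^{K}}{K}W_2^2\bigl(\bar{\mu}_t^K,\bar{\nu}_t^K\bigr)\Bigr)\leq\EE\biggl(\frac{1}{K}\sum_{n=1}^{N_t^{K}}\| X_t^n-Y_t^n\|^2\biggr), \]
and combine it with Lemma \ref{bd:lem:orig_indepdistance} to reduce the proof of (\hyperlink{c3}{C.3}) to a uniform bound of the form $\EE\bigl(\frac{N_s^K}{K}W_2^2(\bar{\nu}_s^K,\bar{\mu}_s)\bigr) \leq C_T R_{d,q}(K)$ for $s\in[0,T]$.

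To obtain such a uniform bound, I would invoke Proposition \ref{prop:condindep}, which verifies that conditionally on $\langle \nu_s^K, 1\rangle$ the atoms of $\nu_s^K$ are i.i.d. of law $\bar{\mu}_s$. Using the identity $\bar{\mu}_s = \mu_s/\langle \mu_s, 1\rangle$ from Proposition \ref{prop:nonnonlinear}, Lemma \ref{lem:boundNW} applied with $\mu = \mu_s$ and $N = N_s^K$ gives
\[ \EE\Bigl(\frac{N_s^K}{K}W_2^2\bigl(\bar{\nu}_s^K,\bar{\mu}_s\bigr)\Bigr) \leq C_{d,q}\, M_q^{2/q}(\bar{\mu}_s)\, \EE\bigl(1\vee (N_s^K/K)\bigr)\, R_{d,q}(K). \]
Both $s$-dependent factors must then be shown to be bounded uniformly on $[0,T]$: the moment $M_q(\bar{\mu}_s)$ is controlled by the standard propagation of moments for the linear diffusion $(Y_s)$ with Lipschitz coefficients noted in Remark \ref{rmk:nonlinearprocess} b), while $\EE(N_s^K/K)$ is bounded by the first estimate of Lemma \ref{lem:boundsmass1} under the moment assumption on $\langle \mu_0^K, 1\rangle$.

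Integrating the resulting bound over $s\in[0,t]$, substituting into Lemma \ref{bd:lem:orig_indepdistance}, and returning to \eqref{eq:NKW_leq_eta} yields the claimed estimate $\EE\bigl(\frac{N_t^{K}}{K}W_2^2(\bar{\mu}_t^K,\bar{\nu}_t^K)\bigr) \leq C_T R_{d,q}(K)$. The absence of an $I_4^2(K)$ term, compared with the formulation of (\hyperlink{c3}{C.3}), is not a real obstacle: it simply reflects that algorithm (\hyperlink{a}{A}) initializes the two coupled systems identically ($Y_0^n = X_0^n = Z_0^n$), so $\langle \eta_0^K, d_2\rangle = 0$ and no initial-condition contribution enters the Gronwall inequality. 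The truly delicate step in the chain was already carried out in the proof of Lemma \ref{bd:lem:orig_indepdistance}, where the optimal transport construction of Lemma \ref{lem:optimal_coupling} is used to replace the birth-jump contribution by $r(s) W_2^2(\bar{\mu}_s^K, \bar{\mu}_s)$ (cf.\ \eqref{proof:jumps}); the corollary is a clean consequence of that bound.
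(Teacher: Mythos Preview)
Your proposal is correct and follows essentially the same approach as the paper's proof, which simply reads ``Combine inequality \eqref{eq:NKW_leq_eta} with Lemma \ref{bd:lem:orig_indepdistance} and apply then Lemma \ref{lem:boundNW}.'' You have merely made explicit the ingredients (Proposition \ref{prop:condindep}, the moment propagation of Remark \ref{rmk:nonlinearprocess} b), and the mass bound of Lemma \ref{lem:boundsmass1}) that are implicitly required to invoke Lemma \ref{lem:boundNW} uniformly in $s\in[0,T]$.
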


\begin{proof}
  Combine  inequality \eqref{eq:NKW_leq_eta} with Lemma  \ref{bd:lem:orig_indepdistance} and apply then Lemma \ref{lem:boundNW}. 
\end{proof}

We now have everything that is needed  to prove our main result in the case of pure branching diffusions.

\begin{proof}[Proof of Theorem \ref{thm:rateofconvergence} under \textnormal{(\hyperlink{h'}{H'})}]
  Since $\langle \mu_t^K,1\rangle= N_t^{K}/K $, applying Lemma \ref{lem:normwasserstein} and the triangle inequality for $W_1$ we  get 
  \begin{align}\label{eq:boundproofthm2}
    \EE\left(  \Vert\mu_t^K-\mu_t\Vert_ {{\BL}^*}  \right) &\leq    \EE\Bigl(\frac{N_t^K}{K}W_1\bigl(\bar{\nu}_t^K, \bar{\mu}_t^K \bigr)\Bigr) +    \EE\Bigl(\frac{N_t^K}{K}W_1\bigl(\bar{\nu_t}^K, \bar{\mu_t} \bigr)\Bigr)  + \EE \bigl(\bigl| \langle\mu^K_t,1\rangle - \langle\mu_t,1\rangle\bigr|\bigr) \notag  \\
    &\leq \biggl( \EE\Big(\frac{N_t^{K}}{K}W_2^2\bigl(\bar{\nu}_t^K,\bar{\mu}_t\bigr)\Big)^\frac{1}{2} +  \EE\Big(\frac{N_t^{K}}{K}W_2^2\bigl(\bar{\nu}_t^K,\bar{\mu}_t^K\bigr)\Big)^\frac{1}{2}  \biggr)  \EE\Big(\frac{N_t^{K}}{K} \Big)^\frac{1}{2} \notag \\
    &\qquad + \EE \bigl(\bigl| \langle\mu^K_t,1\rangle - \langle\mu_t,1\rangle\bigr|\bigr),
  \end{align}
  where we also used the Cauchy-Schwarz inequality and  the inequality  $W^2_1\leq W^2_2$ in the second line.
  Thanks to Lemma \ref{lem:boundsmass1} we obtain
  \[
    \EE\left(  \Vert\mu_t^K-\mu_t\Vert_ {{\BL}^*}  \right) \leq C_T  \biggl( \EE\Big(\frac{N_t^{K}}{K}W_2^2\bigl(\bar{\nu}_t^K,\bar{\mu}_t\bigr)\Big)^\frac{1}{2} +  \EE\Big(\frac{N_t^{K}}{K}W_2^2\bigl(\bar{\nu}_t^K,\bar{\mu}_t^K\bigr)\Big)^\frac{1}{2} +  I_1(K) +K^{-1/2}  \biggr).
  \]
  Now, thanks to the first bound in Lemma \ref{lem:boundsmass1}, the uniform moment control stated in Remark \ref{rmk:nonlinearprocess} b), and  conditions (\hyperlink{c1}{C.1}) and (\hyperlink{c2}{C.2}), we can apply  Lemma \ref{lem:boundNW} to  $\bar{\nu}= \bar{\nu}^K_t$, $N=N_t^K$, and $\bar{\mu}=\bar{\mu}_t$ to bound the first term in the right hand side by $R^{1/2}_{d,q}(K)$. The second term is bounded   by $C_T R^{1/2}_{d,q}(K)$, due to Corollary \ref{cor:improvedC3}. Since $ K^{-1/2} \leq R_{d,q}^{1/2}$, we conclude that
  \[
    \EE\left(  \Vert\mu_t^K-\mu_t\Vert_ {{\BL}^*}  \right) \leq C_T \Bigl( R_{d,q}(K)^{\frac{1}{2}} + I_1(K) \Bigr).
  \]
\end{proof}

\section{Proof of Theorem \ref{thm:rateofconvergence}: general case}\label{sec:bdi}

We now consider processes $(\mu_t^K)_{t\geq0}$ satisfying the  general assumptions of Theorem \ref{thm:rateofconvergence}.   We start by establishing bounds for the mass of the process,  analogous to the bounds   in Lemma \ref{lem:boundsmass1}. The convergence bound is less sharp and more difficult to establish now because of the nonlinearities coming from  the interaction.

\begin{lem}\label{lem:boundsmass}
  For each $T>0$  and $p\geq 1$ there is a constant $C_{T,p}>0$ such that  
  \[\sup_{K\in\NN\setminus\{0\}}  \EE\biggl(\sup_{t\in [0,T]}\langle\mu_t^{K},1\rangle^p\biggr)<C_{T,p} \sup_{K\in\NN\setminus\{0\}}\EE(\langle\mu_0^{K},1\rangle^p).\]
  Moreover, if   $\sup_{K\in\NN\setminus\{0\}}\EE(\langle\mu_0^{K},1\rangle^4)<\infty$,  for all $T>0$ we have
  \[ \EE \Bigl( \bigl(  \langle\mu^K_t,1\rangle - \langle\mu_t,1\rangle\bigr)^4\Bigr)\leq C_T  \bigr(  I_4^4(K) +K^{-1} \bigl) .\]
\end{lem}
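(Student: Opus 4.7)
For the first (moment) estimate, the plan is to mimic the strategy of Lemma \ref{lem:boundsmass1}: writing $N^K_t$ via the Poisson point measure $\mathcal{N}$ of Algorithm (\hyperlink{a}{A}),
\[
  N^K_t \leq N^K_0 + \int_{[0,t]\times \RR_+^2} \1_{\rho \leq N^K_{s-}} \1_{\theta \leq r}\, \mathcal{N}(\dd s, \dd\rho, \dd\theta),
\]
one raises to the $p$-th power (keeping only the positive-jumps part, which dominates the total mass), applies the BDG inequality to the compensated martingale, and closes by Gronwall. This is essentially the argument of \cite[Lemma 3.3]{FM2015} and the first part of Lemma \ref{lem:boundsmass1}, and the nonlinearity in the death rate $c N^K_s/K$ plays no role here since deaths only decrease $N^K_t$.

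For the second estimate, set $D_t \coloneqq \langle \mu^K_t,1\rangle - \langle \mu_t,1\rangle = N^K_t/K - n_t$. From the Poisson representation and equation \eqref{eq:logisticODE}, $D$ is a semimartingale of the form
\[
  D_t = D_0 + \int_0^t a_s\, D_s\, \dd s + \frac{M^K_t}{K}, \qquad a_s \coloneqq r - c\bigl(N^K_s/K + n_s\bigr),
\]
where $M^K$ is the compensated Poisson martingale with jumps $\pm 1$. The crucial pointwise observation is that $a_s \leq r$ almost surely (since $c,n_s, N^K_s/K \geq 0$), which will allow Gronwall to be closed in $L^4$ without exponential-moment estimates for the possibly large coefficient $a_s$.

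The plan is to apply Itô's formula to the semimartingale $D$ with $f(x)=x^4$, take expectations (after a standard localization argument that I would only sketch), and obtain
\[
  \EE(D_t^4) = \EE(D_0^4) + 4\int_0^t \EE(D_s^4 a_s)\, \dd s + \EE\!\left(\sum_{s\leq t}\Bigl[6 D_{s-}^2 (\Delta D_s)^2 + 4 D_{s-} (\Delta D_s)^3 + (\Delta D_s)^4\Bigr]\right),
\]
the stochastic integral against $\dd M^K_s/K$ vanishing in expectation. Using $a_s \leq r$ and $D_s^4 \geq 0$, the continuous drift satisfies $4\EE(D_s^4 a_s) \leq 4r\, \EE(D_s^4)$. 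For the jump sum, I would compensate each term using the intensity $(r + cN^K_s/K)N^K_s$ of the jump measure and exploit the fact that $|\Delta D_s|^k = K^{-k}$. The quadratic term then yields a contribution bounded by $\frac{C}{K}\int_0^t \EE\!\bigl((r+cN^K_s/K)(N^K_s/K)\, D_s^2\bigr)\dd s$, which by Cauchy-Schwarz together with the already-established uniform moment bound for $N^K_s/K$ and the a priori bound $\EE(D_s^4) \leq C_T$ (obtained once more from Part 1 via $D_s^4 \leq 8((N_s^K/K)^4 + n_s^4)$) is at most $C_T/K$; the cubic and quartic jump terms are of order $K^{-2}$ and $K^{-3}$ respectively. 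Collecting terms,
\[
  \EE(D_t^4) \leq \EE(D_0^4) + 4r \int_0^t \EE(D_s^4)\, \dd s + C_T\, K^{-1},
\]
and Gronwall's lemma yields $\EE(D_t^4) \leq C_T(I_4^4(K) + K^{-1})$, as claimed.

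The main obstacle is closing the $L^4$ Gronwall inequality despite the random, nonlinear and potentially unbounded coefficient $a_s$ coming from the logistic competition; the key trick is that because $a_s \leq r$ deterministically, one does not need Hölder-type splittings $\EE(D_s^4 a_s^4)$ (which would not close the recursion) — the sign structure alone is enough. The second delicate point is verifying that all compensated jump-correction terms are of the right order $O(K^{-1})$, which is where the uniform moment bounds from the first part of the lemma enter in an essential way.
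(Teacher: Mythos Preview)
Your proposal is correct and follows essentially the same route as the paper: Itô's formula applied to $D_t^4$, the sign observation $a_s\leq r$ (equivalently, dropping the negative logistic contribution) to close the drift, and moment bounds from Part~1 to control the compensated jump corrections of order $K^{-1}$. The one structural difference is that the paper first runs the same Itô--Gronwall argument for $D_t^2$ to obtain an intermediate bound $\EE(D_t^2)\leq C_T(I_2^2(K)+K^{-1})$, and then uses this $L^2$ estimate to control the quadratic jump term $\frac{C}{K^2}\int_0^t\EE(D_s^2)\,\dd s$ in the $L^4$ computation; you instead bound that term directly via Cauchy--Schwarz and the crude a~priori estimate $\EE(D_s^4)\leq C_T$ coming from Part~1. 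Your shortcut is valid and slightly more economical; the paper's two-step version additionally delivers the $L^2$ convergence rate as a byproduct, though it is not used elsewhere.
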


\begin{proof}%[Proof of Lemma \ref{lem:boundsmass}]
  For the first bound on the moments of the total mass we refer to \cite[Lemma 3.3]{FM2015}.  To prove the convergence bound in the second part,  we resort to algorithm (\hyperlink{a}{A}) to represent the dynamics of the number of particles by the SDE
  \begin{align*}
    N_t^K ={}& N_0^K + \int_0^t\int \1_{\rho\leq N_{s-}^K} \bigg(\1_{\theta\leq r} - \1_{r < \theta \leq r+c\frac{N_{\scalebox{0.5}{$s-$}}^{\scalebox{0.5}{$K$}}}{K}}\bigg)\ \mathcal{N}(\dd s,\dd\rho, \dd\theta) \\
    ={}& N_0^K + \int_0^t \bigg(r-c\frac{N_s^K}{K}\bigg)N_s^K \,\dd s + M_t^K. 
  \end{align*}
  Notice that the process $(M_t^K)_{t\geq0}$ is a  martingale since,    for all $t\geq 0$, 
  \[ \EE \biggl(\int_0^t\int_{\mathbb{R}_+}\int_{\mathbb{R}_+} \biggl| \1_{\rho\leq N_{s}^{K}}\biggl(\1_{\theta\leq r}-\1_{r < \theta \leq r+c\frac{N_{\scalebox{0.5}{$s-$}}^{\scalebox{0.5}{$K$}}}{K}}\biggr)\biggr| \,\dd s\, \dd \rho  \, \dd \theta \biggr) \leq ( r+c )\EE \biggl( \int_0^t  (N^K_s)^2  \,\dd s \biggr)<\infty, \]
  by the previous part and the assumptions on the total mass of the system. The limiting mass in turn satisfies the dynamics
  \begin{align*}
    \langle\mu_t,1\rangle = \langle\mu_0,1\rangle + \int_0^t (r - c\langle\mu_s,1\rangle)\langle\mu_s,1\rangle \,\dd s. 
  \end{align*}
  We will first establish an $L^2$ convergence bound for the total mass. Using It\^o's formula  we get 
  \begin{multline*}
    \bigg( \frac{N_t^{K}}{K}-\langle\mu_t,1\rangle \bigg)^2 = \bigg( \frac{N_0^{K}}{K}-\langle\mu_0,1\rangle \bigg)^2 + \int_0^t 2 \bigg(\frac{N_{s-}^K}{K}-\langle\mu_{s-},1\rangle\bigg)\ \dd \Bigl(\frac{M_s^K}{K}\Bigr)\\
    \hspace{22ex} + \int_0^t \bigg[2r\bigg( \frac{N_s^{K}}{K}-\langle\mu_s,1\rangle \bigg)^2 - \bigg( \frac{N_s^{K}}{K}-\langle\mu_s,1\rangle \bigg)^2\bigg( \frac{N_s^{K}}{K}+\langle\mu_s,1\rangle \bigg) \bigg] \,\dd s \\
    \hspace{30ex} + \int_0^t\int \1_{\rho\leq N_{s-}^K}\1_{r < \theta \leq r+c\frac{N_{\scalebox{0.5}{$s-$}}^{\scalebox{0.5}{$K$}}}{K}} \bigg(\frac{1}{K}\bigg)^2 \,\mathcal{N}(\dd s,\dd\rho, \dd\theta) \\
    + \int_0^t\int \1_{\rho\leq N_{s-}^K} \1_{\theta\leq r} \bigg(\frac{1}{K}\bigg)^2 \,\mathcal{N}(\dd s,\dd\rho, \dd\theta). 
  \end{multline*}
  Bounding above the negative term in the second line by 0 gives us 
  \begin{align}\label{eq:boundsquareevol}
    \hspace{-0.7em}\bigg( \frac{N_t^{K}}{K}-\langle\mu_t,1\rangle \bigg)^2 &\leq \bigg( \frac{N_0^{K}}{K}-\langle\mu_0,1\rangle \bigg)^2 + \int_0^t 2r\bigg( \frac{N_s^{K}}{K}-\langle\mu_s,1\rangle \bigg)^2 \,\dd s + \int_0^t \frac{r}{K}\biggl(\frac{N_s^K}{K}\biggr) \,\dd s \notag\\
    &\quad\ \ + \int_0^t \frac{c}{K}\biggl(\frac{N_s^K}{K}\biggr)^2 \,\dd s  + \int_0^t 2 \bigg(\frac{N_{s-}^K}{K}-\langle\mu_{s-},1\rangle\bigg) \,\dd \Bigl(\frac{M_s^K}{K}\Bigr) + \bar{M}_t^K + \tilde{M}_t^K  \notag\\
    &\leq \bigg( \frac{N_0^{K}}{K}-\langle\mu_0,1\rangle \bigg)^2 + \int_0^t 2r\bigg( \frac{N_s^{K}}{K}-\langle\mu_s,1\rangle \bigg)^2 \,\dd s + \frac{rT}{K}\sup_{s\in[0,T]}\langle\mu_s^K,1\rangle \notag \\
    &\quad\ \ + \frac{cT}{K}\sup_{s\in[0,T]}\langle\mu_s^K,1\rangle^2 + \int_0^t 2 \bigg(\frac{N_{s-}^K}{K}-\langle\mu_{s-},1\rangle\bigg) \,\dd \Bigl(\frac{M_s^K}{K}\Bigr) + \bar{M}_t^K + \tilde{M}_t^K,
  \end{align}
  where $(\bar{M}_t^K)_{t\geq0}$ and $(\tilde{M}_t^K)_{t\geq0}$ are compensated Poisson integrals. Let now  $(\tau_m)_m$ be the sequence of stopping times defined by $\tau_m = \inf\{t>0 : \overline{N}_t^K >m \} $ for $m\geq1$ and $\tau_0=0$. Since $\overline{N}_{s}^K$ is increasing  by one and $\overline{N}_{r-}^K\geq m+1 =  \overline{N}_{\tau_m}^K> \overline{N}_{s-}^K $ for all $r>\tau_m\geq s$,  we have
    \begin{align*}
    \int_0^{t\wedge\tau_m} 2 \bigg(\frac{N_{s-}^K}{K}-\langle\mu_{s-},1\rangle\bigg) \,\dd \Bigl(\frac{M_s^K}{K}\Bigr) 
    ={}& 2 \int_0^{t} \1_{\{\overline{N}_{s-}^K\leq m\}} \bigg(\frac{N_{s-}^K}{K}-\langle\mu_{s-},1\rangle\bigg) \,\dd \Bigl(\frac{M_s^K}{K}\Bigr)  \\
     ={}& 2 \int_0^t\int \phi(s,\rho,\theta) \tilde{\mathcal{N}}(\dd s,\dd\rho,\dd\theta),
  \end{align*}
  with $\tilde{\mathcal{N}}$  the compensated measure associated with $\mathcal{N}$ and $\phi$ the predictable process
  \begin{align*}
    \phi(s,\rho,\theta) = \1_{\overline{N}_{s-}^K\leq m}\1_{\rho\leq N_{s-}^K} \frac{1}{K}\bigg(\1_{\theta\leq r} - \1_{r < \theta \leq r+c\frac{N_{\scalebox{0.5}{$s-$}}^{\scalebox{0.5}{$K$}}}{K}}\bigg)\bigg(\frac{N_{s-}^K}{K}-\langle\mu_{s-},1\rangle\bigg).
  \end{align*}
  The inequality  $ N_s^K\leq \overline{N}_{s}^K$ implies that
    \begin{align*}
    \EE\biggl( \int_0^t\int_0^{\infty} \int_0^{\infty}  |\phi(s,\rho,\theta)| \,\dd s \, \dd\rho \, \dd\theta \biggr) \leq{}& \EE\biggl(  \int_0^t \1_{\overline{N}_{s}^K\leq m}(s) \frac{N_s^K}{K}\Bigl(r+c\frac{N_s^K}{K}\Bigr)\Bigl(\frac{N_s^K}{K}+\langle\mu_s,1\rangle\Bigr) \,\dd s \biggr) \\
    \leq{}& \EE\biggl( \int_0^t  \frac{m}{K}\Bigl(r+c\frac{m}{K}\Bigr)\Bigl(\frac{m}{K}+\langle\mu_s,1\rangle\Bigr) \,\dd s \biggr) \\
    \leq{}& C_{T,K,m} \biggl( 1+\sup_{s\in[0,T]}\langle\mu_s,1\rangle \biggr),
  \end{align*}
  and so the integral w.r.t.  $\dd \Bigl(\frac{M_s^K}{K}\Bigr) $ in \eqref{eq:boundsquareevol} is a martingale. By  similar reasonings, the stopped processes  $(\bar{M}_{t\wedge\tau_m} ^K)_{t\geq0}$ and $(\tilde{M}_{t\wedge\tau_m}^K)_{t\geq0}$ are also seen to be martingales. Taking expectation in  \eqref{eq:boundsquareevol} we get 
  \begin{align*}
    \EE\biggl(\hspace{-0.2ex}\bigg( \frac{N_{t\wedge\tau_m}^{K}}{K}-\langle\mu_{t\wedge\tau_m},1\rangle \bigg)^{\hspace*{-0.5ex}2}\biggr) \leq{}& \EE\biggl(\hspace{-0.2ex}\bigg( \frac{N_0^{K}}{K}-\langle\mu_0,1\rangle \bigg)^{\hspace*{-0.5ex}2}\biggr) + \EE\biggl(\int_0^{t\wedge\tau_m}\hspace{-0.5ex} 2r\bigg( \frac{N_s^{K}}{K}-\langle\mu_s,1\rangle \bigg)^{\hspace*{-0.5ex}2} \,\dd s\biggr) + \frac{C_T}{K} \\
    \leq{}&\EE\biggl(\hspace{-0.2ex}\bigg( \frac{N_0^{K}}{K}-\langle\mu_0,1\rangle \bigg)^{\hspace*{-0.5ex}2}\biggr) + \int_0^{t}\hspace{-0.5ex} 2r\EE\biggl(\hspace{-0.2ex}\bigg( \frac{N_{s\wedge\tau_m}^{K}}{K}-\langle\mu_{s\wedge\tau_m},1\rangle \bigg)^{\hspace*{-0.5ex}2}\biggr) \,\dd s + \frac{C_T}{K}.
  \end{align*}
  Using Gronwall's lemma we obtain
  \begin{equation}\label{eq:stopgronwall}
    \EE\biggl(\bigg( \frac{N_{t\wedge\tau_m}^{K}}{K}-\langle\mu_{t\wedge\tau_m},1\rangle \bigg)^2\biggr) \leq \biggl(\EE\biggl(\bigg( \frac{N_0^{K}}{K}-\langle\mu_0,1\rangle \bigg)^2\biggr) + \frac{C_T}{K} \biggr)e^{2rT}. 
  \end{equation}
  By Fatou's lemma,  we then get $\EE \Bigl( \bigl(  \langle\mu^K_t,1\rangle - \langle\mu_t,1\rangle\bigr)^2\Bigr)\leq C_T  \bigr(  I_2^2(K) +K^{-1} \bigl)$, but the bound \eqref{eq:stopgronwall} will be more practical for our purposes.
  Let us now address the $L^4$ bound.  Applying It\^o's formula again we get
   \begin{align*}
    \bigg( \frac{N_t^K}{K} - \langle\mu_t,1\rangle \bigg)^4 ={}& \bigg( \frac{N_0^{K}}{K}-\langle\mu_0,1\rangle \bigg)^4 + \int_0^t4\bigg(\frac{N_{s^{-}}^K}{K} - \langle\mu_{s^{-}},1\rangle\bigg)^3 \,\dd \biggl(\frac{M_s^K}{K}\biggr) \\
    &{} + \int_0^t \bigg[4r\bigg( \frac{N_s^{K}}{K}-\langle\mu_s,1\rangle \bigg)^4 - 4\bigg( \frac{N_s^{K}}{K}-\langle\mu_s,1\rangle \bigg)^4\bigg( \frac{N_s^{K}}{K}+\langle\mu_s,1\rangle \bigg) \bigg] \,\dd s \\
    {}&\hspace{5ex} + \int_0^t\int \1_{\rho\leq N_{s-}^K} \1_{\theta\leq r} \bigg[ \bigg( \frac{N_{s-}^{K}}{K}-\langle\mu_{s-},1\rangle +\frac{1}{K}\bigg)^4 \\
    {}& \hspace{10ex}- \bigg( \frac{N_{s-}^{K}}{K}-\langle\mu_{s-},1\rangle \bigg)^4
    - 4\bigg( \frac{N_{s-}^{K}}{K}-\langle\mu_{s-},1\rangle \bigg)^3\frac{1}{K}\bigg] \mathcal{N}(\dd s,\dd\rho, \dd\theta) \\
    {}&\hspace{5ex} + \int_0^t\int \1_{\rho\leq N_{s-}^K}\1_{r < \theta \leq r+c\frac{N_{\scalebox{0.5}{$s-$}}^{\scalebox{0.5}{$K$}}}{K}}  \bigg[\bigg( \frac{N_{s-}^{K}}{K}-\langle\mu_{s-},1\rangle -\frac{1}{K}\bigg)^4 \\
    {}& \hspace{10ex} - \bigg( \frac{N_{s-}^{K}}{K}-\langle\mu_{s-},1\rangle \bigg)^4 + 4\bigg( \frac{N_{s-}^{K}}{K}-\langle\mu_{s-},1\rangle \bigg)^3\frac{1}{K} \bigg] \mathcal{N}(\dd s,\dd\rho, \dd\theta).
  \end{align*}
  Bounding the negative term in the second line by 0 and compensating the Poisson integrals gives us  \begin{align*}
    \bigg( \frac{N_t^K}{K} - \langle\mu_t,1\rangle \bigg)^4 \leq{}& \bigg( \frac{N_0^{K}}{K}-\langle\mu_0,1\rangle \bigg)^4 + \int_0^t 4r\bigg( \frac{N_s^{K}}{K}-\langle\mu_s,1\rangle \bigg)^4 \,\dd s \\
    {}& + \int_0^{t} r N_s^K \bigg( 6\bigg( \frac{N_s^{K}}{K}-\langle\mu_s,1\rangle \bigg)^2\frac{1}{K^2} + 4\bigg( \frac{N_s^{K}}{K}-\langle\mu_s,1\rangle \bigg)\frac{1}{K^3} + \frac{1}{K^4} \bigg) \,\dd s \\
    {}&\hspace{0.5ex} + \int_0^{t} cN_s^K\frac{N_s^K}{K}\bigg( 6\bigg( \frac{N_s^{K}}{K}-\langle\mu_s,1\rangle \bigg)^2\frac{1}{K^2} - 4\bigg( \frac{N_s^{K}}{K}-\langle\mu_s,1\rangle \bigg)\frac{1}{K^3} + \frac{1}{K^4} \bigg) \,\dd s \\
    {}&\hspace{27ex} + \int_0^t4\bigg(\frac{N_{s^{-}}^K}{K} - \langle\mu_{s^{-}},1\rangle\bigg)^3 \,\dd \biggl(\frac{M_s^K}{K}\biggr) + R_t^K + \bar{R}_t^K,
  \end{align*}
  where $(R_t^K)_{t\geq0}$ and $(\bar{R}_t^K)_{t\geq0}$ are compensated Poisson integrals. Using Young's inequality we deduce that
  \begin{multline}\label{eq:boundfourthevol}
    \biggl( \frac{N_t^K}{K} - \langle\mu_t,1\rangle \biggr)^4 \leq \biggl( \frac{N_0^{K}}{K}-\langle\mu_0,1\rangle \biggr)^4 \hspace{-0.3ex}+ C\int_0^t \biggl( \frac{N_s^{K}}{K}-\langle\mu_s,1\rangle \biggr)^4 \,\dd s + \frac{C}{K^2} \int_0^t \biggl( \frac{N_s^{K}}{K}-\langle\mu_s,1\rangle \biggr)^2 \,\dd s \\
    \hspace{25ex} + \frac{C_T}{K^3}\sup_{s\in[0,T]}\langle\mu_s^K,1\rangle + \frac{C_T}{K}\sup_{s\in[0,T]}\langle\mu_s^K,1\rangle^2 + \frac{C_T}{K}\sup_{s\in[0,T]}\langle\mu_s^K,1\rangle^4 \\
    + \int_0^t4\biggl(\frac{N_{s^{-}}^K}{K} - \langle\mu_{s^{-}},1\rangle\biggr)^3 \,\dd \biggl(\frac{M_s^K}{K}\biggr) + R_t^K + \bar{R}_t^K.
  \end{multline}
  Proceeding in a similar way as in the proof of the bound  \eqref{eq:stopgronwall}, we can verify again  that the three processes in the last line are martingales if stopped  at $\tau_m = \inf\{t>0 : \overline{N}_t^K >m \}$. Thus, stopping the inequality \eqref{eq:boundfourthevol} and  taking expectation yields
  \begin{equation*}
    \begin{split}
    \EE\biggl(\bigg( \frac{N_{t\wedge\tau_m}^K}{K} - \langle\mu_{t\wedge\tau_m},1\rangle \bigg)^4\biggr)  &\leq I_4^4(K) + \frac{C_T}{K} + C\int_0^{t} \EE\biggl(\bigg( \frac{N_{s\wedge\tau_m}^{K}}{K}-\langle\mu_{s\wedge\tau_m},1\rangle \bigg)^4\biggr) \,\dd s \\
    &\qquad + \frac{C}{K^2} \int_0^{t} \EE\biggl(\bigg( \frac{N_{s\wedge\tau_m}^{K}}{K}-\langle\mu_{s\wedge\tau_m},1\rangle \bigg)^2\biggr) \,\dd s \\
    &\leq I_4^4(K) + \frac{C_T}{K} + C\int_0^{t} \EE\biggl(\bigg( \frac{N_{s\wedge\tau_m}^{K}}{K}-\langle\mu_{s\wedge\tau_m},1\rangle \bigg)^4\biggr) \,\dd s \\
    &\qquad + \frac{C_TT}{K^2}\Bigl(I_2^2(K) + \frac{1}{K} \Bigr),
    \end{split}
  \end{equation*}
  where we used \eqref{eq:stopgronwall} to obtain the second inequality.  Gronwall's inequality  and then  Fatou's lemma  yield at last
  \begin{equation*}
    \EE \bigg( \bigg( \frac{N_{t}^{K}}{K}-\langle\mu_{t},1\rangle \bigg)^4 \bigg) \leq{}   I_4^4(K) +  C_T\biggl( \frac{1}{K} +  \frac{I_2^2(K)}{K^2}  \biggr), 
  \end{equation*}
  and we obtain the asserted bound noting that $I_2^2(K)\leq \sqrt{ I_4^4(K)}\leq  1+ I_4^4(K)$.
\end{proof}

We prove now Proposition \ref{prop:nonlinearprocess}, which relates the solution $(\mu_t)_{t\geq0}$ of equation \eqref{bdi:limiteq} to a nonlinear process of McKean-Vlasov type.

\begin{proof}[Proof of Proposition \ref{prop:nonlinearprocess}]
  Pathwise existence and uniqueness for the SDE \eqref{eq:nonlindiffusion} comes from the fact  that the coefficients are Lipschitz functions. In order to characterize the flow of time-marginal laws of $(Y_t)_{t\geq0}$,  consider a function $f \in C^{1,2}([0,T]\times\RR^d)$ satisfying the conditions in Theorem \ref{thm:largepopulationlimit}. By Itô's formula we obtain
  \begin{multline*}
    f(t,Y_t) = f(0,Y_0) + \int_0^t \frac{\partial f(s,Y_s)}{\partial s} \,\dd s +\int_0^t \nabla f(s,Y_s)^{\mathrm{t}}b(Y_s, H*\mu_s(Y_s)) \,\dd s \\
    + \int_0^t \nabla f(s,Y_s)^{\mathrm{t}}\sigma(Y_s, G*\mu_s(Y_s)) \,\dd W_s + \frac{1}{2}\int_0^t \mathrm{Tr}(a(Y_s,G*\mu_s(Y_s))\mathrm{Hess}f(s,Y_s)) \,\dd s.
  \end{multline*}
  Taking  expectation shows  that the law of the time-marginal is a weak solution of equation \eqref{bdi:normalizedeq} with respect to that set of test functions.  Now, consider the function $h(t,x)= \langle\mu_t,1\rangle f(t,x)$. By equation \eqref{bdi:normalizedeq} we get
  \begin{align*}
    \langle\bar{\mu}_t ,h(t, \cdot)\rangle ={}& \langle\bar{\mu}_0,h(0,\cdot)\rangle + \int_0^t \langle\bar{\mu}_s,\partial_sh(s,\cdot)+ L_{\mu_s}h(s,\cdot) \rangle \,\dd s\\
    ={}& \langle\langle\mu_0,1\rangle\bar{\mu}_0,f(0,\cdot)\rangle + \int_0^t\langle \bar{\mu}_s , f(s,\cdot)\partial_s\langle\mu_s,1\rangle + \langle\mu_s,1\rangle\partial_s f(s,\cdot) + \langle\mu_s,1\rangle L_{\mu_s}f(s,\cdot)\rangle \,\dd s\\
    ={}&  \langle\langle\mu_0,1\rangle\bar{\mu}_0,f(0,\cdot)\rangle + \int_0^t\langle \langle\mu_s,1\rangle\bar{\mu}_s , \partial_s f(s,\cdot) + L_{\mu_s}f(s,\cdot) + (r-c\langle\mu_s,1\rangle)f(s,\cdot)\rangle \,\dd s,
  \end{align*}
  which implies that $ (\tilde{\mu}_t)_{t\geq 0}  \coloneqq (\langle\mu_t,1\rangle\bar{\mu}_t )_{t\geq 0}$ satisfies the  following ``linearized'' version of equation \eqref{crossdiffeq}
  \[ \left\langle\tilde{\mu}_t,f(t,\cdot)\right\rangle=\left\langle\mu_0,f(0,\cdot)\right\rangle+\int_0^t\left\langle\tilde{\mu}_s,\partial_s f(s,\cdot)+L_{\mu_s}f(s,\cdot)+(r-c\langle\mu_s,1\rangle)f(s,\cdot)\right\rangle\,\dd s. \]
  With similar (indeed simpler) arguments as in the uniqueness part of Theorem \ref{thm:largepopulationlimit} (see \cite[Section 4]{FM2015}) one can show that uniqueness of weak solutions (with respect to the same class of test functions)   of this equation holds.  Since    $ (\tilde{\mu}_t )_{t\geq 0}=  (\mu_t )_{t\geq 0}$ also is a solution, we deduce that $\langle\mu_t,1\rangle\bar{\mu}_t = \mu_t$ for all $t\geq0$. 

  The previous identity yields  $\langle\mu_t,f\rangle = \EE(\langle\mu_t,1\rangle f(Y_t))$ for every bounded measurable $f$, and  the fact that $(\langle\mu_t,1\rangle)_{t\geq0}$ is the unique solution of equation \eqref{eq:logisticODE} is readily obtained by taking $f=1$ in Theorem \ref{thm:largepopulationlimit},  recalling also that the local Lipschitz character of the ODE's coefficient ensures  uniqueness for it.  
\end{proof}

The following propagation of moments result for the unique solution of equation \eqref{bdi:normalizedeq} will be needed. 
\begin{lem}\label{lem:boundsmom}
  For each $T>0$  and $q\geq 2$ there is a constant $C'_{T}>0$ such that  
  \[ \sup_{t\in [0,T]} M_q(\bar{\mu}_t) < C'_T (1+M_q(\bar{\mu}_0)). \]
\end{lem}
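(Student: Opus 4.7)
The plan is to obtain the bound by working with the stochastic representation $M_q(\bar{\mu}_t) = \EE(|Y_t|^q)$ provided by Proposition \ref{prop:nonlinearprocess}, and applying It\^o's formula together with Gronwall's lemma to the nonlinear SDE \eqref{eq:nonlindiffusion}. The essential observation is that, although the coefficients of \eqref{eq:nonlindiffusion} depend on $\mu_s$, this dependence is harmless on $[0,T]$: since $H$ and $G$ are bounded by (\hyperlink{h3}{H.3}) and $\langle \mu_s,1\rangle=n_s$ is the solution of the logistic ODE \eqref{eq:logisticODE}, hence bounded on $[0,T]$, we have $\|H*\mu_s\|_\infty + \|G*\mu_s\|_\infty \leq C_T$ uniformly in $s\in[0,T]$. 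Combined with (\hyperlink{h2}{H.2}), this yields the uniform linear growth estimates
\[ |b(y,H*\mu_s(y))|\leq C_T(1+|y|), \qquad |\sigma(y,G*\mu_s(y))|\leq C_T, \]
for all $y\in\RR^d$ and $s\in[0,T]$.

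First I would apply It\^o's formula to $\phi(y)=(1+|y|^2)^{q/2}$ (which is $C^2$ and avoids the non-smoothness of $y\mapsto |y|^q$ at the origin for non-even $q$), yielding the standard expression for $\phi(Y_t)$ in terms of $\nabla \phi$, $\mathrm{Hess}\,\phi$, a drift integral and a stochastic integral. Using the pointwise estimates $|\nabla\phi(y)|\leq q(1+|y|^2)^{(q-1)/2}$ and $|\mathrm{Hess}\,\phi(y)|\leq C_q(1+|y|^2)^{(q-2)/2}$ together with the linear growth bounds above, one obtains
\[ \phi(Y_t)\leq \phi(Y_0)+C_T\int_0^t\phi(Y_s)\,\dd s+\mathcal{M}_t, \]
where $\mathcal{M}_t$ is a local martingale.

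Second, I would localize by the sequence $\tau_n \coloneqq \inf\{t\geq 0 : |Y_t|\geq n\}$, so that $(\mathcal{M}_{t\wedge\tau_n})_{t\geq0}$ is a true martingale. Taking expectation in the stopped inequality and applying Gronwall's lemma gives
\[ \EE(\phi(Y_{t\wedge\tau_n}))\leq e^{C_T t}\EE(\phi(Y_0)), \]
uniformly in $n$. Fatou's lemma upon letting $n\to\infty$ then yields $\sup_{t\in[0,T]}\EE(\phi(Y_t))\leq e^{C_T T}\EE(\phi(Y_0))$, which together with the equivalence $|y|^q\leq \phi(y)\leq 2^{q/2-1}(1+|y|^q)$ gives the required bound $\sup_{t\in[0,T]}M_q(\bar{\mu}_t)\leq C'_T(1+M_q(\bar{\mu}_0))$.

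This argument is essentially routine; the only minor care needed is to verify that the localization indeed produces a true martingale (using the uniform boundedness of $\sigma$ and the linear growth of $\nabla\phi$), and to keep track of the $T$-dependence of the constant through $n_s$. No step poses a real obstacle.
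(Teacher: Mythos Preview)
Your proposal is correct and follows essentially the same route as the paper: represent $M_q(\bar\mu_t)=\EE(|Y_t|^q)$ via Proposition~\ref{prop:nonlinearprocess}, apply It\^o's formula, exploit that $H*\mu_s$ and $G*\mu_s$ are uniformly bounded on $[0,T]$ to get linear-growth coefficients, localize, take expectation, apply Gronwall, and pass to the limit by Fatou. The only cosmetic differences are that the paper works directly with $|Y_t|^q$ (which is $C^2$ for $q\geq 2$) rather than $(1+|Y_t|^2)^{q/2}$, and that you use the sharper observation from (\hyperlink{h2}{H.2}) that $\sigma$ is actually \emph{uniformly bounded} (not just of linear growth), which the paper does not exploit; either bound suffices.
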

 
\begin{proof}
  We will use the fact that diffusion process $(Y_t)_{t\geq 0}$   studied in  Proposition \ref{prop:nonlinearprocess} satisfies $\EE( \|Y_t\|^q)=  M_q(\bar{\mu}_t)$.  Applying Itô's formula to $\|Y_t\|^q$ for $q\geq2$ yields 
  \begin{multline}\label{proof:itoformulaq}
    \|Y_t\|^q = \|Y_0\|^q + \int_0^t q\|Y_s\|^{q-2} Y_s^{\mathrm{t}} b(Y_s,H*\mu_s(Y_s)) \,\dd s + \int_0^t q\|Y_s\|^{q-2} Y_s^{\mathrm{t}} \sigma(Y_s,G*\mu_s(Y_s)) \,\dd B_s \\
    + \frac{1}{2}\sum_{i,j=1}^d\sum_{k=1}^d \int_0^t \Bigl(q(q-2)\|Y_s\|^{q-4}|Y_s^{(i)}||Y_s^{(j)}|+\delta_{ij}\|Y_s\|^{q-2}\Bigr) \\
    \times\sigma^{(ik)}(Y_s,G*\mu_s(Y_s))\sigma^{(jk)}(Y_s,G*\mu_s(Y_s))) \,\dd s.
  \end{multline}
  Since $b$ is Lipschitz we have $ \|b(Y_s,H*\mu_s(Y_s))\| \leq C\bigl(1+\|Y_s\|+|H*\mu_s(Y_s)|\bigr)$ 
  with  $ | H*\mu_s(Y_s)| = |\int H(x-Y_s)\mu_s(\dd x) | \leq \|H\|_\infty\sup_{t\in[0,T]}|\langle\mu_s,1\rangle|$ and similarly for $\sigma$ and $G$. We thus get that
  \[ \|b(Y_s,H*\mu_s(Y_s))\|\leq C\bigl(1+\|X_s\|\bigr) \text{ and } \|\sigma(X_s,G*\mu_s(X_s))\|\leq C\bigl(1+\|X_s\|\bigr).\]
  Using this in \eqref{proof:itoformulaq} gives us the bound
  \begin{multline*}
    \|Y_t\|^q \leq \|Y_0\|^q + C\int_0^t \|Y_s\|^{q-2} \,\dd s + C\int_0^t \|Y_s\|^{q-1} \,\dd s + C\int_0^t \|Y_s\|^{q} \,\dd s \\
    + \int_0^t q\|Y_s\|^{q-2} Y_s^{\mathrm{t}} \sigma(Y_s,G*\mu_s(Y_s)) \,\dd B_s.
  \end{multline*}
  Let now $(\tau_n)_{n\in\NN}$ be a localizing sequence for the local martingale in the right hand side. Taking expectation of the stopped process yields
  \begin{multline*}
    \EE(\|Y_{t\wedge\tau_n}\|^q) \leq \EE(\|Y_0\|^q) + C\int_0^{t} \EE(\|Y_{s\wedge\tau_n}\|^{q-2}) \,\dd s + C\int_0^{t} \EE(\|Y_{s\wedge\tau_n}\|^{q-1}) \,\dd s \\
    + C\int_0^{t} \EE(\|Y_{s\wedge\tau_n}\|^{q}) \,\dd s.
  \end{multline*}
  Notice that, by  Hölder's inequality,  one gets
  \[\int_0^{t}\EE(\|Y_{s\wedge\tau_n}\|^{q-1}) \,\dd s \leq \int_0^{t}\EE(\|Y_{s\wedge\tau_n}\|^{q})^{\frac{q-1}{q}} \,\dd s \leq C_T + C\int_0^{t}\EE(\|Y_{s\wedge\tau_n}\|^q) \,\dd s,\]
  and a similar bound holds for the term of order $q-2$. Combined with the previous, this entails
  \[\EE(\|Y_{t\wedge\tau_n}\|^q) \leq \EE(\|Y_0\|)^q + C_T + C\int_0^{t}\EE(\|Y_{s\wedge\tau_n}\|^q)\,\dd s,\]
  from where Gronwall's lemma yields
  \[ \EE(\|Y_{t\wedge\tau_n}\|^q) \leq C_T(\EE(\|Y_0\|)^q + 1). \]
 We conclude with  Fatou's lemma  taking $n\to\infty$.
\end{proof}

In order to check that  condition (\hyperlink{c3}{C.3}) holds, we need some additional bounds stated in the next two results (respectively analogous to Lemmas \ref{bd:lem:orig_indepdistance}  and \ref{bd:lem:lipschitzdyn} in the pure branching case).
In particular, the following result will be used to control the joint evolution of coupled particles in the two systems, between birth and death events.

\begin{lem}\label{bdi:lem:lipschitzdyn}
  Let $N$ and $K\in \NN\setminus \{0\}$ be deterministic and fixed, and consider the diffusion processes $(X^n)_{n=1}^N$ in $(\RR^d)^N$  evolving according to
  \begin{align*}
    \dd X_t^n = b(X_t^n, H*\mu_t^K(X_t^n)) \,\dd t + \sigma(X_t^n,G*\mu_t^K(X_t^n)) \,\dd B_t^n,\quad t\geq 0,
  \end{align*}
  where  $(B^n)_{n=1}^N$ are independent Brownian motions in $\RR^d$ and  $\mu_t^K$ stands for the empirical measure  $\mu_t^K=\frac{1}{K} \sum_{n=1}^N \delta_{X_t^n}$ of constant mass $N/K$. Consider also $N$ i.i.d. copies $(Y^n)_{n=1}^N$ of the process \eqref{eq:nonlindiffusion}, 
  \begin{align*}
    \dd Y_t^n = b(Y_t^n, H*\mu_t(Y_t^n)) \,\dd t + \sigma(Y_t^n,G*\mu_t(Y_t^n)) \,\dd B_t^n, \quad t\geq 0,
  \end{align*}
  driven by the same Brownian motions $(B^n)_{n=1}^N$ . For each $T>0$, there is  $C_T>0$ not depending on $K$ nor on $N$ such that  for all   $0<u<t<T$ and each $n=1,\dots,N$, 
  \begin{align*}
  \EE(\| X_t^n-Y_t^n\|^2-\| X_{u}^n-Y_{u}^n\|^2)\leq&\ C_T\int_{u}^t\EE(\| X_s^n-Y_s^n\|^2) \,\dd s + \int_u^t\EE\Bigl(\bigl\Vert\mu_s^K-\mu_s\bigr\Vert_{\BL^*}^2\Bigr) \,\dd s.
  \end{align*}
\end{lem}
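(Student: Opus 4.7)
The plan is to apply Itô's formula to $t \mapsto \|X_t^n - Y_t^n\|^2$ on the interval $[u,t]$, producing the decomposition
\begin{align*}
\|X_t^n - Y_t^n\|^2 - \|X_u^n - Y_u^n\|^2 ={}& \int_u^t 2(X_s^n - Y_s^n)^{\mathrm{t}}\Delta b_s \,\dd s + \int_u^t \|\Delta\sigma_s\|^2 \,\dd s \\
& + \int_u^t 2(X_s^n - Y_s^n)^{\mathrm{t}}\Delta\sigma_s \,\dd B_s^n,
\end{align*}
where $\Delta b_s \coloneqq b(X_s^n, H*\mu_s^K(X_s^n)) - b(Y_s^n, H*\mu_s(Y_s^n))$ and $\Delta\sigma_s$ is defined analogously. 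The goal is then to bound the two deterministic integrals in expectation, while ensuring that the stochastic integral becomes a true martingale after localization.

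The critical step is to control $|\Delta b_s|$ and $\|\Delta\sigma_s\|$ in a way that cleanly separates $\|X_s^n - Y_s^n\|$ from $\|\mu_s^K - \mu_s\|_{\BL^*}$. Joint Lipschitz continuity of $b$ from (\hyperlink{h2}{H.2}) reduces this to bounding $|H*\mu_s^K(X_s^n) - H*\mu_s(Y_s^n)|$. I would insert the auxiliary term $H*\mu_s(X_s^n)$ and use the triangle inequality to get
\begin{align*}
|H*\mu_s^K(X_s^n) - H*\mu_s(Y_s^n)| \leq{}& |H*\mu_s(X_s^n) - H*\mu_s(Y_s^n)| + |(H*\mu_s^K - H*\mu_s)(X_s^n)| \\
\leq{}& L_H \langle\mu_s,1\rangle \|X_s^n - Y_s^n\| + \|H\|_{\BL}\,\|\mu_s^K - \mu_s\|_{\BL^*},
\end{align*}
where I used that $H*\mu_s$ is $L_H\langle\mu_s,1\rangle$-Lipschitz and that $z \mapsto H(X_s^n - z)$ has $\BL$-norm bounded by $\|H\|_\infty + L_H$. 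Since $\sup_{s\in[0,T]}\langle\mu_s,1\rangle < \infty$ follows from the logistic ODE \eqref{eq:logisticODE}, this yields $|\Delta b_s| \leq C_T(\|X_s^n - Y_s^n\| + \|\mu_s^K - \mu_s\|_{\BL^*})$, and an entirely analogous argument applied to $G$ and $\sigma$ gives the same type of bound for $\|\Delta\sigma_s\|$. Young's inequality then converts the cross-term $(X_s^n - Y_s^n)^{\mathrm{t}}\Delta b_s$ and the square $\|\Delta\sigma_s\|^2$ into pure sums of $\|X_s^n - Y_s^n\|^2$ and $\|\mu_s^K - \mu_s\|_{\BL^*}^2$.

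The stochastic integral is handled via a standard localization. With $\tau_m \coloneqq \inf\{s\geq 0 : \|X_s^n\|^2 + \|Y_s^n\|^2 > m\}$, the stopped integrand is bounded and the stopped integral is a true martingale of zero expectation. BDG-and-Gronwall moment estimates applied to $X^n$ (using the deterministic upper bound $|G*\mu_s^K(x)| \leq \|G\|_\infty N/K$ combined with the linear-growth assumption on $\sigma$) and to $Y^n$ (along the lines of Lemma \ref{lem:boundsmom}) yield $\EE(\sup_{s\in[0,T]}\|X_s^n\|^2) + \EE(\sup_{s\in[0,T]}\|Y_s^n\|^2) < \infty$, so that $\PP(\tau_m < T) \to 0$. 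Taking expectation of the stopped Itô identity and letting $m \to \infty$ by dominated convergence delivers the claimed bound after collecting the estimates from the previous paragraph.

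The main obstacle is precisely the triangular decomposition of $|H*\mu_s^K(X_s^n) - H*\mu_s(Y_s^n)|$: applying Lipschitz continuity of $H$ directly to $|H*\mu_s^K(X_s^n) - H*\mu_s^K(Y_s^n)|$ would produce a Lipschitz constant proportional to $\langle\mu_s^K,1\rangle = N/K$, forcing the resulting constant $C_T$ to depend on $N/K$. Inserting the deterministic intermediate measure $\mu_s$ instead routes the nonlinearity through a quantity whose mass is controlled uniformly in $s\in[0,T]$ by the logistic ODE, and isolates the contribution of $\mu_s^K - \mu_s$ into a single $\|\cdot\|_{\BL^*}$ factor, as required for the statement to hold with a constant $C_T$ independent of $N$ and $K$.
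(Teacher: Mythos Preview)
Your proposal is correct and follows essentially the same approach as the paper's proof: both apply It\^o's formula to $\|X_t^n-Y_t^n\|^2$, use the triangular decomposition of $|H*\mu_s^K(X_s^n)-H*\mu_s(Y_s^n)|$ via the intermediate term $H*\mu_s(X_s^n)$ (precisely to avoid an $N/K$-dependent constant, as you observe), apply Young's inequality, and then localize and pass to the limit using the square-integrable running-supremum bounds on $X^n$ and $Y^n$. The only cosmetic difference is that the paper establishes those supremum bounds first, before the main computation, whereas you invoke them at the localization step.
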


\begin{proof}
  We first check that the running supremum of each  process   $(X^n)$ is square integrable.  Using similar bounds as  in the proof of Lemma \ref{lem:boundsmom}, we get for each $t\in[0,T]$, 
  \begin{equation*}
    \begin{split}
    \|X_t^n\|^2 &\leq \|X_0^n\|^2 + \int_0^t2 \|X_s^n\| \|b(X_s^n, H*\mu_s^K(X_s^n))\| \,\dd s  + \int_0^t 2(X_s^n)^{\mathrm{t}}\sigma(X_s^n,G*\mu_s^K(X_s^n)) \,\dd B_s \\
    &\qquad + \int_0^t \|\sigma(X_s^n,G*\mu_s^K(X_s^n))\|^2 \,\dd s \\
    &\leq \|X_0^n\|^2 + C_T + C\int_0^t \|X_s^n\| \,\dd s + C\int_0^t \|X_s^n\|^2 \,\dd s + C\int_0^t \|X_s^n\| |H*\mu_s^K(X_s^n)| \,\dd s \\
    &\qquad + C\int_0^t |G*\mu_s^K(X_s^n)|^2 \,\dd s + \int_0^t 2(X_s^n)^{\mathrm{t}}\sigma(X_s^n,G*\mu_s^K(X_s^n)) \,\dd B_s \\
    &\leq \|X_0^n\|^2 + C_T +  CT \|H\|_\infty^2\left(\frac{N}{K}\right)^2 + CT \|G\|_\infty^2\left(\frac{N}{K}\right)^2 + C\int_0^t \|X_s^n\|^2 \,\dd s \\
    &\qquad + \int_0^t 2(X_s^n)^{\mathrm{t}}\sigma(X_s^n,G*\mu_s^K(X_s^n)) \,\dd B_s,
    \end{split}
  \end{equation*}
  since, in the present lemma's setting, $ \langle\mu_s^K,1\rangle = N/K$ for all $s\geq0$.   Let $(\tau_m)_{m\in\NN}$ be a localizing sequence for the local martingale in the previous inequality. As in the proof of Lemma \ref{bd:lem:lipschitzdyn} we localize and then we take supremum until time $t\wedge\tau_m$ on both sides, obtaining that
  \begin{equation*}
    \begin{split}
    \sup_{u\in[0,t\wedge\tau_m]}\|X_u^n\|^2 &\leq \|X_0^n\|^2 + C_T +  CT \|H\|_\infty^2\biggl(\frac{N}{K}\biggr)^2 + CT \|G\|_\infty^2\biggl(\frac{N}{K}\biggr)^2 + C\int_0^t \sup_{u\in[0,s\wedge\tau_m]}\|X_u^n\|^2 \,\dd s \\
    &\qquad+ \sum_{i,j=1}^d\biggl(\sup_{u\in[0,t\wedge\tau_m]} \biggl|\int_0^u 2(X_s^n)^{(i)}\sigma^{(ij)}(X_s^n,G*\mu_s^K(X_s^n)) \,\dd B_s^{(j)}\biggr|\biggr).
    \end{split}
  \end{equation*}
  The expectation  of the last term is controlled using the BDG inequality by
  \begin{align*}
    \sum_{i,j=1}^d\EE\biggl(\sup_{u\in[0,t\wedge\tau_m]} \biggl|\int_0^u 2(X_s^n)^{(i)}\sigma^{(ij)}(X_s^n,G*\mu_s^K(X_s^n)) \,\dd B_s^{(j)}\biggr| \biggr) {}& \\
    {}&\hspace{-25.5ex} \leq \sum_{i,j=1}^d\EE\biggl( \biggl(\int_0^{t\wedge\tau_m} 4\Bigl((X_s^n)^{(i)}\sigma^{(ij)}(X_s^n,G*\mu_s^K(X_s^n))\Bigr)^2 \,\dd s\biggr)^{\frac{1}{2}} \biggr) \\
    {}&\hspace{-25.5ex} \leq C\EE\biggl( \biggl(\int_0^{t\wedge\tau_m} \|X_s^n\|^2 \|\sigma(X_s^n,G*\mu_s^K(X_s^n))\|^2 \,\dd s\biggr)^{\frac{1}{2}} \biggr) \\
    {}&\hspace{-25.5ex} \leq C\EE\biggl(\biggl( 1 + \|G\|_\infty^2\left(\frac{N}{K}\right)^2 \biggr)^{\frac{1}{2}} \biggl(\int_0^t \|X_{s\wedge\tau_m}^n\|^2 \,\dd s\biggr)^{\frac{1}{2}} \biggr) \\
    {}&\hspace{-25.5ex} \leq \biggl(1+\biggl(\frac{N}{K}\biggr)^2\biggr)\biggl(C_T + C_T\int_0^t \EE(\|X_{s\wedge\tau_m}^n\|^2) \,\dd s\biggr).
  \end{align*}
  This allows us to deduce that
  \begin{align*}
    \EE\biggl(\sup_{u\in[0,t\wedge\tau_m]}\|X_u^n\|^2\biggr) \leq \EE\bigl(\|X_0^n\|^2\bigr) + C_{T,N,K} + C_{T,N,K}\int_0^t \EE\biggl(\sup_{u\in[0,s\wedge\tau_m]}\|X_u^n\|^2 \biggr) \,\dd s,
  \end{align*}
  where $C_{T,N,K}$ is a constant depending on $T,N$ and $K$ (recalling that $N$ and $K$ are deterministic in the setting of this lemma). From this last inequality, Gronwall's lemma and monotone convergence when $m\to\infty$ yield
  \begin{equation*}%\label{eq:finitesupmoment}
    \EE\biggl( \sup_{t\in[0,T]}\|X_t^n\|^2\biggr) < \infty.
  \end{equation*}
  A similar argument can be applied to the process $(Y_t^n)_{t\geq0}$ in order to obtain the same conclusion.
  We now apply Itô's formula for fixed $n$ to  get 
  \begin{multline*}
    \|X_t^n-Y_t^n\|^2 = \|X_u^n-Y_u^n\|^2+ \int_u^t2(X_s^n-Y_s^n)^{\mathrm{t}}\bigl(b(X_s^n,H*\mu_s^K(X_s^n)) - b(Y_s^n,H*\mu_s(Y_s^n))\bigr)\, \dd s\\
    \hspace{11ex} + \int_u^t 2(X_s^n-Y_s^n)^{\mathrm{t}} \bigl(\sigma(X_s^n,G*\mu_s^K(X_s^n))-\sigma(Y_s^n,G*\mu_s(Y_s^n))\bigr)\, \dd B_s^n\\
    +\sum_{i,j=1}^d\int_u^t\bigl(\sigma^{(ij)}(X_s^n,G*\mu_s^K(X_s^n))-\sigma^{(ij)}(Y_s^n,G*\mu_s(Y_s^n))\bigr)^2\, \dd s.
  \end{multline*}
  Using the Lipschitz character of the coefficients we get the bound
  \begin{multline*}
    \|X_t^n-Y_t^n\|^2 \leq \|X_u^n-Y_u^n\|^2 + C\int_u^t\bigl(\|X_s^n-Y_s^n\|^{2} + \|X_s^n-Y_s^n\||H*\mu_s^K(X_s^n)-H*\mu_s(Y_s^n)|\bigr)\,\dd s\\
    + C\int_u^v \bigl(\|X_s^n-Y_s^n\|^{2} + |G*\mu_s^K(X_s^n)-G*\mu_s(Y_s^n)|^2\bigr)\,\dd s\\
    + \int_u^t 2(X_s^n-Y_s^n)^{\mathrm{t}} (\sigma(X_s^n,G*\mu_s^K(X_s^n))-\sigma(Y_s^n,G*\mu_s(Y_s^n)))\, \dd B_s^n.
  \end{multline*}
  Recalling that the function $H(\cdot-x)$ is bounded and Lipschitz  for each $x\in\RR^d$, we see that
  \begin{align*}
    \left\vert H*\mu_s^K(X_s^n)-H*\mu_s(Y_s^n)\right\vert \leq&\ \left\vert H*\mu_s^K(X_s^n) - H*\mu_s(X_s^n)\right\vert+\left\vert H*\mu_s(X_s^n) - H*\mu_s(Y_s^n)\right\vert\\
    \leq&\ C\Vert\mu_s^K-\mu_s\Vert_{\BL^*}+C\Vert\mu_s\Vert_{\BL^*} \|X_s^n-Y_s^n\|,
  \end{align*}
  and similarly for the terms involving $G$. The uniform bound on the mass of  $(\mu_t)_{t\geq0}$ on finite time intervals allows us to get for all $0<u<t<T$ that
  \begin{equation*}
    \begin{split}
    \|X_t^n-Y_t^n\|^2 &\leq \|X_u^n-Y_u^n\|^2 + C\int_u^t \bigl(\|X_s^n-Y_s^n\|^{2} + \|X_s^n-Y_s^n\|\|\mu_s^K-\mu_s\|_{\BL^*}\bigr)\, \dd s\\
    &\qquad + C\int_u^v \bigl(\|X_s^n-Y_s^n\|^{2} + \|\mu_s^K-\mu_s\|_{\BL^*}^2\bigr)\, \dd s\\
    &\qquad\qquad + \int_u^t 2(X_s^n-Y_s^n)^{\mathrm{t}} (\sigma(X_s^n,G*\mu_s^K(X_s^n))-\sigma(Y_s^n,G*\mu_s(Y_s^n)))\, \dd B_s^n \\
    &\leq \|X_u^n-Y_u^n\|^2 + C\int_u^t\bigl(\|X_s^n-Y_s^n\|^{2} + \|\mu_s^K-\mu_s\|_{\BL^*}^2\bigr)\, \dd s\\
    &\qquad + \int_u^t 2(X_s^n-Y_s^n)^{\mathrm{t}} (\sigma(X_s^n,G*\mu_s^K(X_s^n))-\sigma(Y_s^n,G*\mu_s(Y_s^n)))\, \dd B_s^n,
    \end{split}
  \end{equation*}
  where we used Young's inequality for the second inequality, and where $C$ is a constant not depending on $K$ nor on $N$ that changed from line to line.  By considering a localizing sequence $(\tau_m)_{m}$ for the local martingale on the right hand side, we can take expectation  of the stopped processes to obtain
  \begin{multline*}
    \EE(\| X_{t\wedge\tau_m}^n-Y_{t\wedge\tau_m}^n\|^2 ) \leq \EE(\| X_{u}^n-Y_{u}^n\|^2) + C\int_{u}^{t}\EE(\| X_{s\wedge\tau_m}^n-Y_{s\wedge\tau_m}^n\|^2)\,\dd s \\
    + \int_u^{t}\EE\bigl(\bigl\Vert\mu_{s\wedge\tau_m}^K-\mu_{s\wedge\tau_m}\bigr\Vert_{\BL^*}^2\bigr)\,\dd s,
  \end{multline*} 
  for all $0<u<t<T$.  Thanks to  the second moments controls on the running suprema  of $X^n$ and $Y^n$,  and since the total  mass of $\mu^K_t$is constant in the context of the present lemma, we can use dominated convergence to take $m\to \infty$ and conclude the proof.  
\end{proof}

The following bound gathering  all the previous estimates will allow us to check that condition (\hyperlink{c3}{C.3}) holds.

\begin{lem}\label{lem:systemsdistance}
  For $t\in[0,T]$
  \begin{align*}
    \EE\Big(\frac{1}{K}\sum_{n=1}^{N_t^{K}}\| X_t^n-Y_t^n\|^2\Big) \leq{}& C_T\biggl[ I_4^2(K) + K^{-\frac{1}{2}} + \int_0^T\EE\Big(\frac{N_s^{K}}{K}W_2^2(\bar{\nu}_s^K,\bar{\mu}_s)\Big)\,\dd s\biggr].
  \end{align*}
  where $C_T>0$ is a constant that depends on the parameters of the model.
\end{lem}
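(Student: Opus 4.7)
The plan is to follow the structure of the proof of Lemma \ref{bd:lem:orig_indepdistance}, adapting it to the measure-dependent coefficients and to the logistic death rate. I would introduce the product empirical measure $\eta_t^K=\frac{1}{K}\sum_{n=1}^{N_t^K}\delta_{(X_t^n,Y_t^n)}$, so that the quantity to estimate is exactly $\EE(\langle\eta_t^K,d_2\rangle)$ with $d_2(x,y)=\|x-y\|^2$, and decompose this expectation along the jump times $(T_m)_m$ of $(N_t^K)_{t\geq 0}$, separating a diffusive (inter-jump) contribution from a jump contribution as in the proof of Lemma \ref{bd:lem:orig_indepdistance}.

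Between consecutive jumps the particle count is frozen, so I would apply Lemma \ref{bdi:lem:lipschitzdyn} conditionally on the jump structure for each $n\in\{1,\dots,N_{T_{m-1}}^K\}$, sum over $n$ and divide by $K$. After summing over all intervals, this produces the Gr\"onwall-type term $C_T\int_0^t\EE(\langle\eta_s^K,d_2\rangle)\,\dd s$ plus an additional diffusive source term $\int_0^t\EE\bigl(\tfrac{N_s^K}{K}\Vert\mu_s^K-\mu_s\Vert_{\BL^*}^2\bigr)\,\dd s$, which is the genuine novelty with respect to the pure branching case. For the jump contribution, birth events at rate $r$ yield, via the compensation formula for $\mathcal{N}$ together with Remark \ref{rmk:atoms} and the optimal coupling property of $\Lambda$ from Lemma \ref{lem:optimal_coupling}, a term of the form $r\int_0^t\EE\bigl(\tfrac{N_s^K}{K}W_2^2(\bar{\mu}_s^K,\bar{\mu}_s)\bigr)\,\dd s$, while death events strictly decrease $\langle\eta_s^K,d_2\rangle$ and can be dropped. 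Splitting $W_2^2(\bar{\mu}_s^K,\bar{\mu}_s)\leq 2W_2^2(\bar{\mu}_s^K,\bar{\nu}_s^K)+2W_2^2(\bar{\nu}_s^K,\bar{\mu}_s)$ and using the elementary inequality $\tfrac{N_s^K}{K}W_2^2(\bar{\mu}_s^K,\bar{\nu}_s^K)\leq\langle\eta_s^K,d_2\rangle$ then absorbs part of the birth term into the Gr\"onwall piece and leaves exactly the target integral $\int_0^T\EE\bigl(\tfrac{N_s^K}{K}W_2^2(\bar{\nu}_s^K,\bar{\mu}_s)\bigr)\,\dd s$.

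The main obstacle is controlling the diffusive source term $\EE\bigl(\tfrac{N_s^K}{K}\Vert\mu_s^K-\mu_s\Vert_{\BL^*}^2\bigr)$, which did not appear in the pure branching case. I would insert the auxiliary measure $\nu_s^K$ via triangle inequality: since $\mu_s^K$ and $\nu_s^K$ share the same mass $\tfrac{N_s^K}{K}$, testing $\Vert\mu_s^K-\nu_s^K\Vert_{\BL^*}$ against $\Vert f\Vert_{\BL}\leq 1$ and applying Cauchy--Schwarz yields $\Vert\mu_s^K-\nu_s^K\Vert_{\BL^*}^2\leq\tfrac{N_s^K}{K}\langle\eta_s^K,d_2\rangle$, while Lemma \ref{lem:normwasserstein} gives $\Vert\nu_s^K-\mu_s\Vert_{\BL^*}^2\leq 2\bigl(\tfrac{N_s^K}{K}\bigr)^2 W_2^2(\bar{\nu}_s^K,\bar{\mu}_s)+2\bigl(\tfrac{N_s^K}{K}-\langle\mu_s,1\rangle\bigr)^2$. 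After multiplication by $\tfrac{N_s^K}{K}$ and taking expectation, the higher powers of $\tfrac{N_s^K}{K}$ that appear as coefficients would be tamed through Cauchy--Schwarz and the uniform moment bounds $\sup_{s\leq T}\EE(\langle\mu_s^K,1\rangle^p)\leq C_{T,p}$ of Lemma \ref{lem:boundsmass}, valid for any $p\geq 1$; the mass-defect term is in turn controlled by Cauchy--Schwarz and the $L^4$ estimate $\EE\bigl(|\langle\mu_s^K,1\rangle-\langle\mu_s,1\rangle|^4\bigr)\leq C_T(I_4^4(K)+K^{-1})$ of the same lemma, which is the origin of the $I_4^2(K)+K^{-1/2}$ contribution in the final bound. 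A Gr\"onwall argument applied to $t\mapsto\EE(\langle\eta_t^K,d_2\rangle)$ then yields the claim; the delicate technical point is to arrange the Cauchy--Schwarz splittings so that the factor multiplying $\langle\eta_s^K,d_2\rangle$ inside the Gr\"onwall integrand is deterministic and integrable on $[0,T]$, without requiring higher moments of $\langle\eta_s^K,d_2\rangle$ beyond $L^1$.
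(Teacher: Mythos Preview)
Your overall plan coincides with the paper's proof: the same product empirical measure $\eta_t^K$, the same jump-time decomposition, the same use of Lemma~\ref{bdi:lem:lipschitzdyn} between jumps and of Lemma~\ref{lem:optimal_coupling} at births, and the same Gr\"onwall closure together with the $L^4$ mass estimate of Lemma~\ref{lem:boundsmass}. The only point where your route diverges is in handling the diffusive source term $\EE\bigl(\tfrac{N_s^K}{K}\|\mu_s^K-\mu_s\|_{\BL^*}^2\bigr)$, and there your specific splitting creates exactly the obstacle you flag at the end without resolving.

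Concretely, triangulating at the level of the \emph{unnormalized} measures through $\nu_s^K$ and then applying Lemma~\ref{lem:normwasserstein} with $\nu_s^K$ as leading argument produces, after multiplication by $N_s^K/K$, the terms $(N_s^K/K)^2\langle\eta_s^K,d_2\rangle$ and $(N_s^K/K)^3 W_2^2(\bar{\nu}_s^K,\bar{\mu}_s)$. The first has a random prefactor in front of $\langle\eta_s^K,d_2\rangle$, so the Gr\"onwall argument in $L^1$ does not close; using Cauchy--Schwarz to strip off $(N_s^K/K)^2$ forces an $L^2$ bound on $\langle\eta_s^K,d_2\rangle$, precisely what you say you want to avoid. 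The paper sidesteps this by applying Lemma~\ref{lem:normwasserstein} with the \emph{deterministic} measure $\mu_s$ as first argument,
\[
\|\mu_s^K-\mu_s\|_{\BL^*}\leq \langle\mu_s,1\rangle\,\|\bar{\mu}_s^K-\bar{\mu}_s\|_{\BL^*}+\Bigl|\tfrac{N_s^K}{K}-\langle\mu_s,1\rangle\Bigr|,
\]
so the coefficient in front of the normalized distance is $\sup_{u\leq T}\langle\mu_u,1\rangle^2$, which is finite and deterministic. Only then does one triangulate $\|\bar{\mu}_s^K-\bar{\mu}_s\|_{\BL^*}$ through $\bar{\nu}_s^K$, and the bound $\tfrac{N_s^K}{K}\|\bar{\mu}_s^K-\bar{\nu}_s^K\|_{\BL^*}^2\leq \tfrac{N_s^K}{K}W_2^2(\bar{\mu}_s^K,\bar{\nu}_s^K)\leq\langle\eta_s^K,d_2\rangle$ feeds directly into Gr\"onwall with a constant coefficient. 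The same normalization also keeps the $W_2^2(\bar{\nu}_s^K,\bar{\mu}_s)$ term with the single factor $N_s^K/K$ that the statement requires. Swap your unnormalized triangulation for this normalization step and your proof goes through.
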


\begin{proof}
  As in  the proof of Lemma \ref{bd:lem:orig_indepdistance} we consider the  product empirical measure $\eta_t^K\coloneqq\frac{1}{K}\sum_{n=1}^{N_t^{K}}\delta_{(X_t^n,Y_t^n)}$ and  decompose again
  \[\EE\biggl(\frac{1}{K}\sum_{n=1}^{N_t^{K}}\vert X_t^n-Y_t^n\vert^2\biggr)=\EE(\langle\eta_t^K,d_2\rangle),\]
  in terms of  the sequence of jump times $(T_m)_{m\in\mathbb{N}}$,  as in \eqref{proof:jumpdecomp}. We can proceed in a similar  way as in \eqref{proof:betweenjumps} to control the evolution between jumps, now with help of  Lemma \ref{bdi:lem:lipschitzdyn}, and control the contributions in the jump instants in the same way as in \eqref{proof:jumps}, to obtain
  \begin{multline*}
    \EE(\langle\eta_t^K,d_2\rangle) \leq C\int_0^t\EE(\langle\eta_s^K,d_2\rangle)\,\dd s+C\int_0^t\EE\Bigl(\frac{N_s^{K}}{K}W_2^2(\bar{\nu}_s^K,\bar{\mu}_s)\Bigr) \,\dd s \\
    +C\int_0^t\EE\Bigl(\frac{N_s^{K}}{K}\Vert\mu_s^K-\mu_s\Vert_{\BL^*}^2\Bigr) \,\dd s,
  \end{multline*}
  where $C$ is a positive constant. Thus, with respect to the case dealt with in the previous section,  incorporating interactions at the level of the dynamics only  results in the addition of the last term. In order to  bound this new term,  we use Lemma \ref{lem:normwasserstein} to get
  \begin{align*}%\label{eq:normalization}
    \EE\Big(\frac{N_s^{K}}{K}\bigl\Vert\mu_s^K-\mu_s\bigr\Vert_{\BL^*}^2\Big)&\leq\EE\Big(\frac{N_s^{K}}{K}\Big(\langle\mu_s,1\rangle\Vert\bar{\mu}_s^K-\bar{\mu}_s\Vert_{\BL^*}+\Big\vert\frac{N_s^{K}}{K}-\langle\mu_s,1\rangle\Big\vert\Big)^2\Big) \notag\\
    &\leq 2\sup_{u\in[0,T]}\langle\mu_u,1\rangle^2\EE\Big(\frac{N_s^{K}}{K}\Vert\bar{\mu}_s^K-\bar{\mu}_s\Vert_{\BL^*}^2\Big)+2\EE\Big(\frac{N_s^{K}}{K}\Big|\frac{N_s^{K}}{K}-\langle\mu_s,1\rangle\Big|^2\Big) \notag\\
    &\leq C\EE\Big(\frac{N_s^{K}}{K}\Vert\bar{\mu}_s^K-\bar{\nu}_s^K\Vert_{\BL^*}^2\Big)+C\EE\Big(\frac{N_s^{K}}{K}\Vert\bar{\mu}_s-\bar{\nu}_s^K\Vert_{\BL^*}^2\Big) \notag\\
    &\qquad +2\EE\Big(\frac{N_s^{K}}{K}\Big|\frac{N_s^{K}}{K}-\langle\mu_s,1\rangle\Big|^2\Big),
  \end{align*}
  where the control on the mass of the solution to equation \eqref{bdi:limiteq} on finite time intervals is used. To control the first term of the right hand side,  we relate it to the Wasserstein distance using again Lemma \ref{lem:normwasserstein}, obtaining 
  \[\EE\Big(\frac{N_s^{K}}{K}\Vert\bar{\mu}^K_s-\bar{\nu}_s^K\Vert_{\BL^*}^2\Big)\leq\EE \Big(\frac{N_s^{K}}{K}W_2^2(\bar{\mu}_s^K,\bar{\nu}_s^K)\Big)\leq\EE(\langle\eta_s^K,d_2\rangle).\]
  We do the same with the second term to get
  \[\EE\Big(\frac{N_s^{K}}{K}\Vert\bar{\mu}_s-\bar{\nu}_s^K\Vert_{\BL^*}^2\Big)\leq\EE \Big(\frac{N_s^{K}}{K}W_2^2(\bar{\mu}_s,\bar{\nu}_s^K)\Big).\]
  We thus obtain the inequality
  \begin{multline*}%\label{bdi:proof:couplgronw}
    \EE(\langle\eta_t^K,d_2\rangle) \leq  C\int_0^t\EE(\langle\eta_s^K,d_2\rangle)\,\dd s+C\int_0^t\EE\Big(\frac{N_s^{K}}{K}W_2^2(\bar{\nu}_s^K,\bar{\mu}_s)\Big)\,\dd s \\
    + 2\int_0^t\EE\Big(\frac{N_s^{K}}{K}\Big|\frac{N_s^{K}}{K}-\langle\mu_s,1\rangle\Big|^2\Big)\,\dd s, 
  \end{multline*}
  where only the last term needs to be controlled. Using Hölder's inequality yields
  \begin{align*}
    \EE\Big(\frac{N_s^{K}}{K}\Big|\frac{N_s^{K}}{K}-\langle\mu_s,1\rangle\Big|^2\Big)\leq&\EE\Big(\Big(\frac{N_s^{K}}{K}\Big)^2\Big)^{\frac{1}{2}}\EE\Big(\Big|\frac{N_s^{K}}{K}-\langle\mu_s,1\rangle\Big|^4\Big)^{\frac{1}{2}},
  \end{align*}
  where the first factor on the r.h.s. is controlled by Lemma \ref{lem:boundsmass}. Thanks to the second bound in Lemma \ref{lem:boundsmass}, we obtain that
  \begin{align*}
    \EE(\langle\eta_t^K,d_2\rangle)\leq&\ C\int_0^t\EE(\langle\eta_s^K,d_2\rangle)\,\dd s+C\int_0^t\EE\Big(\frac{N_s^{K}}{K}W_2^2(\bar{\nu}_s^K,\bar{\mu}_s)\Big)\,\dd s  +C_T\biggl( I_4^2(K) + \frac{1}{\sqrt{K}} \biggr). 
  \end{align*}
  Finally, Gronwall's lemma  yields 
  \[ \EE(\langle\eta_t^K,d_2\rangle)\leq C_T\bigg[ I_4^2(K) + \frac{1}{\sqrt{K}} + \int_0^T\EE\Big(\frac{N_s^{K}}{K}W_2^2(\bar{\nu}_s^K,\bar{\mu}_s)\Big)\,\dd s\bigg]e^{CT}. \]
\end{proof}

We deduce the following result. 

\begin{cor}
  Condition \textnormal{(\hyperlink{c3}{C.3})} holds.
\end{cor}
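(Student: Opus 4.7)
The plan is to chain together Lemma \ref{lem:systemsdistance} with the pathwise bound \eqref{eq:NKW_leq_eta} and then absorb the integral term on the right-hand side by invoking the conditional independence property (\hyperlink{c2}{C.2}) established in Proposition \ref{prop:condindep}, exactly as in the pure branching case (Corollary \ref{cor:improvedC3}).

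First, since $W_2^2(\bar{\mu}_t^K,\bar{\nu}_t^K)\leq \frac{1}{N_t^K}\sum_{n=1}^{N_t^K}\|X_t^n-Y_t^n\|^2$, inequality \eqref{eq:NKW_leq_eta} combined with Lemma \ref{lem:systemsdistance} yields
\[
  \EE\Bigl(\tfrac{N_t^K}{K}W_2^2(\bar{\mu}_t^K,\bar{\nu}_t^K)\Bigr)\leq C_T\Bigl[I_4^2(K)+K^{-1/2}+\int_0^T\EE\Bigl(\tfrac{N_s^K}{K}W_2^2(\bar{\nu}_s^K,\bar{\mu}_s)\Bigr)\,\dd s\Bigr].
\]
Second, by Proposition \ref{prop:condindep}, for every $s\in[0,T]$ the atoms of $\nu_s^K$ are, conditionally on $N_s^K$, i.i.d.\ samples of $\bar{\mu}_s$. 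This is precisely the hypothesis of Lemma \ref{lem:boundNW} (applied with $\bar{\mu}=\bar{\mu}_s$, $N=N_s^K$, $\bar{\nu}^K=\bar{\nu}_s^K$), so
\[
  \EE\Bigl(\tfrac{N_s^K}{K}W_2^2(\bar{\nu}_s^K,\bar{\mu}_s)\Bigr)\leq C_{d,q}\,M_q^{2/q}(\bar{\mu}_s)\,\EE\bigl(1\vee (N_s^K/K)\bigr)\,R_{d,q}(K).
\]
Lemma \ref{lem:boundsmom} controls $M_q(\bar{\mu}_s)$ uniformly in $s\in[0,T]$, and Lemma \ref{lem:boundsmass} (with $p=1$, applied to $1\vee N_s^K/K \leq 1+N_s^K/K$) controls $\EE(1\vee N_s^K/K)$ uniformly in $s\in[0,T]$. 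Integrating in $s$ therefore produces a bound of the form $C_T\,R_{d,q}(K)$.

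Third, combining the two previous displays and noting that $K^{-1/2}\leq R_{d,q}(K)$ in every case of the definition of $R_{d,q}$ (indeed, $K^{-1/2}\leq K^{-2/d}$ when $d>4$, and the other cases are immediate), one obtains
\[
  \EE\Bigl(\tfrac{N_t^K}{K}W_2^2(\bar{\mu}_t^K,\bar{\nu}_t^K)\Bigr)\leq C_T\bigl(R_{d,q}(K)+I_4^2(K)\bigr),
\]
which is exactly condition (\hyperlink{c3}{C.3}). There is no real obstacle; all the heavy lifting has already been done in Lemma \ref{lem:systemsdistance} (which bounds the coupling cost between jumps via the optimal transport construction and the interaction term) and in Proposition \ref{prop:condindep} (which ensures that the auxiliary system satisfies the conditional i.i.d.\ property needed to apply the Fournier--Guillin bound of Lemma \ref{lem:boundNW}).
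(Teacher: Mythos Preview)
Your proof is correct and follows essentially the same approach as the paper: combine Lemma \ref{lem:systemsdistance} with Lemma \ref{lem:boundNW} (whose applicability is justified by (\hyperlink{c2}{C.2}), Lemma \ref{lem:boundsmom}, and Lemma \ref{lem:boundsmass}), note that $K^{-1/2}\leq C\,R_{d,q}(K)$, and then apply the pathwise bound \eqref{eq:NKW_leq_eta}. Your write-up is in fact slightly more explicit than the paper's about which auxiliary estimates feed into the application of Lemma \ref{lem:boundNW}, but the argument is identical.
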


\begin{proof}
  Applying Lemma \ref{lem:systemsdistance}, Lemma \ref{lem:boundNW} and noting  that $  1/\sqrt{K}\leq  C  R_{d,q}(K)$, we obtain the bound
  \begin{equation}\label{eq:boundforC3}  
    \EE(\langle\eta_t^K,d_2\rangle) \leq  C_T \Bigl(I_4^2(K) + R_{d,q}(K)\Bigr). 
  \end{equation}
  It suffices to combine this with the inequality  $  \EE\bigl(\frac{N_t^{K}}{K}W_2^2\bigl(\bar{\mu}_t^K,\bar{\nu}_t^K\bigr)\bigr)\leq\EE\bigl(\frac{1}{K}\sum_{n=1}^{N_t^{K}}\left\| X_t^n-Y_t^n\right\|^2\bigr)$.
\end{proof}

Finally, everything is in place to prove the main result.

\begin{proof}[Proof of Theorem \ref{thm:rateofconvergence} under \textnormal{{(\hyperlink{h}{H})}}]
  Following \eqref{eq:boundproofthm2} and using Lemma \ref{lem:boundsmass}, we obtain
  \begin{equation*}
  \begin{split}
    \EE\left(  \Vert\mu_t^K-\mu_t\Vert_ {{\BL}^*}  \right) &\leq \biggl( \EE\Big(\frac{N_t^{K}}{K}W_2^2\bigl(\bar{\nu}_t^K,\bar{\mu}_t\bigr)\Big)^\frac{1}{2} +  \EE\Big(\frac{N_t^{K}}{K}W_2^2\bigl(\bar{\nu}_t^K,\bar{\mu}_t^K\bigr)\Big)^\frac{1}{2}  \biggr)  \EE\Big(\frac{N_t^{K}}{K} \Big)^\frac{1}{2} \\
    &\qquad +  \EE \Bigl( \bigl( \langle\mu^K_t,1\rangle - \langle\mu_t,1\rangle \bigr)^2 \Bigr)^{\frac{1}{2}} \\
    &\leq C_T  \biggl( \EE\Big(\frac{N_t^{K}}{K}W_2^2\bigl(\bar{\nu}_t^K,\bar{\mu}_t\bigr)\Big)^\frac{1}{2} +  \EE\Big(\frac{N_t^{K}}{K}W_2^2\bigl(\bar{\nu}_t^K,\bar{\mu}_t^K\bigr)\Big)^\frac{1}{2} +  I_2(K) +K^{-1/2}  \biggr).
  \end{split}
  \end{equation*}
  As in the previous section, thanks to condition (\hyperlink{c}{C}), Lemma \ref{lem:boundsmass}, Lemma \ref{lem:boundsmom}, and Lemma \ref{lem:boundNW} we obtain 
  \[
    \EE\left(  \Vert\mu_t^K-\mu_t\Vert_ {{\BL}^*}  \right) \leq C_T \Bigl( R_{d,q}(K)^{\frac{1}{2}} + I_4(K) \Bigr),
  \]
  since $I_2(K)\leq I_4(K)$, concluding thus the proof.
\end{proof}

We end this section proving the conditional propagation of chaos property stated in  Corollary  \ref{cor:condiquantipropchaos}. 

\begin{proof}[Proof of Corollary \ref{cor:condiquantipropchaos}]
Let  $ \Psi_{d,q}(K) $ denote the function of $K$ appearing on the right hand side of the bound in Theorem \ref{thm:rateofconvergence}.     By exchangeability of  $\bigl( (X_t^1,Y_t^1), \dots,  \bigl(X_t^{\mbox{\scalebox{0.7}{$N^K_t$}}},Y_t^{\mbox{\scalebox{0.7}{$N^K_t$}}}\bigr)\bigr)$  conditionally on $N_t^K$, for all $t\geq 0$ we get  \begin{equation}\label{eq:NKM2}
    \EE\Bigl(\frac{N_t^K}{K} \| X_t^1- Y_t^1\|^2 \Bigr) =   \EE\biggl(\frac{1}{K} \sum_{n=1}^{N_t^K} \| X_t^n- Y_t^n\|^2 \biggr)\leq C_t  \Psi^2_{d,q}(K) , 
  \end{equation}
  thanks to \eqref{eq:boundforC3}.  By Proposition \ref{prop:condindep}, we have $ {\mathcal{L}} \big( Y_t^{1},\dots, Y_t^{j}  \mid N_t^K\big) =  \bar{\mu}_t^{\otimes j}$ on the event $\{j\leq N_t^K\}$. Now, letting $c_t:= \langle  \mu_t , 1\rangle \in (0,\infty) $ denote the limit in law of $N_t^K/K$,  and using the second inequality of  Lemma \ref{lem:normwasserstein} in the third bound below   we get,  for all $\varepsilon >0$, that
  \begin{equation*}
  \begin{split}
    \PP\Bigl( \Bigl\| {\mathcal{L}} \Big( X_t^{1},\dots, X_t^{j\wedge N_t^K}  &\Bigm\vert N_t^K \Big) - \bar{\mu}_t^{\otimes j}\Bigr\|_{\BL^*} > \varepsilon, \, N_t^K\geq j \Bigr) \\ 
    & \leq \PP\biggl( \frac{N_t^K}{K}\Bigl\| {\mathcal{L}} \Big( X_t^{1},\dots, X_t^{j}  \Bigm\vert N_t^K\Big) - \bar{\mu}_t^{\otimes j}\Bigr\|_{\BL^*} \biggl(\frac{N_t^K}{K}\biggr)^{-1} > \frac{\varepsilon c_t}{2} \frac{2}{c_t},  \, N_t^K\geq j  \biggr) \\
    & \leq \PP\biggl( \frac{N_t^K}{K}\Bigl\| {\mathcal{L}} \Big( X_t^{1},\dots, X_t^{j}  \Bigm\vert N_t^K\Big) - \bar{\mu}_t^{\otimes j}\Bigr\|_{\BL^*} > \frac{\varepsilon c_t}{2} , \,  N_t^K\geq j    \biggr) \\
    &\qquad + \PP\biggl( \frac{N_t^K}{K}< \frac{c_t}{2}\biggr) \\
    & \leq \frac{2}{\varepsilon c_t} \EE\biggl(\frac{N_t^K}{K}\EE\biggl(\sum_{n=1}^{j} \| X_t^n- Y_t^n\| \biggm\vert  N_t^K \biggr) \1_{\bigl\{N_t^K \geq j\bigr\}}\biggr) + \PP\biggl( \frac{N_t^K}{K}< \frac{c_t}{2} \biggr) \\
    & \leq   \frac{2j}{\varepsilon c_t} \EE\Bigl(\frac{N_t^K}{K} \| X_t^1- Y_t^1\| \Bigr) + \PP\biggl( \frac{N_t^K}{K} < \frac{c_t}{2} \biggr) \\
    & \leq  \frac{2j}{\varepsilon c_t} C_t'   \Psi_{d,q}^2(K) + \PP\biggl( \frac{N_t^K}{K} < \frac{c_t}{2} \biggr),
  \end{split}
  \end{equation*}
  using also the Cauchy-Schwarz inequality, the estimate  \eqref{eq:NKM2}  and the fact that $\EE( N_t^K/K)^{1/2} <\infty $  in  the last inequality. Since $N_t^K/K\to c_t$ in law,  the terms in the last line go to $0$ when  $K\to \infty$. The  convergence $ \PP( N_t^K\geq j)\to 1$ then yields 
  \[ \PP\Bigl( \Bigl\| {\mathcal{L}} \Big( X_t^{1},\dots, X_t^{j\wedge N_t^K}  \Bigm\vert N_t^K \Big) - \bar{\mu}_t^{\otimes j}\Bigr\|_{\BL^*} > \varepsilon \Bigm\vert \, N_t^K\geq j \Bigr)  \longrightarrow 0 \]
  as $K\to \infty$ and the statement follows.
\end{proof}

\section{Extensions}\label{sec:extensions}

We finish with some remarks regarding possible extensions of our approach, and the technical issues that must be solved in order to establish similar results in some related, more general settings. 

\begin{rmk}\label{rmk:initialcondcoupling}
If instead of \textnormal{(\hyperlink{h1}{H.1})} it is assumed that the initial data $\mu^K_0$ satisfies the condition  in Lemma \ref{lem:weakconv_mu0} \textnormal{b)},  the arguments and construction  leading to the proof of  Theorem \ref{thm:rateofconvergence} must be modified,  along the following lines: 
  \begin{itemize}
    \item  In condition \textnormal{(\hyperlink{c1}{C.1})},  $ \nu_0^K = \mu_0^K  $ is not enforced, but   $K \langle \nu_t^K, 1\rangle =K  \langle \mu_t^K, 1\rangle = N_ t^K$  is kept. 
    \item  In the construction of the coupling using algorithm \textnormal{(\hyperlink{a}{A})}, the random variables $(Y^k)_{k\geq 1}$ are chosen as before while,  for any $K$ and $N$, the random vectors  $(X_0^1,\dots, X_0 ^N) $ are  chosen  on the event $\{N_0^K=N\}$,  suitably coupled with $(Y_0^1,\dots, Y_0 ^N)$.  This results in an extra term of the form $\EE(\langle\eta_0^K,d_2\rangle)$ on the r.h.s. of the bounds in the  statement and  proof of Lemma \ref{lem:systemsdistance} which in turn translates into an additional term $C_T \EE(\langle\eta_0^K,d_2\rangle)^{1/2} $ on the r.h.s.  of the bound  in  Theorem \ref{thm:rateofconvergence}. 
    \item   In order to minimize the value of this additional term, the coupling of the variables $(X_0^1,\dots, X_0 ^N) $  and $(Y_0^1,\dots, Y_0 ^N)$ must be chosen on each event $\{N_0^K=N\}$ so as to realize the squared $2$-Wasserstein distance between the laws of  $(X_0^1,\dots, X_0 ^N) $  and $\bar{\mu}_0^{\otimes N}$ in $(\RR^d)^N$.  Denoting
      \[\widetilde{W}_2^2 ({\mathcal{L}} (X_0^1,\dots, X_0 ^N), \bar{\mu}_0^{\otimes N})  =\frac{1}{N} W_2^2 ({\mathcal{L}} (X_0^1,\dots, X_0 ^N), \bar{\mu}_0^{\otimes N}), \]
    the normalized squared $2$-Wasserstein distance, the additional term $   \EE(\langle\eta_0^K,d_2\rangle)^{1/2} $   then writes  
    \[
      \EE \biggl( \frac{N_0^K}{K} \widetilde{W}_2^2 \Bigl({\mathcal{L}} \bigl(X_0^1,\dots, X_0^{N_0^K} \bigm\vert  N_0^K\bigr), \bar{\mu}_0^{\otimes N_0^K} \Bigr) \biggr)^{1/2}.
    \]
\end{itemize}
\end{rmk}

 The ideas and techniques developed in this work can in principle also be extended to more general systems of interacting branching populations, including the general setting of  \cite{FM2015}. Nevertheless, this requires to deal  with significant additional technicalities,  and we have chosen to focus here on the basic ideas. The following possible  generalizations  are left for future work: 

\begin{itemize} 
  \item  The case of populations with spatially  or density depending birth or death events, as in the more general setting studied in \cite{FM2015},  seems feasible but presents one major additional difficulty, namely  that the jump times are correlated with the spatial dynamics.  The main consequence of this is that, in any  coupling with some auxiliary system of conditionally independent  (or less dependent) particles, the jump times cannot be expected to happen simultaneously. However, under the condition of spatial Lipschitz continuity of the reproduction rate and the competition kernel, it should be possible to  keep at least some subsystems effectively coupled on finite time intervals, while controlling explicitly  the discrepancy between jump times  in the two systems, in terms of the distance of the empirical measures of the systems themselves, in such a way that the discrepancies asymptotically vanish as the population size goes to infinity.  
  \item A further desirable generalization regards the case of branching events more general than binary ones.  The natural extension of the argument used here would consist in coupling all the offspring of a  branching particle in the original system,  with a set of equally many independent new  particles given birth at the same time in the auxiliary system. However it is not clear how to make compatible the use of optimal transport plans  to couple the branching particle  and the positions of the new particles in the auxiliary system,  with the independence requirement in the auxiliary system. A possible way of coping with  this problem could be to make a two-steps coupling  construction:  first, between the branching  particle in the original system and the positions of new particles in the auxiliary system (which would define an exchangeable  random vector of particles in any case) and, in a second step,  coupling those positions with independent particles with the required law. 
\end{itemize}

\appendix

\setcounter{secnumdepth}{0}

\hypertarget{appendix}{}
\section{Appendix}

\begin{proof}[Proof of Lemma \ref{lem:normwasserstein}]
  Since  $ \Vert \bar{\nu}\Vert_ {{\BL}^*} = \langle\bar{\nu},1\rangle=1$, we have 
  \begin{align*}
    \Vert\mu-\nu\Vert_ {{\BL}^*}=&\ \left\Vert \langle\mu,1\rangle\left(\bar{\mu}-\bar{\nu}\right)+\bar{\nu}\left(\langle\mu,1\rangle-\langle\nu,1\rangle\right)\right\Vert_ {{\BL}^*}\\
    \leq &\  \langle\mu,1\rangle \Vert\bar{\mu}-\bar{\nu}\Vert_ {{\BL}^*}+\big\vert\langle\mu,1\rangle-\langle\nu,1\rangle\big\vert. 
  \end{align*}
  Now, for any $\mu, \nu  \in {\mathcal{P}}(\RR^d)$, $\Vert\mu-\nu\Vert_ {{\BL}^*}=\sup_{\Vert\varphi\Vert_\BL\leq1}\left\vert\int_{\mathbb{R}^d\times\mathbb{R}^d}(\varphi(x)-\varphi(y))\,\pi(\dd x,\dd y)\right\vert $   for all coupling $\pi\in  {\mathcal{P}}(\RR^{2d}) $ of $\mu$ and $\nu$. Using the fact that   $\vert\varphi(x)-\varphi(y)\vert\leq\vert x-y\vert\wedge2$ when $\Vert\varphi\Vert_{\BL}\leq1$ and taking infimum over all $\pi\in\Pi(\mu,\nu)$ we conclude that $\Vert\mu-\nu\Vert_ {{\BL}^*}\leq \inf_{\pi\in\Pi(\mu,\nu)}\int \vert x-y\vert\wedge2\,\pi(\dd x,\dd y)\leq W_1(\mu,\nu).$
\end{proof}

\begin{proof}[Proof of Lemma \ref{lem:boundNW}]
  Write  $\alpha = 1/2 $ when  $d<4$ or  $\alpha = {2}/{d} $ when  $d>4$. Thanks to Theorem \ref{thm:FournierGuillin}, for some $C_{d,q}>0$, 
  \begin{equation*}
  \begin{split}
    \EE\Big(\frac{N }{K}W_2^2\bigl(\bar{\nu}^K,\bar{\mu}\bigr)\Big)= {}& \EE\Big(\frac{N}{K}\EE\Bigl(W_2^2\bigl(\bar{\nu}^K,\bar{\mu}\bigr)\Bigm| N \Bigr)\Big) \\
    \leq{}& C_{d,q} M_q^\frac{2}{q}(\bar{\mu})\,  \EE\Big(\frac{N}{K} \Big(N^{-\alpha }+N^{-  \frac{q-2}{q} }\Big)\Big) \\
    = {}& C_{d,q} M_q^\frac{2}{q}(\bar{\mu}) \Big( K^{-\alpha} \,  \EE \Big( \Big( \frac{N}{K}\Big)^{1-\alpha}\Big) + K^{- \frac{q-2}{q}} \EE  \Big( \Big(\frac{N}{K}\Big)^{ \frac{2}{q}}\Big)\Big) \\
    \leq{}& C_{d,q} M_q^\frac{2}{q}(\bar{\mu}) \Big( K^{-\alpha}  \EE  \Big( \frac{N}{K}\Big)^{1-\alpha}  + K^{- \frac{q-2}{q}} \EE   \Big(\frac{N}{K}\Big)^{ \frac{2}{q}}\Big),
  \end{split}
  \end{equation*}
  using Jensen's inequality in the last line.  This implies the result for $d\neq 4$.  When  $d=4$ we get the bounds
  \begin{equation*}
  \begin{split}
    \EE\Big(\frac{N }{K}W_2^2\bigl(\bar{\nu}^K,\bar{\mu}\bigr)\Big) \leq{}& C_{d,q} M_q^\frac{2}{q}(\bar{\mu}) \Big(  K^{-\frac{1}{2}} \, \EE\biggl(\Bigl(\frac{N}{K}\Bigr)^\frac{1}{2} \log(1+N) \biggr)     + K^{- \frac{q-2}{q}} \EE   \Big(\frac{N}{K}\Big)^{ \frac{2}{q}}\Big) \\
    \leq{}& C_{d,q}  M_q^\frac{2}{q}(\bar{\mu}) \Big(  K^{-\frac{1}{2}}  \, \EE \Bigl(\frac{N}{K}\Bigr)^\frac{1}{2} \EE \Bigl(\log^2(e+N) \Bigr)^\frac{1}{2}    + K^{- \frac{q-2}{q}} \EE   \Big(\frac{N}{K}\Big)^{ \frac{2}{q}}\Big).  \\
  \end{split}
  \end{equation*}
  The function  $x \in [e,\infty) \mapsto \log^2(x)$ being concave, we can extend it  linearly on  $(-\infty, e)$ to get a $C^1$ concave  function on $\RR$. Jensen's inequality then yields
  \[  \EE \Bigl(\log^2(e+N) \Bigr)^\frac{1}{2}    \leq  \log\Bigl(e+ K\EE\Bigl(\frac{N}{K}\Bigr)\Bigr)      \leq    1+\log(1+K)  + \log\Bigl(1\vee \EE\Bigl(\frac{N}{K}\Bigr)\Bigr). \]
  Using this, we finally obtain that
  \begin{equation*}
  \begin{split}
    \EE\Big(\frac{N }{K}W_2^2\bigl(\bar{\nu}^K,\bar{\mu}\bigr)\Big) &\leq C_{d,q}  M_q^\frac{2}{q}(\bar{\mu}) \Big( K^{-\frac{1}{2}}\, \EE \Bigl(\frac{N}{K}\Bigr)^\frac{1}{2}  + K^{-\frac{1}{2}}\log(1+K)  \, \EE \Bigl(\frac{N}{K}\Bigr)^\frac{1}{2} \\ 
    &\qquad + K^{-\frac{1}{2}}\, \EE \Bigl(\frac{N}{K}\Bigr)^\frac{1}{2}\log\Bigl(1\vee \EE\Bigl(\frac{N}{K}\Bigr)\Bigr) +  K^{- \frac{q-2}{q}} \EE   \Big(\frac{N}{K}\Big)^{ \frac{2}{q}}\Big),
  \end{split}
  \end{equation*}
  and the case  $d=4$ follows since $K^{-\frac{1}{2}}\leq K^{-\frac{1}{2}}\log(1+K)$ for $K\in \NN\setminus \{0\}$. 
\end{proof}

\begin{proof}[Proof of Lemma \ref{lem:weakconv_mu0}]
  Since condition (\hyperlink{h1}{H.1}) assumed in a) is a particular case of the assumptions in b),  it is enough to prove b) to  get both parts.   Taking $\mu=\mu_0$ and $\nu=\mu_0^K$ in  Lemma \ref{lem:normwasserstein},  we get 
  \begin{equation}\label{eq:limsupmu0K}
    \limsup_K  \PP(\Vert  \mu_0- \mu_0^K \Vert_{{\BL}^*} \geq \varepsilon) \leq  \limsup_K  \PP(\Vert  \bar{\mu}_0- \bar{\mu}_0^K \Vert_ {{\BL}^*} \geq \varepsilon/(2   \langle \mu_0, 1\rangle)  ),
  \end{equation}
  with $ \bar{\mu}_0^K = \frac{1}{N_0^K}\sum_{i=1}^{N_0^K} \delta_{X_0^i} $. On the other hand, for each $\delta >0$ and $M>0$,
  \begin{equation*}
  \begin{split}
    \PP(\Vert  \bar{\mu}_0- \bar{\mu}_0^K \Vert_ {{\BL}^*} \geq  \delta)  & \leq \sum_{N\geq M} \EE\left[  \PP (  \Vert  \bar{\mu}_0- \bar{\mu}_0^K \Vert_ {{\BL}^*} \geq  \delta \vert N_0^K=N) \1_{N_0^K=N} \right] + \PP( N_0^K< M)  \\
    &\leq \sup_{N\geq M}    \PP \left(  \bigg\Vert  \bar{\mu}_0-  \frac{1}{N}\sum_{i=1}^N \delta_{Y^{i,N}}  \bigg\Vert_ {{\BL}^*} \geq  \delta\right) + \PP(  \langle \mu_0^K, 1\rangle   < M/K ).   \\
  \end{split}
  \end{equation*}
  Since $ \langle \mu_0^K, 1\rangle$ converges weakly to a non null constant, the last term goes to $0$ when $K\to \infty$. On the other hand, it is well known that the  assumed $\bar{\mu}_0$-chaoticity is equivalent to the convergence in distribution of the random probability $ \frac{1}{N}\sum_{i=1}^N \delta_{Y^{i,N}} $ to $\bar{\mu}_0$ as $N\to \infty$.  If follows that  $\limsup_{K\to \infty}  \PP(\Vert  \bar{\mu}_0- \bar{\mu}_0^K \Vert \geq  \delta) =0$  which entails the  claim in view of \eqref{eq:limsupmu0K}. 
  
  c) The r.v.  $N_0^K=K  \langle \mu_0^K, 1\rangle$  is Poisson  of parameter $K \langle \nu_0,1\rangle $ and equals in law the sum $\sum_{i=1}^{ K }  N^i $ of independent  Poisson r.v.   $(N^i)_{i=1}^{ K  } $ of parameter $\langle \nu_0,1\rangle $.  By the law of large numbers, $ \langle \mu_0^K,1\rangle  = N_0^K /K $ converges in law to the constant  $ \langle \nu_0,1\rangle $. It is immediate from  basic properties of Poisson point measures that  the $N_0^K$  atoms of $  \mu_0^K$ are  i.i.d. of law $\bar{\nu}_0$ given   $\langle \mu_0^K, 1\rangle$, and we necessarily  have $\mu_0= \langle \nu_0,1\rangle \bar{\nu}_0 =\nu_0$.   Last,  $N_0^K$ being Poisson of parameter $K \langle \mu_0,1\rangle $, we have  $I_4^4(K) = K^{-3} \left( \langle \mu_0,1\rangle  + 3K  \langle \mu_0,1\rangle ^2\right) \leq C K^{-2}.$  \linebreak
\end{proof}

\paragraph*{Acknowledgments}
J.F. acknowledges partial support from Fondecyt Grant 1201948 and BASAL Fund  AFB170001 Center for Mathematical Modeling from ANID-Chile.  F. M.-H. acknowledges financial support received under the Doctoral Fellowship ANID-PFCHA/Doctorado Nacional/\allowbreak2017-21171912.  Both authors also thank support from Millennium Nucleus  Stochastic Models of Complex and Disordered Systems from Millennium Scientific Initiative. 

\bibliographystyle{plain}
\bibliography{QuantitativeLargePopulation}

\end{document}